\renewcommand{\cite}{\citet}
\definecolor{webgreen}{rgb}{0,.5,0}
\definecolor{webbrown}{rgb}{.8,0,0}
\definecolor{emphcolor}{rgb}{0.95,0.95,0.95}
\renewcommand{\theequation}{\thesection.\arabic{equation}}
\numberwithin{equation}{section} \linespread{1.2}
\newtheorem {prop}{Proposition}[section]
\newtheorem {lemm}{Lemma}[section]
\newtheorem {cor}{Corollary}[section]
\theoremstyle{remark}
\newtheorem {rem}{Remark}[section]
\newcommand{\R}{\mathbb R}
\newcommand{\N}{\mathbb N}
\newcommand{\PP}{\mathbb P}
\renewcommand{\P}{\mathbb P}
\newcommand{\A}{\mathcal A}
\newcommand{\E}{\mathbb E}
\newcommand{\M}{\mathcal M}
\newcommand{\tM}{\tilde{\mathcal M}}
\newcommand{\tU}{\tilde{U}_0}
\newcommand{\U}{\mathcal U}
\newcommand{\F}{\mathbb F}
\renewcommand{\bar}{\overline}
\newcommand{\s}{\mathcal{S}}
\renewcommand{\emptyset}{\varnothing}
\newcommand{\Fc}{\mathcal F}
\newcommand{\vP}{\vec{\Pi} }
\newcommand{\vp}{\vec{\pi} }
\newcommand{\vx}{\vec{x} }
\newcommand{\eps}{\varepsilon}
\newcommand{\e}{\mathrm{e}}
\newcommand{\G}{\mathcal{G}}
\renewcommand{\S}{\mathcal{S}}
\title[Sequential Tracking of a Hidden Markov Chain ]{Sequential Tracking of a Hidden Markov Chain Using Point Process Observations}
\author{Erhan Bayraktar }
\address[E. Bayraktar]{Department of
  Mathematics, University of Michigan, Ann Arbor, MI 48109}
\email{erhan@umich.edu}
\thanks{E. Bayraktar is supported in part by the National Science Foundation, under grant DMS-0604491. }
\author{Michael Ludkovski}
\address[M.\ Ludkovski]{Department of Mathematics
  University of Michigan, Ann Arbor, MI 48109}
\email{mludkov@umich.edu}
\subjclass[2000]{Primary 62L10; Secondary 62L15, 62C10, 60G40}
\keywords{Markov modulated Poisson processes, optimal switching}
\begin{document}
\begin{abstract}
We study finite horizon optimal switching problems for hidden Markov chain models with point
process observations. The controller possesses a finite range of strategies and attempts to
track the state of the unobserved state variable using Bayesian updates over the discrete
observations. Such a model has applications in economic policy making, staffing under variable
demand levels and generalized Poisson disorder problems. We show regularity of the value
function and
 explicitly characterize an optimal strategy.  We also provide an efficient numerical scheme and illustrate our results with
several computational examples.
\end{abstract}

\maketitle
\section{Introduction}\label{sec:intro}
An economic agent (henceforth the controller) observes a compound Poisson process $X$ with
arrival rate $\lambda$, and mark/jump distribution $\nu$. The local characteristics $(\lambda,
\nu)$ of $X$ are determined by the current state of an \emph{unobservable} Markov jump process
$M$ with finite state space $E \triangleq \{ 1, \ldots, m\}$. More precisely, the
characteristics are $(\lambda_i , \nu_i)$ whenever $M$ is at state $i$, for $i \in E $.

The objective of the controller is to \emph{track} the state of $M$ given the information in
$X$. To do so, the controller possesses a range of policies $a$ in the finite alphabet $
\mathcal{A} \triangleq \{ 1, \ldots, A \}$. The policies are sequentially adopted starting from
time 0 and until some fixed horizon $T< \infty$. The infinite horizon case $T=+\infty$ is
treated in Section \ref{sec:infinite-horizon}. The selected policy $a$ leads to running costs
(benefits) at instantaneous rate
\begin{align*}
\sum_{i \in E }  c_{i}(a) 1_{ \{ M_{t} = i  \} } dt.
\end{align*}

%Because $K$'s are strictly positive, policy changes are expensive and the
%controller will only carry out a finite number of switches before $T$ (since potential rewards
%are bounded).
%Also, we will assume that
%$K_i(a,b)+K_i(b,c) \geq K_i(a,c)$, for any $a,b,c \in \A$, i.e. direct transitions are less costly.

The controller's overall strategy consists of a double sequence $(\tau_k, \xi_k), k
=0,1,2,\ldots$, with $\xi_k\in \mathcal{A}$ representing the sequence of chosen policies and $0
\triangleq \tau_0 < \tau_1 < \dots \le T$ representing the times of policy changes (from
now on termed \emph{switching times}). %We also assume $\xi_k \neq \xi_{k+1}$, $k \in \mathbb{N}_+$.
We denote the entire strategy by the right-continuous piecewise constant process $\xi \colon
[0,T] \times \Omega \to \mathcal{A}$, with $\xi_t = \xi_k$ if $ \tau_{k} \leqslant t <
\tau_{k+1}$ or
\begin{align}\label{eq:xi}
\xi_t = \sum_{\tau_{k+1} \leq  T} \xi_k \cdot 1_{[\tau_k, \tau_{k+1})}(t).
\end{align}

Beyond running benefits, the controller also faces switching costs in changing her policy which
lead to inertia and hysteresis. If at time $t$, the controller changes her policy from $a$ to
$b$ and $M_t = i$ then an immediate cost $K_i(a,b) $ is incurred. The overall objective of the
controller is to maximize the total present value of all tracking benefits minus the switching
costs which is given by
\begin{align*}
\int_0^{T} \e^{- \rho t }  \left( \sum_{i \in E }  c_{i}(\xi_t) 1_{ \{ M_{t} = i  \} } \right)
\,dt - \sum_k \e^{-\rho \tau_k} \left(\sum_{i \in E} K_i(\xi_{\tau_k-}, \xi_{\tau_k})\cdot 1_{
\{ M_{\tau_k} = i \} } \right),
\end{align*}
where  $\rho \ge 0$ is the discount factor.

Since $M$ is unobserved, the controller must carry out a filtering procedure. We postulate that
she collects information about $M$ via a Bayesian framework. Let $\vp = (\pi_1, \ldots, \pi_m)
\triangleq \left( \P\{M_0 = 1\}, \ldots, \P\{M_0 = m\} \right)$ be the initial (prior) beliefs
of the controller about $M$ and $\P^{\vp}$ the corresponding conditional probability law. The
controller starts with beliefs $\pi$, observes $X$, updates her beliefs and adjusts her policy
accordingly. Because only $X$ is observable, the strategy $\xi$ should be determined by the
information generated by $X$, namely each $\tau_k$ must be a stopping time of the filtration
$\mathcal{F}^X$ of $X$. Similarly, the value of each $\xi_k$ is determined by the information
$\mathcal{F}^X_{\tau_k}$ revealed by $X$ until $\tau_k$. These notions and the precise updating
mechanism will be formalized in Section \ref{sec:vP}. We denote by $\U(T)$ the set of all such
\emph{admissible strategies} on a time interval $[0,T]$. Since strategies with infinitely many
switches would have infinite costs, we exclude them from $\U(T)$.

Starting with initial policy $a \in \A$ and beliefs $\vp$, the performance of a given policy
$\xi \in \U(T)$ is
\begin{equation}
J^{\xi}(T,\vp,a) \triangleq \E^{\vp,a} \left[  \int_0^{T} \e^{- \rho t }   \left( \sum_{i \in
E} c_i(\xi_t) 1_{ \{  M_{t} = i \} } \right) dt - \sum_{k} \e^{- \rho \tau_k} \!\left(\sum_{i
\in E} K_i(\xi_{k-1}, \xi_{k}) \cdot 1_{ \{ M_{\tau_k} = i \} } \right) \right].
\end{equation}
The first argument in $J^\xi$ is the remaining time to maturity. The optimization problem is to
compute
\begin{align}
\label{def:U} U (T, \vp, a) \triangleq \sup_{ \xi \in \U(T)} J^{\xi}(T,\vp,a),
\end{align}
and, if it exists, find an admissible strategy $\xi^*$ attaining this value. In this paper we
solve \eqref{def:U}, including giving a full characterization of an optimal control $\xi^*$ and
a deterministic numerical method for computing $U$ to arbitrary level of precision. The
solution will proceed in two steps: an initial filtering step and a second optimization step.
The inference step is studied in Section \ref{sec:probStat}, where we convert the optimal
control problem with partial information \eqref{def:U}  into an equivalent \emph{fully
observed} problem in terms of the a posteriori probability process $\vP$.  The process $\vP$
summarizes the dynamic updating of controller's beliefs about the Markov chain $M$ given her
point process observations. The explicit dynamics of $\vP$ are derived in Proposition
\ref{cor:pdp}, so that the filtering step is completely solved. The main part of the paper then
analyzes the resulting optimal switching problem \eqref{def:V} in Sections \ref{sec:sequential}
and \ref{sec:opt-strategy}.

To our knowledge, the finite horizon partially observed switching control problem (which might
be viewed as an impulse control problem in terms of $\xi$) defined in \eqref{def:U}, has not
been studied before. However, it is closely related to optimal stopping problems with partially
observable Cox processes that have been extensively looked at starting with the Poisson
Disorder problems, see e.g.\ \cite{PeskirShiryaev, MR2003i:60071,BD03,bdk05,BS06}. In
particular, \cite{BS06} solved the Poisson disorder problem when the change time has phase type
prior distribution by showing that it is equivalent to an optimal stopping problem for a hidden
Markov process (which has several transient states and one absorbing state) that is indirectly
observed through a point process. Later \cite{LS07} solved a similar optimal stopping problem
in which all the states of the hidden Markov chain are recurrent. Both of these works can be
viewed as a special case of \eqref{def:U}, see Remark \ref{rem:opt-stopping-as-special-case}.
Our model can also be viewed as the continuous-time counterpart of discrete-time sequential
$M$-ary detection in hidden Markov models, a topic extensively studied in sequential analysis,
see e.g.\ \cite{TartakovskyEtal06,Aggoun03}.

Filtering problems with point process observations is a well-studied area; let us mention the
work of \cite{Arjas92}, \cite{CeciGerardi98} and the reference volume \cite{ElliottBook}. In
our model we use the previous results obtained in \cite{BS06,LS07} to derive an explicit
filter; this allows us then to focus on the separated fully-observed optimal switching problem
using the new hyper-state. Let us also mention the recent paper of \cite{ChopinVarini07} who
study a simulation-based method for filtering in a related model, but where an explicit filter
is unavailable and must be numerically approximated.

The techniques that we use to solve the optimal switching/impulse control problem are different
from the ones used in the continuous-time optimal control problems mentioned above. The main
tool in solving the optimal stopping problems (in the multi-dimensional case, the tools in the
one dimensional case are not restricted to the one described here) is the approximating
sequence that is constructed by restricting the time horizon to be less than the time of the
$n$-th observation/jump of the observed point process. This sequence converges to the value
function uniformly and exponentially fast. However, in the impulse control problem, the
corresponding approximating sequence is constructed by restricting the sum of the number of
jumps \emph{and} interventions to be less than $n$. This sequence converges to the value
function, however the uniform convergence in both $T$ and $\vp$ is not identifiable using the
same techniques.

As in \cite{CostaDavis89} and \cite{CostaRaymundo00} (also see \cite{MazziottoEtal88} for
general theory of impulse control of partially observed stochastic systems), we first
characterize the value function $U$ as the smallest fixed point of two functional operators and
obtain the aforementioned approximating sequence. Using one of these characterization results
and the path properties of the a posteriori probability process we obtain one of our main
contributions: the regularity of the value function $U$. We show that $U$ is convex in $\vp$,
Lipschitz in the same variable on the closure of its domain, and Lipschitz in the $T$ variable
uniformly in $\vp$. Our regularity analysis leads to the proof of the continuity of $U$ in both
$T$ and $\vp$ which in turn lets us explicitly describe an optimal strategy.

The other characterization of $U$ as a fixed point of the first jump operator is used to
numerically implement the optimal solution and find the value function. In general, very little
is known about numerics for continuous-time control of general hidden Markov models, and this
implementation is another one of our contributions. We combine the explicit filtering equations
together with special properties of piecewise deterministic processes \citep{davis93} and the
structure of general optimal switching problems to give a complete computational scheme. Our
method relies only on deterministic optimization sub-problems and lets us avoid having to deal
with first order quasi-variational inequalities with integral terms that appear in related
stochastic control formulations (see remark \ref{rem:qvi} below). We illustrate our approach
with several examples on a finite/infinite horizon and a hidden Markov chain with two or three
states.

Our framework has wide-ranging applications in operations research, management science and
applied probability. Specific cases are discussed in the next subsection. As these examples
demonstrate, our approach leads to sensible policy advice in many scenarios. Most of the
relevant applied literature treats discrete-time stationary problems, and our model can be seen
as a finite-horizon, continuous-time generalization of these approaches.

The rest of the paper is organized as follows: In Section~\ref{sec:apps} we propose some
applications of our modeling framework.
 In Section \ref{sec:probStat} we describe an equivalent fully observed problem in terms of the a posteriori probability process $\vP$. We also analyze the dynamics of $\vP$. In Section \ref{sec:sequential} we show that $U$ satisfies two different dynamic
programming equations. The results of Section~\ref{sec:sequential} along with the path
description of $\vP$ allows us to study the regularity properties of $U$ and describe an
optimal strategy in Section~\ref{sec:opt-strategy}. Our model can be extended beyond
\eqref{def:U}, in particular to cover the case of infinite horizon and the case in which the
costs are incurred at arrival times. The extensions are described in  Section~\ref{sec:extend}.
Extensive numerical analysis of several illustrative examples is carried out in
Section~\ref{sec:examples}.

\subsection{Applications}\label{sec:apps} % Combine with numerical examples
In this section we discuss case studies of our model and the relevant applied literature.

% As a specific example...

\subsubsection{Cyclical Economic Policy Making}
The economic business cycle is a basis of many policy making decisions. For instance, the
country's central bank attempts to match its monetary policy, so as to have low interest rates
in periods of economic recession and high interest rates when the economy overheats. Similarly,
individual firms will time their expenditures to coincide with boom times and will cut back on
capital spending in unfavorable economy states. Finally, investors hope to invest in the bull
market and stay on the sidelines during the bear market. In all these cases, the precise
current economy state is never known. Instead, the agents collect information via economic
events, surveys and news, and act based on their dynamic beliefs about the environment.
Typically, such news consist of discrete events (e.g.\ earnings pre-announcements,
geo-political news, economic polls) which cause \emph{instantaneous} jumps in agents' beliefs.
Thus, it is natural to model the respective information structure by observations of a
modulated compound Poisson process. Accordingly, let $M$ represent the current state of the
economy and let the observation $X$ correspond to economic news. Inability to correctly
identify $M$ will lead to (opportunity) costs $c_{M_s}(\xi_s)$. Hence, one may take $\A = E$
and $c_a(a) =0, c_a(b) < 0$. The strategy $\xi$ represents the set of possible actions of the
agent. The switching costs of the form $K(\xi_s, \xi_{s-}) > 0$ correspond to the costly
influence of the Federal Reserve changing its interest rate policy, or to the transaction costs
incurred by the investor who gets in/out of the market. Depending on the particular setting,
one may study this problem both in finite- and infinite-horizon setting, and with or without
discounting. For instance, a firm planning its capital budgeting expenses might have a fixed
horizon of one year, while a central bank has infinite horizon but discounts future costs. A
corresponding numerical example is presented in Section \ref{sec:fed-target}.

%
%A related classical model concerns optimal replacement policies in deteriorating Markov
%systems, see \cite{Ross71,MakisJiang,AggounBenkherouf02}.

\subsubsection{Matching Regime-Switching Demand Levels}
Many customer-oriented businesses experience stochastically fluctuating demand. Thus, internet
servers face heavy/light traffic; manufacturing managers observe cyclical demand levels;
customer service centers have varying frequencies of calls. Such systems can be modeled in
terms of a compound Poisson request process $X$ modulated by the partially known system state
$M$. Here, $X$ serves the \emph{dual} role of representing the actual demands and conveying
information about $M$. The objective of the agent is to dynamically choose her strategy $\xi$,
so as to track current demand level. For instance, an internet server receives asynchronous
requests $Y_\ell$, $\ell = 1,2,\ldots$ (corresponding to jumps of $X$) that take
$c(Y_\ell,\xi_t)$ time units to fulfill. The rate of requests and their complexity distribution
depend on $M$. In turn, the server manager can control how much processing power is devoted to
the server: more processors cut down individual service times but lead to higher fixed
overhead. Such a model effectively corresponds to a controlled $M(\lambda)/G/\infty$-queue,
where the arrival rate $\lambda$ is $M$-modulated, and where the distribution of service times
depends both on $M$ and the control $\xi$. A related computational example concerning a
customer call center is treated in Section \ref{sec:customer-call}.

A concrete example that has been recently studied in the literature is the insurance premium
problem. Insurance companies handle claims in exchange for policy premiums. A standard model
asserts that claims $Y_1, Y_2, \ldots$ form a compound (time-inhomogeneous) Poisson process
$X$. Suppose that the rate of claims is driven by some state variable $M$ that measures the
current background risk (e.g.\ climate, health epidemics, etc.), with the latter being
unobserved directly. In \cite{Aggoun03}, such a model was studied (in discrete time) from the
inference point of view, deriving the optimal filter for the insurance environment $M$ given
the claim process. Assume now that the company can control its continuous premium rate
$c^2(\xi_t)$, as well as its deductible level $c^1(\xi_t)$. High deductibles require lowering
the premium rate, and are therefore only optimal in high-risk environments. Furthermore,
changes to policy provisions (which has a finite expiration date $T$) are costly and should be
undertaken infrequently. The overall objective is thus,
$$ \sup_{\xi \in \U(T)} \E^{\vp,a} \left[
-\sum_{j=1}^{N(T)} \e^{-\rho \sigma_j} (Y_j-c^1(\xi_{\sigma_j}))_+ + \int_0^T c^2(\xi_t) \, dt
- \sum_k \e^{- \rho \tau_k} \left(\sum_{i \in E} K_i(\xi_{k-1}, \xi_{k}) \cdot 1_{ \{
M_{\tau_k} = i \} } \right)   \right],
$$
where $N$ is the counting process for the number of claims. The resulting cost structure, which
is a variant of \eqref{def:U}, is described in Section \ref{sec:discrete-costs}. %A related case
%of inference for a partially observed inventory process is analyzed in
%\cite{AggounBenmerzouga}.

\subsubsection{Security Monitoring}
Classical models of security surveillance (radar, video cameras, communication network monitor)
involve an unobserved system state $M$ representing current security (e.g.\ $E = \{ 0, 1\}$,
where $0$ corresponds to a `normal' state and $1$ represents a security breach) and a signal
$X$. The signal $X$ records discrete events, namely artifacts in the surveyed space (radar
alarms, camera movement, etc.). Benign artifacts are possible, but the intensity $\lambda$ of
$X$ increases when $M_t = 1$. If the signal can be decomposed into further sub-types, then $X$
becomes a marked point process with marks $(Y_\ell)$. The goal of the monitor is to correctly
identify and respond to security breaches, while minimizing false alarms and untreated security
violations. Classical formulations \citep{TartakovskyEtal06,PeskirShiryaev} only analyze
optimality of the first detection. However, in most practical problems the detection is ongoing
and discrete announcement costs require studying the entire (infinite) sequence of detection
decisions. Accordingly, our optimal switching framework of \eqref{def:U} is more appropriate.

As a simplest case, the monitor can either declare the system to be sound $\xi_t=1$, or declare
a state of alarm $\xi_t=2$. This produces $M$-dependent penalty costs at rate $\sum_{j\in E}
c_j(\xi_t) 1_{\{ M_t = j\}} dt$; also changing the monitor state is costly and leads to costs
$K$. A typical security system is run on an infinite loop and one wishes to minimize total
discounted costs, where the discounting parameter $\rho$ models the effective time-horizon of
the controller (i.e.\ the trade-off between the myopically optimal announcement and long-run
costs). Such an example is presented in Section \ref{sec:security-detection}.

\subsubsection{Sequential Poisson Disorder Problems}
Our model can also serve as a generalization of Poisson disorder problems,
\citep{bdk05,MR2003i:60071}. Consider a simple Poisson process $X$ whose intensity $\lambda$
sequentially alternates between $\lambda_0$ and $\lambda_1$. The goal of the observer is to
correctly identify the current intensity; doing so produces a running reward at rate
$c_0(\xi_t)$ per unit time, otherwise a cost at rate $c_1(\xi_t)$ is assessed, where $\xi$ is
the control process. Whenever the observer changes her announcement, a fixed cost $K$ is
charged in order to make sure that the agent does not vacillate. Letting $M$, $M_t \in \{0,1\}$
denote the intensity state, and $\lambda = \lambda_{M_t}$ this  example yet again fits into the
framework of \eqref{def:U}. Obvious generalizations to multiple values of $\lambda$ and
multiple announcement options for the observer can be considered. Again, one may study the
classical infinite-horizon problem, or the harder time-inhomogeneous model on finite-horizon,
where the observer must also take into account time-decay costs.

% Put here the two-state example: finite horizon, no discounting

% average costs, compound Poisson

\section{Problem Statement}\label{sec:probStat}
In this section we rigorously define the problem statement and show that it is equivalent to a
fully observed impulse control problem using the conditional probability process $\vP$. We then
derive explicitly the dynamics of $\vP$. First, however we give a construction of the
probability measure $\P$ and the formal description of $X$.

\subsection{Observation Process}
Let $(\Omega, \mathcal{H}, \P_0)$ be a probability space hosting two independent elements: (i)
a continuous time Markov process $M$ taking values in a finite set $E$, and with infinitesimal
generator $Q=(q_{ij})_{i,j \in E }$, (ii) a compound Poisson process $X$ with intensity
$\lambda_1$ and jump size distribution $\nu_1$ on $\R^d$. Let $\mathbb{F}=\{\mathcal{F}^X_t\}$
be the natural filtration of $X$ enlarged by $\P_0$-null sets, and consider its initial
enlargement $\mathbb{G}=\{\G_t\}_{t \geq 0}$ with  $\mathcal{G}_t \triangleq
\sigma(\mathcal{F}^X_t , \sigma(\{M_t\}_{t \geq 0}))$ for all $t \geq 0$. The filtration
$\mathbb{G}$ summarizes the information flow of a genie that observes the entire path of $M$ at
time $t=0$.

Denote by $\sigma_0, \sigma_1 , \ldots$ the arrival times of the process $X$,
\begin{align*}
\sigma_\ell \triangleq \inf \{ t > \sigma_{\ell-1} : X_t \ne X_{t- }\}, \qquad \ell \ge 1
\qquad \text{with $\sigma_0 \equiv 0$.}
\end{align*}
and by $Y_1, Y_2 ,\ldots$ the $\R^d$-valued marks observed at these arrival times:
\begin{align*}
Y_\ell = X_{\sigma_\ell } - X_{\sigma_\ell- }, \qquad \ell \ge 1.
\end{align*}
Then in terms of the counting random measure
\begin{equation}\label{eq:p-crand}
p((0, t],A) \triangleq \sum_{\ell=1}^{\infty} 1_{\{\sigma_\ell \leq t\}}1_{\{Y_\ell \in A\}},
\end{equation}
where  $A$ is a Borel set in $\R^d$, we can write the observation process $X$ as
\begin{equation*}
X_t=X_0+\int_{(0,t] \times \mathbb{R}^d} y\, p(ds, dy).
\end{equation*}

Let us introduce the positive constants $\lambda_2, \ldots, \lambda_m$ and the distributions
$\nu_2, \ldots, \nu_m$. We also define the total measure $\nu \triangleq \nu_{1} + \ldots +
\nu_m $, and let $f_i(\cdot)$ be the density of $\nu_i$ with respect to $\nu$. Define
\[
R(t,y) \triangleq \frac{1}{\lambda_1 f_1(y)}\sum_{i \in E} 1_{\{M_t=i\}} \lambda_i f_{i}(y),
\qquad t \geq 0, \, y \in \R^d.
\]
and  denote the  $(\P_0,\mathbb{G})-$ (or $(\P_0,\F)$)-compensator of $p$ by
\begin{equation} p_0((0,t] \times A)= \lambda_1 t \int_{A} f_1(y)
\nu(dy), \qquad t \geq 0, \, A \in \mathcal{B}(\R^d).
\end{equation}
We will use $R(t,y)$ and $p_0$ to change the underlying probability measure to a new
probability measure $\P$ on $(\Omega, \mathcal{H})$ defined by
\begin{equation*}
\frac{d \P}{d \P_0} \bigg|_{\G_t}=Z_t,
\end{equation*}
where the stochastic exponential $Z$ given by
\begin{equation*}
Z_t \triangleq \exp\left\{\int_{(0,t] \times \R^d} \log (R(s,y)) \, p(ds,dy)-\int_{(0,t] \times
\R^d}[R(s,y)-1] \, p_0(ds,dy)\right\},
\end{equation*}
is a $(\P_0,\mathbb{G})$-martingale. Note that $\P$ and $\P_0$ coincide on $\G_0$ since
$Z_0=1$, therefore law of the Markov chain $M$ is the same under both probability measures.
Moreover, the $(\P,\mathbb{G})$-compensator of $p$ becomes
\begin{equation}
p_1((0,t],A)=  \sum_{i \in E} \int_{(0,t]}\! 1_{\{M_s=i\}}\lambda_i \int_A f_{i}(y)\nu(dy)\,ds.
\end{equation}
see e.g.\ \cite{JS}. The last statement is equivalent to saying that under this new
probability, $X$ has the form
\begin{align}
\label{def:X} X_t \triangleq X_0 + \int_0^t  \sum_{ i \in E} 1_{ \{ M_s =i \} } \, dX^{(i)}_s ,
\quad t\ge 0,
\end{align}
in which $X^{(1)} , \ldots, X^{(m)} $ are  independent compound Poisson processes with
intensities and jump size distributions  $ (\lambda_1, \nu_1), \ldots , (\lambda_m , \nu_m)$,
respectively. Such a process $X $ is called a Markov-modulated Poisson process
\citep{KarlinTaylor2}. By construction, the observation process $X$ has independent increments
conditioned on $M = \{ M_t\}_{t \ge 0}$. Thus, conditioned on $\{M_{\sigma_\ell} = i\}$, the
distribution of $Y_\ell$ is $\nu_i(\cdot)$ on $( \R^d, \mathcal{B}(\R^d)) $.

\subsection{Equivalent Fully Observed Problem.}
Let $D \triangleq \{ \vp \in [0,1]^m \colon \pi_1 + \ldots + \pi_m =1 \}$ be the space of prior
distributions of the Markov process $M$. Also, let $\s(s) = \{ \tau \colon
\F-\text{stopping time}, \tau \le s, \P-\text{a.s} \}$ denote the set of all $\F$-stopping
times smaller than or equal to $s$.

We define the $D$-valued \emph{conditional probability process} $\vP(t) \triangleq \left(
\Pi_1(t), \ldots , \Pi_m(t) \right)$ such that
\begin{align}
\label{def:Pi-i}
 \Pi_i(t) = \P \{  M_t =i | \Fc^X_t \}, \quad \text{for $i \in E$, and $t \ge 0$}.
\end{align}
Each component of $\vP$ gives the conditional probability that the current state of $M$ is $\{
i\}$ given the information generated by $X$ until the current time $t$. Using the process $\vP$
we now convert \eqref{def:U} into a standard optimal stopping problem.

\begin{prop}
\label{prop:new-V}
The performance of a given strategy $\xi \in \U(T)$ can be written as
\begin{align}
\label{def:V}
 J^{\xi}(T,\vp,a) =\E^{\vp,a} \left[   \int_0^{T} \e^{- \rho t }  \, C(\vP(t), \xi_t) \, dt
 - \sum_k \e^{- \rho \tau_k} K(\xi_{\tau_{k}-},\xi_{\tau_k},\vP({\tau_k}))   \right] ,
\end{align}
in terms of the functions
\begin{align}
\label{def:C} C(\vp, a) \triangleq  \sum_{i \in E }  c_i(a) \pi_i, \qquad\text{and}\qquad
K(a,b,\vp) \triangleq \sum_{i \in E} K_i(a,b) \pi_i.
\end{align}
\end{prop}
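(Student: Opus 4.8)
The plan is to derive \eqref{def:V} from the original definition of $J^\xi$ by conditioning on the observation filtration $\Fc^X$ and exploiting that an admissible strategy $\xi\in\U(T)$ is, by definition, adapted to $\Fc^X$: each $\tau_k$ is an $\Fc^X$-stopping time, the post- and pre-switch policies $\xi_{\tau_k}=\xi_k$ and $\xi_{\tau_k-}=\xi_{k-1}$ are $\Fc^X_{\tau_k}$-measurable, and $\xi_t$ is $\Fc^X_t$-measurable. In the expression defining $J^\xi$ the only factors not measurable with respect to the relevant observation $\sigma$-algebra are the indicators $1_{\{M_t=i\}}$, whose conditional expectations are exactly the coordinates of $\vP$ by \eqref{def:Pi-i}. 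Applying the tower property with respect to $\Fc^X$ and then collecting terms produces $C$ and $K$.

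For the running term I would first invoke Fubini's theorem to interchange $\E^{\vp,a}$ with the time integral $\int_0^T$; this is legitimate because the running costs $c_i(\cdot)$ take finitely many values (hence are bounded) and $T$ is finite, so the integrand is integrable on $[0,T]\times\Omega$. For each fixed $t$ I would condition on $\Fc^X_t$, pull the $\Fc^X_t$-measurable factor $c_i(\xi_t)$ out of the inner conditional expectation, and use $\E^{\vp,a}[1_{\{M_t=i\}}\mid\Fc^X_t]=\Pi_i(t)$. Summing over $i\in E$ replaces $\sum_i c_i(\xi_t)1_{\{M_t=i\}}$ by $\sum_i c_i(\xi_t)\Pi_i(t)=C(\vP(t),\xi_t)$, and reintegrating in $t$ yields the first term of \eqref{def:V}.

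The switching term is where the real work lies, since the conditioning must be carried out at the random times $\tau_k$ rather than at deterministic instants. I would write the $k$-th summand as $\e^{-\rho\tau_k}\sum_i K_i(\xi_{k-1},\xi_k)1_{\{M_{\tau_k}=i\}}$, note that $\e^{-\rho\tau_k}$ and each $K_i(\xi_{k-1},\xi_k)$ are $\Fc^X_{\tau_k}$-measurable, and condition on $\Fc^X_{\tau_k}$. The crux is then the identity
\[
\E^{\vp,a}\bigl[1_{\{M_{\tau_k}=i\}}\mid \Fc^X_{\tau_k}\bigr]=\Pi_i(\tau_k),
\]
i.e.\ that the filter evaluated at the stopping time still gives the conditional law of $M_{\tau_k}$. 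This is precisely the optional-sampling property of the optional projection: $\vP$ is a c\`adl\`ag version of the $\Fc^X$-optional projection of the bounded measurable process $(1_{\{M_t=i\}})_{i\in E}$, and such a projection satisfies $\E[Z_\tau\mid\Fc^X_\tau]={}^{o}Z_\tau$ for every bounded $\Fc^X$-stopping time $\tau$. The c\`adl\`ag regularity that legitimizes evaluating $\vP$ at $\tau_k$ pathwise is supplied by the explicit dynamics of Proposition \ref{cor:pdp}. Granting this identity, summing over $i$ turns the $k$-th summand into $\e^{-\rho\tau_k}K(\xi_{\tau_k-},\xi_{\tau_k},\vP(\tau_k))$.

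Finally I would interchange the (a.s.\ finite) sum over $k$ with the expectation. This is justified because strategies with infinitely many switches are excluded from $\U(T)$, so on each path only finitely many $\tau_k$ lie in $[0,T]$ and the total switching cost is bounded (the $K_i$ are bounded and $\e^{-\rho\tau_k}\le 1$); dominated convergence applied to the partial sums then delivers the second term of \eqref{def:V}. The main obstacle, as indicated, is the stopping-time conditioning step; the remainder is bounded-integrand bookkeeping.
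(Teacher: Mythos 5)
Your proof is correct and is essentially the paper's own approach: the paper does not write out a proof at all, declaring the separation principle ``immediate'' with a citation to Shiryaev (pp.\ 166--167), and what that citation packages is precisely your argument --- Fubini plus the tower property over $\Fc^X_t$ at fixed times for the running term, and the optional-projection identity $\E^{\vp,a}[1_{\{M_{\tau_k}=i\}}\mid \Fc^X_{\tau_k}]=\Pi_i(\tau_k)$ (legitimized by the c\`adl\`ag PDP version of $\vP$ from Proposition \ref{cor:pdp}) at the switching times. One cosmetic point: since each $K_i> k_0>0$, the interchange of the sum over $k$ with $\E^{\vp,a}$ follows already from monotone convergence for nonnegative terms (or by domination with the total switching cost, which is integrable by \eqref{eq:sum-expe-sw-cst}); the total cost is a.s.\ finite but not pathwise uniformly bounded, so ``bounded'' is not quite the right justification, though the step itself is valid.
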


Proposition~\ref{prop:new-V} above states that solving the problem in \eqref{def:U} is
equivalent to solving an impulse control problem with state variables $\vP$ and $\xi$. As a
result, the filtering and optimization steps are completely separated. In our context with
optimal switching control, the proof of this separation principle is immediate (see e.g. 
\cite[pp. 166-167]{Sh78}).
In more general problems with continuous controls, the result is more delicate, see
\cite{CeciGerardi98}.

We proceed to discuss the technical assumptions on $C$ and $K$. Note that by construction
$C(\cdot,a)$ and $K(a,b,\cdot)$ are linear. Moreover, $C$ is bounded since $E$ is finite, so
there is a constant denoted $c = \max_{i \in E} |c_i|$ that uniformly bounds possible rates of
profit, $|C(\vp,a)| \le c$. For the switching costs $K$ we assume that they satisfy the
triangle inequality
\begin{equation*}
K_i(a,b)+ K_i(b,c) \geq K_i(a,c), \quad\text{and}\quad K_i(a,b) > k_0 > 0 \quad \text{for}
\quad i \in E;\, a,b,c \in \A.
\end{equation*}
By the above assumptions on the switching costs and because possible rewards are uniformly
bounded, with probability one the controller only makes finitely many switches and she does not
make two switches at once. Without loss of generality we will also assume that every element in
$\xi \in \U(T)$ satisfies
\begin{equation}\label{eq:sum-expe-sw-cst}
\E^{\vp,a} \left[ \sum_k \e^{- \rho \tau_k} K(\xi_{\tau_{k}-},\xi_{\tau_k},\vP({\tau_k}))
\right]<\infty.
\end{equation}
Otherwise, the cost associated with a strategy $\xi$ would be $-\infty$ since
\begin{equation*}
\E^{\vp,a} \left[   \int_0^{T} \e^{- \rho t }  \, |C(\vP(t), \xi_t)| \, dt \right] \leq c \,T,
\end{equation*}
and taking no action would be better than applying $\xi$.

In the sequel we will also make use of the following auxiliary problems. First, let $U_0$ be
the value of no-action, i.e.,
\begin{equation}\label{def:U-0}
U_0(T,\vp,a)=\E^{\vp,a} \left[   \int_0^{T} \e^{- \rho t }  \, C(\vP_t, a) \, dt\right].
\end{equation}
Also in reference to (\ref{def:U}), we will consider the restricted problems
\begin{equation}\label{eq:aux}
U_n(T,\vp,a)\triangleq
 \sup_{ \xi \in \U_n(T)} J^{\xi}(T,\vp,a), \qquad n \ge 1,
\end{equation}
in which $\U_n(T)$ is a subset of $\U(T)$ which contains strategies with at most $n \ge 1$
interventions up to time $T$.

\subsection{Sample paths of $\vP$.}\label{sec:vP}
In this section we describe the filtering procedure of the controller, i.e.\ the evolution of
the conditional probability process $\vP$.  Proposition \ref{cor:pdp} explicitly shows that the
processes $\vP$ and $(\vP,\xi)$ are piecewise deterministic processes and hence have the strong
Markov property, \cite{davis93}. This description of paths of the conditional probability
process is also discussed in Proposition 2.1 in \cite{LS07} and Proposition 2.1 of
\cite{BS06}. We summarize the needed results below.

Let
\begin{align}\label{def:I-t}
I(t) \triangleq \int_0^t \sum_{i=1}^m \lambda_i 1_{\{M_s = i \}}\, ds,
\end{align}
so that the probability of no events for the next $u$ time units is $\P^{\vp} \{ \sigma_1
> u \} = \E^{\vp}[ \e^{-I(u)}]$. Then for $\sigma_\ell \le t \le t+ u < \sigma_{\ell+1}$, we have
\begin{align}
\label{eq:semi-group} \Pi_i (t+u) &= \frac{    \P^{\vp}  \{ \sigma_1 > u , M_u =i  \}  }
 {   \P^{\vp}  \{ \sigma_1 > u  \} }\Bigg|_{\vp = \vP(t)}.
\end{align}
On the other hand, upon an arrival of size $Y_\ell$, the conditional probability $\vP$
experiences a jump
\begin{align}\label{eq:jumps-of-vP}
\Pi_i (\sigma_{\ell+1} ) = \frac{ \lambda_i f_i(Y_{\ell+1}) \Pi_i (\sigma_{\ell+1} - ) } {
\sum_{j \in E} \lambda_j f_j(Y_{\ell+1})  \Pi_j (\sigma_{\ell+1} - ) }, \qquad \text{for }\ell
\in \N.
\end{align}

To simplify \eqref{eq:semi-group}, define $\vx (t, \vp) \equiv (x_1(t, \vp), \ldots , x_m(t,
\vp))$ via
\begin{align}
\label{eq:x-i} x_i(t, \vp) \triangleq \frac{    \P^{\vp}  \{ \sigma_1 > t , M_t =i  \}  }
 {   \P^{\vp}  \{ \sigma_1 > t  \} }
=\frac{  \E^{\vp} \left[ 1_{\{ M_{t} =i\}} \cdot \e^{ - I(t)}   \right]  } {\E^{ \vp }
\left[   \e^{ - I(t)}   \right] }
 , \qquad \text{for $i \in E$.}
\end{align}
It can be checked easily that the paths $t \mapsto \vx(t,\vp)$ have the semigroup property
$\vx(t+u , \vp ) = \vx (u , \vx(t,\vp) )$. In fact, $\vx$ can be described as a solution of
coupled first-order ordinary differential equations. To observe this fact first recall
\citep{DarrochMorris,Neuts,KarlinTaylor2} that the vector
\begin{align}\label{def:m}
\vec{m} (t , \vp ) \equiv ( m_1 (t , \vp ), \ldots , m_m (t , \vp ) ) \triangleq \Bigl( \,
\E^{\vp,a} \left[ 1_{\{ M_{t} =1\}} \cdot \e^{ - I(t)}   \right]  , \ldots ,
  \E^{\vp,a} \left[ 1_{\{ M_{t} =m\}} \cdot \e^{ - I(t)}   \right] \, \Bigr)
\end{align}
has the form %$\vec{m} (t , \vp ) = \vp \cdot \e^{ t( Q - \Lambda) }$,
\begin{align*}
\vec{m} (t , \vp ) = \vp \cdot \e^{ t( Q - \Lambda) },
\end{align*}
where $\Lambda$ is the $m \times m$ diagonal matrix with $\Lambda_{i,i}  = \lambda_i$. Thus,
the components of $\vec{m} (t , \vp )$ solve $ d m_i (t, \vp ) / dt  = - \lambda_i m_i (t,\vp )
+ \sum_{j \in E}  m_j (t,\vp) \cdot q_{j, i}$ and together with the chain rule and
(\ref{eq:x-i}) we obtain
\begin{align}
\label{eq:vx-dyn}
\frac{d x_i (t,\vp) }{dt} = \left( \sum_{j\in E} q_{j,i}  x_j (t,\vp) - \lambda_i  x_i (t,\vp)  + x_i (t,\vp)   \sum_{j \in E} \lambda_{j}  x_j (t,\vp)  \right). %+ \int_{\R_+ } \frac{\Pi_i (t) \lambda_i p_i (y) }{ \sum_{ j \in E}  \Pi_j (t) \lambda_j p_j (y) }   N(dt,dy)
\end{align}
For the sequel we note again that $\P^{\vp} \left\{ \sigma_1 \in du , M_u =i \right\} =
\E^{\vp,a} \left[ \lambda_i 1_{ \{ M_u =i \} } \e^{- I(u)} \right] du =\lambda_i \, m_i(u,\vp)
\, du $.

The preceding equations \eqref{eq:semi-group} and \eqref{eq:jumps-of-vP} imply that
\begin{prop} \label{cor:pdp} The
process $\vP$ is a piecewise-deterministic, $(\P, \F)$-Markov process. The paths have the
characterization
     \begin{align}\label{eq:rel-pi-x} \left\{
     \begin{aligned}
    \vP(t)&=  \vx \left(t-\sigma_\ell,\vP({\sigma_\ell})\right), \qquad
\qquad \sigma_\ell \leq t< \sigma_{\ell+1}, \;\; \ell\in \mathbb{N} \qquad \\
\vP(\sigma_\ell)&= \left( \frac{  \lambda_1 f_1(Y_\ell) \Pi_1(\sigma_\ell-) }{ \sum_{j \in E}
\lambda_j f_j(Y_\ell) \Pi_j(\sigma_\ell-) }, \ldots \frac{  \lambda_m f_m(Y_\ell)
\Pi_m(\sigma_\ell-) }{ \sum_{j \in E} \lambda_j f_j(Y_\ell) \Pi_j(\sigma_\ell-) }\right)
\end{aligned} \right\}.
\end{align}
\end{prop}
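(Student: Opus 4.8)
The plan is to first read off the path characterization \eqref{eq:rel-pi-x} directly from the filtering recursions already derived, and then to package $\vP$ as a piecewise-deterministic process (PDP) in the sense of \cite{davis93}, from which the strong $(\P,\F)$-Markov property follows from general theory. This mirrors the arguments behind Proposition 2.1 of \cite{LS07} and Proposition 2.1 of \cite{BS06}.

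First I would obtain the two lines of \eqref{eq:rel-pi-x}. For the between-arrivals evolution I would set $t=\sigma_\ell$ in the semigroup relation \eqref{eq:semi-group} and compare its right-hand side with the definition \eqref{eq:x-i} of $\vx$, which gives at once
\[
\Pi_i(\sigma_\ell+u) = x_i\bigl(u,\vP(\sigma_\ell)\bigr), \qquad 0 \le u < \sigma_{\ell+1}-\sigma_\ell .
\]
Substituting $u=t-\sigma_\ell$ yields the first line of \eqref{eq:rel-pi-x}, and the jump line is literally \eqref{eq:jumps-of-vP} re-indexed to $\ell$. I would also note that, by \eqref{eq:vx-dyn}, the flow $u\mapsto\vx(u,\vp)$ solves an \emph{autonomous} system of ODEs, so between consecutive arrivals $\vP$ is the deterministic solution of \eqref{eq:vx-dyn} started at $\vP(\sigma_\ell)$; the semigroup identity $\vx(t+u,\vp)=\vx(u,\vx(t,\vp))$ guarantees this description is internally consistent, i.e.\ the flow does not remember how the current point was reached.

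To establish the PDP/Markov claim I would identify the local characteristics of Davis: the deterministic flow is $\vx$ (the vector field \eqref{eq:vx-dyn}), the jump epochs are the arrival times $\sigma_\ell$ of $X$, and the post-jump map is \eqref{eq:jumps-of-vP}. The one substantive point is that the jump mechanism depends on the past only through the current state $\vP(t)$. This is exactly the statement that the $(\P,\F)$-intensity kernel of the marked point process $p$ is a function of the filter: projecting the $(\P,\G)$-compensator onto $\F$ replaces $1_{\{M_s=i\}}$ by its predictable filtered version $\Pi_i(s-)$, giving the kernel
\[
\sum_{i \in E} \lambda_i\,\Pi_i(s-)\,f_i(y)\,\nu(dy)\,ds,
\]
so that, conditioned on $\F^X_t$, the law of the waiting time to the next arrival and of its mark depends on the history only through $\vP(t)$. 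Concretely I would combine the identity $\P^{\vp}\{\sigma_1\in du,\,M_u=i\}=\lambda_i\,m_i(u,\vp)\,du$ recorded just above the proposition with the semigroup property of $\vx$ to exhibit both the inter-arrival density and the mark distribution, as seen from state $\vP(t)$, as functions of $\vP(t)$ alone. Together with the deterministic flow between jumps and the deterministic post-jump map \eqref{eq:jumps-of-vP}, this shows $\vP$ is a PDP, and \cite{davis93} then yields that $\vP$ is a strong $(\P,\F)$-Markov process; the identical argument applies to $(\vP,\xi)$, since $\xi$ is piecewise constant and changes only at the controller's $\F$-stopping times.

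The main obstacle is this projection step: verifying rigorously that the filtered jump rate and mark law \emph{close up} as functions of $\vP$, so that the innovations structure of the filter leaks no information beyond the current belief. Once this self-containedness of the filter dynamics is in hand, both the path characterization and the appeal to \cite{davis93} are routine. A minor technical check I would not belabour is that the flow $\vx$ keeps $\vP$ in $D$ without forcing boundary jumps, so that every jump of $\vP$ is accounted for by an arrival of $X$.
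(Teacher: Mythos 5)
Your proposal is correct and follows essentially the same route as the paper: the paper obtains \eqref{eq:rel-pi-x} directly by combining the semigroup relation \eqref{eq:semi-group} (via the definition \eqref{eq:x-i} of $\vx$ and the flow property) with the jump formula \eqref{eq:jumps-of-vP}, and then appeals to \cite{davis93} (and to Proposition 2.1 of \cite{LS07} and Proposition 2.1 of \cite{BS06}) for the piecewise-deterministic and strong Markov structure. The compensator-projection point you flag as the main obstacle is exactly what the paper records immediately after the proposition, namely that the $(\P,\F)$-compensator of $p$ is $\tilde{p}((0,t]\times A)=\sum_{j\in E}\int_0^t\int_A \lambda_j f_j(y)\Pi_j(s)\,dy\,ds$, so the jump intensity and mark law are functions of the current belief alone.
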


Alternatively, we can describe $\vP$ in terms of the random measure $p$,
\begin{align}\notag
d \Pi_i(t) & = \mu_i( \vP({t-}) )\, dt + \int_{\R^d} J_i(\vP({t-}),y) \,
p(dt,dy),
\end{align}
for all $i \in E$, where
\begin{equation}\label{eq:mu-J-operators}
\begin{split}
 \mu_i(\vp)=\sum_{j \in E}q_{j,i} \pi_j+ \lambda \pi_{i} \left(\sum_{j \in E} \lambda_j \pi_j-\lambda_i\!\right), \quad\text{and}\quad J_i(\vp,y)= \pi_i \cdot \left(\frac{\lambda_i f_i(y)}{\sum_{j \in E} \lambda_j f_j(y)
\pi_j}-1\right).
\end{split}
\end{equation}
Here, one should also note that the $(\P,\F)$-compensator of the random measure $p$ is
\begin{equation*}
\tilde{p}((0,t] \times A)=  \sum_{j \in E} \int_0^t \int_A \lambda_j f_j(y) \Pi_j(s) \, dy\,ds,
\qquad t \geq 0, A \, \text{ Borel.}
\end{equation*}

In more general models with point process observations, an explicit filter for $\vP$ would not
be available and one would have to resort to simulation-based approaches, see e.g.\
\cite{ChopinVarini07}. The subsequent optimization step would then appear to be intractable,
though an integrated Markov chain Monte Carlo paradigm for filtering and optimization was
proposed in \cite{MullerEtal04}.

\section{Two Dynamic Programming Equations for the Value Function}
\label{sec:sequential} In this section we establish two dynamic programming equations for the
value function $U$. The first key equation  \eqref{eq:dyn-prog} reduces the solution of the
problem \eqref{def:U} to studying a system of coupled optimal stopping problems. The second
dynamic programming principle of Proposition \ref{eq:hat-wn} shows that the value function is
also the fixed point of a first jump operator. The latter representation will be useful in the
numerical computations.

\subsection{Coupled Optimal Stopping Operator}
In this section we show that $U$ solves a coupled optimal stopping problem. Combined with
regularity results in Section \ref{sec:opt-strategy}, this leads to a direct characterization
of an optimal strategy. The analysis of this section parallels the general framework of impulse
control of piecewise deterministic processes (pdp) developed by
\cite{CostaDavis89,LenhartLiao}. It is also related to optimal stopping of pdp's studied in
\cite{gugerli,CostaDavis88}. %We refer to \cite{MazziottoEtal88} for
%general theory of impulse control with partial information, and \cite{PeskirShiryaev, MR2003i:60071, BD03, bdk05, BS06, LS07} for optimal stopping problems with partial information.

Let us introduce a functional operator $\M$ whose action on a test function $w$ is
\begin{align}\label{eq:intervention-op} \M w(T,\vp,a) \triangleq \max_{b \in \A, \, b \neq
a} \Bigl\{w(T, \vp, b) - K(a,b,\vp) \Bigr\}.
\end{align}
The operator $\M$ is called the \emph{intervention} operator and denotes the maximum value that
can be achieved if an immediate best change is carried to the current policy. Assuming some
ordering on the finite policy set $\A$, let us denote the smallest policy choice achieving the
maximum in \eqref{eq:intervention-op} as
\begin{align}\label{eq:optimal-action}
d_{\M w}(T,\vp, a) \triangleq \min_{b \in \A} \Bigl\{ w(T,\vp, b) - K(a,b,\vp) = \M w(T,\vp,a)
\Bigr\}.
\end{align}

The main object of study in this section is another functional operator $\G$ whose action is
described by the following optimal stopping problem:
\begin{equation}\label{def:G}
\G V(T, \vp,a)=\sup_{\tau \in \s(T)} \E^{\vp,a} \left[ \int_0^\tau \e^{-\rho s} C(\vP_s, a) \,
ds + \e^{-\rho \tau} \M V(T-\tau, \vP_{\tau}, a) \right],
\end{equation}
for $\,T \in \R_+,\, \vp \in D$, and $a \in \A$. We set $V_0 \triangleq U_0$ from
\eqref{def:U-0} and iterating $\G$ obtain the following sequence of functions:
\begin{equation}\label{defn:Un}
V_{n+1} \triangleq \G V_n, \quad n \geq 0.
\end{equation}
\begin{lemm}\label{lem:incr}
$(V_n)_{n \in \mathbb{N}}$ is an increasing sequence of functions.
\end{lemm}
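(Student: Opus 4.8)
The plan is to argue by induction on $n$, reducing the statement to two facts: (i) the base inequality $V_0 \le V_1$, and (ii) monotonicity of the operator $\G$, i.e.\ that $V \le W$ pointwise implies $\G V \le \G W$. Granting these, the induction is immediate: if $V_{n-1} \le V_n$, then applying (ii) gives $V_n = \G V_{n-1} \le \G V_n = V_{n+1}$, so $V_n \le V_{n+1}$ for every $n \ge 0$, which is the assertion.

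For step (ii) I would first record that the intervention operator $\M$ is monotone. Since $\M w(T,\vp,a) = \max_{b \ne a}\{w(T,\vp,b) - K(a,b,\vp)\}$ is, for each fixed $(T,\vp,a)$, a maximum of expressions that are nondecreasing in the test function $w$, the hypothesis $V \le W$ yields $\M V(T-\tau,\vP_\tau,a) \le \M W(T-\tau,\vP_\tau,a)$ pathwise on $\Omega$ for every $\tau \in \s(T)$. Consequently, for each admissible $\tau$ the random integrand-plus-terminal expression defining $\G V$ in \eqref{def:G} is dominated by the corresponding expression for $\G W$; applying $\E^{\vp,a}$ preserves the inequality, and taking the supremum over $\tau \in \s(T)$ on both sides gives $\G V \le \G W$. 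The only point requiring care is the interchange of the supremum with the inequality, but this is routine because the bound holds for each $\tau$ separately.

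Step (i) is the main obstacle, and it is where the problem structure must be invoked. I would prove $V_1 = \G U_0 \ge U_0 = V_0$ by locating, inside the optimal stopping problem \eqref{def:G} defining $\G U_0$, the no-intervention choice, which is always available and recovers exactly the no-action value $U_0$. The key tool is the flow (tower) property of $U_0$, which follows from the strong Markov property of $\vP$ established in Proposition~\ref{cor:pdp}: for any $\tau \in \s(T)$,
\begin{align*}
U_0(T,\vp,a) = \E^{\vp,a}\!\left[ \int_0^\tau \e^{-\rho s} C(\vP_s,a)\, ds + \e^{-\rho \tau}\, U_0(T-\tau, \vP_\tau, a) \right],
\end{align*}
obtained by splitting the integral in \eqref{def:U-0} at $\tau$ and conditioning on $\F_\tau$. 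Comparing this identity with \eqref{def:G}, the base inequality reduces to the statement that declining to intervene is an admissible policy in the supremum defining $\G U_0$ and leaves the controller with the continuation value $U_0$; hence $\G U_0 \ge U_0$. The delicate part is precisely to make this rigorous: one must verify that $\G$ genuinely embeds the option of \emph{not} switching, so that adding the possibility of one intervention can never push the value below $U_0$, rather than \emph{forcing} a costly switch at the horizon.

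Finally I would assemble the pieces. Step (i) supplies $V_0 \le V_1$, and step (ii) then propagates the inequality upward along the iteration $V_{n+1} = \G V_n$, yielding $V_n \le V_{n+1}$ for all $n \in \N$ and hence the monotonicity of the sequence. I expect the entire weight of the argument to rest on (i), specifically on formalizing the admissibility of the no-action strategy within the stopping formulation \eqref{def:G}; the monotonicity in (ii) and the inductive assembly are then essentially bookkeeping.
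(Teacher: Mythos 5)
Your proposal is correct and follows the paper's own proof almost exactly: the paper likewise establishes the base inequality $V_1 = \G V_0 \ge U_0 = V_0$ and then invokes monotonicity of $\G$ (for $f_1 \le f_2$ one has $\G f_1 \le \G f_2$) to propagate the inequality inductively. If anything you are more explicit than the paper, which asserts the base inequality in a single display and states monotonicity of $\G$ without proof; the ``delicate part'' you flag --- that the stopping problem \eqref{def:G} must be read as embedding the option of not intervening, since every $\tau \in \s(T)$ formally triggers $\M$ and hence a switching cost of at least $k_0$ --- is precisely the point the paper's one-line argument glosses over as well.
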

\noindent In Section \ref{sec:opt-strategy} we will further show that $(V_n)$ are convex and
continuous.
\begin{proof}
The statement follows since
\begin{align*}
V_1(T,\vp,a) = \G V_0(T,\vp,a) & = \sup_{\tau \in \s(T)} \E^{\vp,a} \left[ \int_0^\tau
\e^{-\rho s} C(\vP_s, a) \, ds + \e^{-\rho \tau} \M V_0(T-\tau, \vP_{\tau}, a) \right]\\
& \ge \E^{\vp,a} \left[ \int_0^T \e^{-\rho s} C(\vP_s, a) \, ds \right] = U_0(T,\vp,a) =
V_0(T,\vp,a),
\end{align*}
and since $\G$ is a monotone/positive operator, i.e.\ for any two functions $f_1 \leq f_2$ we
have  $\G f_1 \leq \G f_2$, and
\end{proof}
The following proposition shows that the value functions $(U_n)_{n \in \mathbb{N}}$ of
\eqref{eq:aux}, which correspond to the restricted control problems over $\U_n(T)$, can be
alternatively obtained via the sequence of iterated optimal stopping problems in
\eqref{defn:Un}.
\begin{prop}\label{prop:UneqVn}
$U_n=V_n$ for $n \in \mathbb{N}$.
\end{prop}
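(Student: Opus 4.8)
The plan is to prove $U_n = V_n$ by induction on $n$, exploiting the dynamic programming structure of the switching problem and the strong Markov property of the piecewise-deterministic process $\vP$. The base case $n=0$ is immediate: $U_0$ is by definition \eqref{def:U-0} the value of taking no action, and we set $V_0 \triangleq U_0$. For the inductive step, I would assume $U_n = V_n$ and show $U_{n+1} = V_{n+1} = \G V_n$.

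\medskip\noindent\textbf{Inductive step.} First I would establish the inequality $U_{n+1} \le \G V_n$. Fix an admissible strategy $\xi \in \U_{n+1}(T)$ with at most $n+1$ interventions, and let $\tau_1$ denote its first switching time (a stopping time in $\s(T)$). Before $\tau_1$ the controller runs policy $a$, accruing $\int_0^{\tau_1} \e^{-\rho s} C(\vP_s, a)\,ds$; at $\tau_1$ she pays the switching cost $K(a, \xi_{\tau_1}, \vP_{\tau_1})$ and thereafter follows a strategy with at most $n$ remaining interventions on the horizon $T - \tau_1$. Conditioning on $\Fc^X_{\tau_1}$ and using that $\vP$ is a strong Markov process (Proposition~\ref{cor:pdp}), the continuation value starting from the post-jump state $(\vP_{\tau_1}, \xi_{\tau_1})$ with horizon $T-\tau_1$ is bounded above by $U_n(T-\tau_1, \vP_{\tau_1}, \xi_{\tau_1}) = V_n(T-\tau_1, \vP_{\tau_1}, \xi_{\tau_1})$ by the inductive hypothesis. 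Taking the supremum over the choice of the new policy $\xi_{\tau_1} = b \neq a$ produces exactly $\M V_n(T-\tau_1, \vP_{\tau_1}, a)$, and then taking the supremum over $\tau_1 \in \s(T)$ (allowing $\tau_1 = T$, i.e.\ no intervention, so that $U_0$-type terms are subsumed) yields $U_{n+1}(T,\vp,a) \le \G V_n(T,\vp,a) = V_{n+1}(T,\vp,a)$.

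\medskip\noindent\textbf{Reverse inequality.} For $U_{n+1} \ge \G V_n$, I would construct a near-optimal admissible strategy in $\U_{n+1}(T)$ realizing the value $\G V_n$. Given $\eps > 0$, pick a stopping time $\tau$ and a policy $b$ that are $\eps$-optimal in the stopping problem \eqref{def:G} defining $\G V_n$; by the inductive hypothesis $V_n = U_n$, so there exists an $\eps$-optimal continuation strategy in $\U_n(T-\tau)$ from the state $(\vP_\tau, b)$. Concatenating ``run $a$ until $\tau$, switch to $b$, then follow the continuation strategy'' gives an element of $\U_{n+1}(T)$ whose performance is within $2\eps$ of $\G V_n(T,\vp,a)$. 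Letting $\eps \downarrow 0$ gives $U_{n+1} \ge V_{n+1}$.

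\medskip The main obstacle will be the measurable-selection and concatenation bookkeeping in the reverse inequality: one must verify that the spliced strategy is genuinely $\F$-adapted and admissible, that the $\eps$-optimal continuation can be selected measurably as a function of the post-jump state $(\vP_\tau, b)$, and that the time-shift by $\tau$ interacts correctly with the discounting (the factor $\e^{-\rho\tau}$ in front of $\M V_n$ matches the restart of the clock). The strong Markov property and the semigroup property $\vx(t+u,\vp) = \vx(u, \vx(t,\vp))$ recorded after \eqref{eq:x-i} are precisely what make the dynamic-programming decomposition valid across the first switching time, so the argument reduces to carefully applying these together with the standard tower property for conditional expectations.
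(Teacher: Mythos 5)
Your proposal is correct and follows essentially the same route as the paper's proof: induction on $n$, with the inequality $U_{n+1}\le V_{n+1}$ obtained by decomposing an (almost) optimal strategy at its first switching time and invoking the strong Markov property together with the inductive hypothesis and the definitions of $\M$ and $\G$, and the reverse inequality by concatenating an $\eps$-optimal stopping time and best switch for the problem $\G V_n$ with an $\eps$-optimal strategy in $\U_n$. The concatenation bookkeeping you flag is resolved in the paper exactly as you anticipate: the $\eps$-optimal stopping time is realized as the hitting time $\inf\{t\ge 0: \M V_n(T-t,\vP_t,a)\ge V_{n+1}(T-t,\vP_t,a)-\eps\}$, and the continuation strategy is attached via the shift operator $\theta_{\bar{\tau}_1}$ using the strong Markov property of $(\vP,\xi)$.
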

\begin{proof}
By definition we have that $U_0=V_0$. Let us assume that $U_n=V_n$ and show that
$U_{n+1}=V_{n+1}$. We will carry out the proof in two steps.

\textbf{Step 1}. First we will show that  $U_{n+1} \leq V_{n+1}$. Let $\xi \in \U_{n+1}(T)$,
\begin{align*}
\xi_t = \sum_{k=0}^{n+1} \xi_k \cdot 1_{[\tau_k, \tau_{k+1})}(t), \quad t \in [0,T],
\end{align*}
with $\tau_0=0$ and $\tau_{n+1}=T$, be $\eps$-optimal for the problem in (\ref{eq:aux}), i.e.,
\begin{equation}\label{eq:eps-opt}
U_{n+1}(T,\vp,a)-\eps \leq J^{\xi}(T,\vp,a).
\end{equation}
Let $\tilde{\xi} \in \U_n(T)$ be defined as
\begin{equation*}
\tilde{\xi}_t=\sum_{k=0}^{n} \tilde{\xi}_k \cdot 1_{[\tilde{\tau}_k, \tilde{\tau}_{k+1})}(t), \quad t \in [0,T],
\end{equation*}
in which $\tilde{\tau}_0=0$, $\tilde{\xi}_0=a$, and $\tilde{\tau}_n=\tau_{n+1}$ ,
$\tilde{\xi}_n=\xi_{n+1}$,  for $n \in \mathbb{N}_+$. Using the strong Markov property of
$(\vP,\xi)$, we can write $J^{\xi}$ as
\begin{equation}\label{eq:eps-argument}
\begin{split}
J^{\xi}(T,\vp,a)&= \E^{\vp,a}\left[\int_0^{\tau_1}\e^{-\rho s} C(\vP_s,a)\, ds+\e^{-\rho
\tau_1}\left(J^{\tilde{\xi}}(T-\tau_1,\vP_{\tau_1},\xi_1)-K(a,\xi_1,\vP_{\tau_1})\right)\right]
\\ &\leq \E^{\vp,a}\left[\int_0^{\tau_1}\e^{-\rho s} C(\vP_s,a) \, ds+\e^{-\rho \tau_1}\left(V_{n}(T-\tau_1,\vP_{\tau_1},\xi_1)-K(a,\xi_1,\vP_{\tau_1})\right)\right]
\\ &\leq \E^{\vp,a}\left[\int_0^{\tau_1}\e^{-\rho s} C(\vP_s,a) \, ds+\
\e^{-\rho \tau_1} \M V_n  (T-\tau_1, \vP_{\tau_1},\xi_1)\right]
\\ &\leq \G V_{n}(T,\vp,a)=V_{n+1}(T,\vp,a).
\end{split}
\end{equation}
Here, the first inequality follows from induction hypothesis, the second inequality follows
from the definition of $\M$, and the last inequality from the definition of $\G$. As a result
of (\ref{eq:eps-opt}) and (\ref{eq:eps-argument}) we have that $U_{n+1} \leq V_{n+1}$ since
$\eps>0$ is arbitrary.

\textbf{Step 2}. To show the opposite inequality $U_{n+1} \geq V_{n+1}$, we will construct a
special $\bar{\xi} \in \U_{n+1}(T)$. To this end let us introduce
\begin{align}\left\{ \begin{aligned}
\bar{\tau}_1 & = \inf\{t\geq 0: \M V_n(T-t,\vP_t, a) \geq V_{n+1}(T-t,\vP_t,a)-\eps \}, \\
\bar{\xi}_1 & = d_{\M V_n}(T-\bar{\tau}_1,\vP_{\bar{\tau}_1},a). \end{aligned}\right.
\end{align}
\noindent  Let $\hat{\xi}_t = \sum_{k=0}^{n} \hat{\xi}_k \cdot 1_{[\hat{\tau}_k,
\hat{\tau}_{k+1})}(t)$, $\hat{\xi} \in \U_n(T)$ be $\eps$-optimal for the problem in which $n$
interventions are allowed, i.e.\ (\ref{eq:aux}). Using $\hat{\xi}$ we now complete the
description of the control $\bar{\xi} \in \U_{n+1}(T)$ by assigning,
\begin{equation}
\bar{\tau}_{n+1}=\hat{\tau}_n \circ \theta_{\tau_1}, \quad \bar{\xi}_{n+1}=\hat{\xi}_{n} \circ \theta_{\bar{\tau}_1}, \quad n \in \mathbb{N}_+,
\end{equation}
in which $\theta$ is the classical \emph{shift operator} used in the theory of Markov processes.

Note that $\bar{\tau}_1$ is an $\eps$-optimal stopping time for the stopping problem in the
definition of $\G V_n$. This follows from the classical optimal stopping theory since the
process $\vP$ has the strong Markov property. Therefore,
\begin{equation}\label{eq:tos}
\begin{split}
V_{n+1}(T,\vp,a)-\eps &\leq \E^{\vp,a}\left[\int_0^{\bar{\tau}_1}\e^{-\rho s}C(\vP_s,a) \,
ds+\e^{-\rho \bar{\tau}_1}\M V_{n}(T-\bar{\tau}_1,\vP_{\bar{\tau}_1},a)\right]
\\& \leq  \E^{\vp,a}\left[\int_0^{\bar{\tau}_1}\e^{-\rho s}C(\vP_s,a)\, ds+\e^{-\rho \bar{\tau}_1} \left(U_{n}(T-\bar{\tau}_1,\vP_{\bar{\tau}_1},\bar{\xi}_1)-K\left(a,\bar{\xi}_1,\vP_{\bar{\tau}_1}\right)\right)\right],
\end{split}
\end{equation}
in which the second inequality follows from the definition of $\bar{\xi}_1$ and the induction hypothesis.
It follows from (\ref{eq:tos}) and the strong Markov property of $(\vP,\xi)$ that
\begin{equation}
\begin{split}
V_{n+1}(T,\vp,a)-2\eps &\leq \E^{\vp,a}\left[\int_0^{\bar{\tau}_1}\e^{-\rho
s}C(\vP_s,a)\,ds+\e^{-\rho \bar{\tau}_1}
\left(U_{n}(T-\bar{\tau}_1,\vP_{\bar{\tau}_1},\bar{\xi}_1)-\eps-K\left(a,\xi_1,\vP_{\bar{\tau}_1}\right)\right)\right]
\\&\leq \E^{\vp,a}\left[\int_0^{\bar{\tau}_1}\e^{-\rho s}C(\vP_s,a)\, ds+\e^{-\rho \bar{\tau}_1} \left(J^{\hat{\xi}}(T-\bar{\tau}_1,\vP_{\bar{\tau}_1},\xi_1)-K\left(a,\xi_1,\vP_{\bar{\tau}_1}\right)\right)\right]
\\&=J^{\bar{\xi}}(T,\vp,a) \leq U_{n+1}(T,\vp,a).
\end{split}
\end{equation}
This completes the proof of the second step since $\eps>0$ is arbitrary.
\end{proof}
\begin{prop}\label{prop:vnconvU}
$\lim_{n \uparrow \infty}V_{n}(T,\vp,a)=U(T,\vp,a), \quad\text{for any}\quad T \in \R_+$, $\vp
\in D$, $a \in \A$.
\end{prop}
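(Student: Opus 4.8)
The plan is to prove the two inequalities $\lim_n V_n \le U$ and $\lim_n V_n \ge U$ separately, after first checking that the limit exists. By Proposition~\ref{prop:UneqVn} we may replace $V_n$ throughout by the restricted value function $U_n$. Since $\U_n(T) \subseteq \U(T)$, each $U_n \le U$; combining this with Lemma~\ref{lem:incr} (monotonicity of $V_n = U_n$) and the uniform bound $U \le cT$, which follows from $|C|\le c$ and the positivity of the switching costs, I conclude that the increasing sequence $(V_n)$ converges to some $V_\infty \le U$. This already gives the easy direction.

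For the reverse inequality $V_\infty \ge U$, the idea is to approximate an arbitrary admissible $\xi \in \U(T)$ by its truncations. I would fix $\xi$ with switching times $\tau_1 < \tau_2 < \cdots$ and write $N = N(\xi)$ for the total number of switches on $[0,T]$, which is finite $\P^{\vp,a}$-a.s.\ by the standing assumptions on $K$. Define $\xi^{(n)}$ to be the strategy that executes the first $\min(N,n)$ switches of $\xi$ and makes no further interventions; then $\xi^{(n)} \in \U_n(T)$, so that $J^{\xi^{(n)}} \le U_n = V_n$. The key step will be to show that $J^{\xi^{(n)}}(T,\vp,a) \to J^{\xi}(T,\vp,a)$ as $n \to \infty$.

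To establish this convergence I would control the running-reward and switching-cost contributions separately. On $\{N \le n\}$ the strategies $\xi$ and $\xi^{(n)}$ coincide, so their running rewards differ only on $\{N > n\}$, where the difference is bounded in absolute value by $2cT$; hence $\E^{\vp,a}\bigl[\,\bigl|\int_0^T \e^{-\rho s}\bigl(C(\vP_s,\xi_s) - C(\vP_s,\xi^{(n)}_s)\bigr)\,ds\bigr|\,\bigr] \le 2cT\,\P^{\vp,a}\{N > n\} \to 0$, since $N<\infty$ a.s. The switching-cost discrepancy is the tail $\sum_{k>n} \e^{-\rho \tau_k} K(\xi_{\tau_k-},\xi_{\tau_k},\vP_{\tau_k})\,1_{\{k\le N\}}$, whose expectation vanishes as $n\to\infty$ by dominated convergence, using precisely the integrability assumption~\eqref{eq:sum-expe-sw-cst}. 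Combining the two estimates yields $J^{\xi^{(n)}} \to J^{\xi}$, whence $J^{\xi} = \lim_n J^{\xi^{(n)}} \le \lim_n V_n = V_\infty$. Taking the supremum over $\xi \in \U(T)$ then gives $U \le V_\infty$, which completes the argument.

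The main obstacle is the convergence $J^{\xi^{(n)}} \to J^{\xi}$, and in particular the switching-cost tail: this is exactly where the a.s.\ finiteness of the number of switches and the integrability condition~\eqref{eq:sum-expe-sw-cst} are indispensable, since without them the truncation could lose a non-negligible amount of cost in the limit. By contrast, once $\P^{\vp,a}\{N > n\} \to 0$ is observed, the running-reward term is routine.
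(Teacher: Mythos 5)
Your proposal is correct and follows essentially the same route as the paper's own proof: both establish $\lim_n V_n \le U$ from $\U_n(T) \subseteq \U(T)$ together with Proposition~\ref{prop:UneqVn}, and both prove the reverse inequality by truncating an arbitrary $\xi \in \U(T)$ at its $n$-th switch (your $\xi^{(n)}$ is the paper's $\tilde\xi_t = \xi_{t\wedge\tau_n}$) and showing $J^{\xi^{(n)}} \to J^{\xi}$ by handling the running-reward difference via boundedness and a.s.\ finiteness of the number of switches, and the switching-cost tail via the integrability assumption~\eqref{eq:sum-expe-sw-cst}. The only differences are cosmetic (you bound the reward discrepancy by $2cT\,\P^{\vp,a}\{N>n\}$ where the paper invokes dominated convergence pathwise, and you use dominated rather than monotone convergence for the cost tail), so there is nothing to fix.
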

\begin{proof}
Fix $(T,\vp,a)$. The monotone limit $V(T,\vp,a)=\lim_{n \rightarrow \infty}V_n(T,\vp,a)$ exists
as a result of Lemma~\ref{lem:incr}. Since $\U_n(T) \subset \U(T)$, it follows that
$V_n(T,\vp,a)=U_n(T,\vp,a) \leq U(T, \vp, a)$. Therefore $V(T,\vp,a) \leq U(T,\vp,a)$. In the
remainder of the proof we will show that $V(T,\vp,a) \geq U(T,\vp,a)$.

Let $\xi \in \U(T)$ be given, and let $\tilde{\xi}_t :=\xi_{t \wedge \tau_n}$, $\tilde{\xi} \in
\U_n(T)$,  correspond to $\xi$ up to its $n$-th switch. Then
\begin{multline}\label{eq:int-step}
|J^{\xi}(T,\vp,a)-J^{\tilde{\xi}}(T,\vp,a)| \\ \leq \E^{\vp,a}\bigg[\int_{\tau_n}^{T}\e^{-\rho
s}|C(\vP_s,\xi_s)-C(\vP_s,\tilde{\xi}_{\tau_n})| \, ds + \sum_{k \geq n+1} \e^{-\rho
\tau_k}K(\xi_{\tau_{k-1}},\xi_{\tau_{k}},\vP_{\tau_{k}}) \bigg].
%\\& \leq 2c \, \E^{\vp,a}\left[\int_{\tau_n}^{T}\e^{-\rho s}ds\right]+\E^{\vp,a}\left[\sum_{k \geq n+1} \e^{-\rho \tau_k}K(\xi_{\tau_{k-1}},\xi_{\tau_{k}},\vP({\tau_{k}}))\right],
\end{multline}
\noindent Now, the right-hand-side of (\ref{eq:int-step}) converges to 0 as $n \rightarrow
\infty$: on the one hand observe that by monotone convergence theorem and
(\ref{eq:sum-expe-sw-cst})
\[
\lim_{n \rightarrow \infty}\E^{\vp,a}\left[\sum_{k \geq n+1} \e^{-\rho
\tau_k}K(\xi_{\tau_{k-1}},\xi_{\tau_{k}},\vP_{\tau_{k}})\right]=0.
\]
On the other hand, since there are only finitely many switches almost surely for any given
path,
\[
\lim_{n \rightarrow \infty}\int_{0}^{T}1_{\{s>\tau_n\}}\e^{-\rho
s}|C(\vP_s,\xi_s)-C(\vP_s,\tilde{\xi}_{\tau_n})| \, ds=0,
\]
and $\int_{\tau_n}^{T}\e^{-\rho s}|C(\vP_s,\xi_s)-C(\vP_s,\xi_{\tau_n})|ds \leq 2 c T$.
Therefore, the dominated convergence theorem  implies that
\[
\lim_{n \rightarrow \infty}\E^{\vp,a}\left[\int_{\tau_n}^{T}\e^{-\rho
s}|C(\vP_s,\xi_s)-C(\vP_s,\tilde{\xi}_{\tau_n})| \, ds\right]=0.
\]
As a result, for any $\eps > 0$ and $n$ large enough, we find
\begin{equation*}
|J^{\xi}(T,\vp,a)-J^{\tilde{\xi}}(T,\vp,a)| \leq \eps.
\end{equation*}
Now, since $\tilde{\xi} \in \U_n(T)$ we have $V_n(T,\vp,a) =U_{n}(T,\vp,a)\geq
J^{\tilde{\xi}}(T,\vp,a) \geq J^{\xi}(T,\vp,a)-\eps$ for sufficiently large $n$, and it follows
that
\begin{equation}\label{eq:WgewJ}
V(T,\vp,a)=\lim_{n \rightarrow \infty}V_{n}(T,\vp,a)\geq J^{\xi}(T,\vp,a)-\eps.
\end{equation}
Since $\xi$ and $\eps$ are arbitrary, we have the desired result.
\end{proof}

\begin{prop}\label{prop:dp}
The value function $U$ is the smallest solution of the dynamic programming equation $\G U=U$,
such that $U \geq U_0$. Thus,
\begin{align}\label{eq:dyn-prog}
U(T, \vp, a) = \sup_{\tau \in \s(T)} \E^{\vp,a} \left[ \int_0^\tau \e^{-\rho s} C(\vP_s, a) \,
ds + \e^{-\rho \tau} \M U(T-\tau, \vP_{\tau}, a) \right].
\end{align}
\end{prop}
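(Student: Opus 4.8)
The plan is to build on the three facts already established for the iterates $V_n$ defined in \eqref{defn:Un}: they form an increasing sequence (Lemma~\ref{lem:incr}), they coincide with the restricted value functions $U_n$ (Proposition~\ref{prop:UneqVn}), and they converge pointwise to $U$ (Proposition~\ref{prop:vnconvU}). The statement has two halves. First, $U$ is a fixed point, $\G U = U$; the displayed equation \eqref{eq:dyn-prog} is then nothing but $\G U = U$ with the definition \eqref{def:G} of $\G$ spelled out. Second, $U$ is the smallest fixed point dominating $U_0$.

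For the fixed point, the first observation is that $\M$ commutes with the increasing limit. Since $\A$ is finite, for each fixed $(T,\vp,a)$ every term $V_n(T,\vp,b)-K(a,b,\vp)$ increases to $U(T,\vp,b)-K(a,b,\vp)$, and a maximum of finitely many increasing sequences increases to the maximum of their limits, so $\M V_n \uparrow \M U$ pointwise. Next, fix $\tau \in \s(T)$ and let $g_n(\tau)$ denote the expectation inside the supremum defining $\G V_n(T,\vp,a)$. Because $C$ and $K$ are uniformly bounded, all the $V_n = U_n$ are squeezed between $-cT$ and $cT$, and $\M V_n$ is uniformly bounded as well; hence the integrands are dominated uniformly in $n$ while $\M V_n(T-\tau,\vP_\tau,a)\uparrow \M U(T-\tau,\vP_\tau,a)$. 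Monotone (equivalently dominated) convergence then gives $g_n(\tau)\uparrow g_\infty(\tau)$, where $g_\infty(\tau)$ is the same expectation built from $\M U$.

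The crux is the interchange of the supremum over $\tau$ with the limit in $n$, and this is exactly where monotonicity pays off: since $g_n(\tau)$ is nondecreasing in $n$, limits are suprema and suprema commute, so
\[
\lim_n \G V_n = \sup_n \sup_{\tau \in \s(T)} g_n(\tau) = \sup_{\tau \in \s(T)} \sup_n g_n(\tau) = \sup_{\tau \in \s(T)} g_\infty(\tau) = \G U.
\]
On the other hand $\G V_n = V_{n+1} \to U$ by Proposition~\ref{prop:vnconvU}, so the left side equals $U$ and we conclude $\G U = U$, which is \eqref{eq:dyn-prog}. I expect this interchange, together with the passage of the limit through the expectation, to be the only genuinely technical point; the uniform boundedness of the data reduces the convergence inside the expectation to a routine application of dominated convergence, and the monotonicity of $(V_n)$ turns the sup--limit exchange into a harmless commuting of suprema rather than an upper-semicontinuity argument.

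For minimality, suppose $W \ge U_0$ satisfies $\G W = W$. Using that $\G$ is monotone (established in the proof of Lemma~\ref{lem:incr}), an induction shows $W \ge V_n$ for all $n$: the base case $W \ge U_0 = V_0$ is the hypothesis, and if $W \ge V_n$ then $W = \G W \ge \G V_n = V_{n+1}$. Letting $n \to \infty$ and invoking Proposition~\ref{prop:vnconvU} yields $W \ge U$, so $U$ is the smallest solution of $\G U = U$ with $U \ge U_0$.
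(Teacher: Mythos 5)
Your proof is correct and follows essentially the same route as the paper: both pass to the limit in $V_{n+1} = \G V_n$ using $\M V_n \uparrow \M U$ (finite maximum of increasing sequences) together with monotone/dominated convergence inside the expectation, and both establish minimality by the identical induction based on the monotonicity of $\G$. The only cosmetic difference is that you package the sup--limit interchange as the commuting of two suprema, whereas the paper unpacks the same fact into two inequalities, using monotonicity of $\G$ for one direction and an $\eps$-optimal stopping time for the other.
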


\begin{proof}
\textbf{Step 1}. First we will show that $U$ is a fixed point of $\G$. Since $V_n \le U$,
monotonicity of $\G$ implies that
\begin{equation*}
V_{n+1}(T,\vp,a)\leq \sup_{\tau \in \mathcal{S}(T)}\E^{\vp,a}\left[\int_0^{\tau}\e^{-\rho
s}C(\vP_s,a)ds+ \e^{-\rho \tau}\M U(T-\tau, \vP_{\tau},a)\right].
\end{equation*}
Taking the limit of the left-hand-side with respect to $n$ and using Lemma~\ref{lem:incr} and
Proposition~\ref{prop:vnconvU} we have
\begin{equation*}
U(T,\vp,a) \leq \sup_{\tau \in \mathcal{S}(T)}\E^{\vp,a}\left[\int_0^{\tau}\e^{-\rho
s}C(\vP_s,a)\, ds+ \e^{-\rho \tau}\M U(T-\tau, \vP_{\tau},a)\right].
\end{equation*}
Let us obtain the reverse inequality. Let $\tilde{\tau} \in \mathcal{S}(T)$ be an
$\eps$-optimal stopping time for the problem in the definition of $\G U$, i.e.,
\begin{equation}\label{eq:eps-opti}
\begin{split}
\E^{\vp,a}&\left[\int_0^{\tilde{\tau}}\e^{-\rho s}C(\vP_s,a)\,ds+ \e^{-\rho \tilde{\tau}}\M
U(T-\tilde{\tau}, \vP_{\tilde{\tau}},a)\right]
\\& \geq \sup_{\tau \in \mathcal{S}(T)} \E^{\vp,a}\left[\int_0^{\tau}\e^{-\rho s}C(\vP_s,a)\,ds+ \e^{-\rho \tau}\M U(T-\tau, \vP_{\tau},a)\right]- \eps.
\end{split}
\end{equation}
Then, as a result of monotone convergence theorem and Proposition~\ref{prop:vnconvU}
\begin{equation}\label{eq:lim-argu}
\begin{split}
U(T,\vp,a) =\lim_{n \rightarrow \infty} V_{n}(T,\vp,a) & \geq \lim_{n \rightarrow
\infty}\E^{\vp,a}\left[\int_0^{\tilde{\tau}}\e^{-\rho s}C(\vP_s,a) \, ds+ \e^{-\rho
\tilde{\tau}}\M V_{n-1}(T-\tilde{\tau}, \vP_{\tilde{\tau}},a)\right]
\\&=\E^{\vp,a}\left[\int_0^{\tilde{\tau}}\e^{-\rho s}C(\vP_s,a) \, ds+ \e^{-\rho \tilde{\tau}}\M U(T-\tilde{\tau}, \vP_{\tilde{\tau}},a)\right].
\end{split}
\end{equation}
Now, (\ref{eq:eps-opti}) and (\ref{eq:lim-argu}) together yield the desired result since $\eps$ is arbitrary.

\textbf{Step 2}. Let $\tilde{U}$ be another fixed point of $\G$ satisfying $\tilde{U} \geq
U_0=V_0$. Then an induction argument shows that $\tilde{U}\geq U$: assume that $\tilde{U} \geq
V_n$. Then $\G \tilde{U} \geq \G V_n=V_{n+1}$, by the monotonicity of $\G$. Therefore for all
$n$, $\tilde{U} \geq V_n$, which implies that $\tilde{U} \geq \sup_n V_n =U$.

\end{proof}

To illustrate the nature of \eqref{eq:dyn-prog} consider the special case where $\A = \{1, 2
\}$ so that only two types of policies are available. In that case the intervention operator
$\M$ is trivial, $\M U(t,\vp, a) = U(t, \vp, 3-a)- K(a,3-a,\vp)$. For ease of notation we write
$U(t,\vp, 1) =: V(t,\vp)$, $U(t,\vp,2) =: W(t,\vp)$. It follows that \eqref{eq:dyn-prog} can be
written as two coupled optimal stopping problems:
\begin{align*}
\left\{ \begin{aligned} V(T, \vp) = \sup_{\tau \in \s(T)} \E^{\vp,a} \left[ \int_0^\tau
\e^{-\rho s} C(\vP_s, 1) \, ds + \e^{-\rho \tau} (W(T-\tau, \vP_{\tau})-K(1,2,\vp)) \right] \\
W(T, \vp) = \sup_{\tau \in \s(T)} \E^{\vp,a} \left[ \int_0^\tau \e^{-\rho s} C(\vP_s, 2) \, ds
+ \e^{-\rho \tau} (V(T-\tau, \vP_{\tau}) - K(2,1,\vp)) \right]. \end{aligned}\right.
\end{align*}
The next section discusses how to solve such coupled systems.

\begin{rem}
The value function $U(T,\cdot,a)$ is uniformly bounded. Indeed,
\begin{align*}
 U(T,\vp,a) & \ge U_0(T,\vp,a) = \E^{\vp,a} \left[ \int_0^T \e^{-\rho s} C(\vP_s, a) \,ds
 \right] \ge -\int_0^T \e^{-\rho s} c \,ds,
 \end{align*} and conversely for any $\xi \in \U(T)$,
 \begin{align*}
J^{\xi}(T,\vp,a) & \le \E^{\vp,a} \left[ \int_0^T \e^{-\rho s} C(\vP_s, \xi_s)\, ds \right]
   \le \int_0^T \e^{-\rho s} c \, ds.
 \end{align*}
Since \begin{align*} \int_0^T \e^{-\rho s} c \, ds \le \left\{ \begin{array}{cc}
 c T & \text{ when }\rho = 0; \\ c/\rho & \text{ when }\rho > 0, \end{array}\right.
 \end{align*}
we see that when $\rho > 0$ those bounds are even uniform in $T$.
\end{rem}

\begin{rem}\label{rem:opt-stopping-as-special-case}
One may extend the above analysis to cover the slightly more general case where $K(a,b,\vp)$
are allowed to be negative, as long as we assume that for \emph{any} chain $a_0, a_1, \ldots,
a_n$, $a_i \in \A$ we have
$$
K(a_0, a_1, \vp) + K(a_1, a_2, \vp) + \ldots + K(a_n, a_0, \vp) > k_0 > 0,$$ uniformly. This
condition implies that repeated switching is unprofitable and guarantees that the number of
switches along any path is finite with probability one. Then taking $\A' = \{ 0, \Delta_1,
\ldots, \Delta_A \}$ and for any $i\in\A$, $K(0,\Delta_i, \vp) = -\sum_{j\in E} H(i, j) \pi_j$,
$K(\Delta_i, 0, \vp) = +\infty$, $C(\vp,\Delta_i) = 0$, one may imbed the optimal stopping
problems studied in \cite{BS06} and \cite{LS07} in our framework. Namely, it is easy to see
that in this case
\begin{align}
U(T, \vp, 0) = \sup_{\tau \in \S(T), \xi_1 \in \A} \E^{\vp, a}\left[ \int_0^\tau \e^{-\rho s}
C(\vP_s, 0)\, ds + \e^{-\rho \tau} H(\xi_1, M_\tau) \right].
\end{align}
In that sense, our model is a direct extension of optimal stopping problems for hidden Markov
models with Poissonian observations.
\end{rem}

\begin{rem}\label{rem:qvi}
Using the dynamic programming principle developed in Proposition~\ref{prop:dp} one expects that the value function $U$ is the unique weak solution of  a coupled system of QVIs (quasi-variational inequalities)
\begin{equation}
\begin{split}
-\frac{\partial}{\partial T}U (T,\vp,a) + \mathcal{A} U  (T,\vp,a) -\rho U  (T,\vp,a) + C(\vp,a) &\leq 0 \\
U  (T,\vp,a) &\geq \mathcal{M} U  (T,\vp,a)\\
\left(-\frac{\partial}{\partial T}U  (T,\vp,a) + \mathcal{A} U  (T,\vp,a) -\rho U  (T,\vp,a) + C (\vp,a)\right)(U ( (T,\vp,a))-\mathcal{M}U  (T,\vp,a))&=0.
\end{split}
\end{equation}
Here $\mathcal{A}$ is the infinitesimal generator of the process $\vP$ given by (2.9).
$\mathcal{A}$ is a first order integro-differential operator. Note that the differential operators do not differentiate with respect to $a$, therefore for each $a$ we obtain a different QVI. These QVIs are coupled by the action of the intervention operator $\mathcal{M}$.

One could attempt to
numerically solve the above system of QVIs. However, the theoretical basis for the QVI formulation
requires justification, in particular in terms of the regularity of the value function $U$.
Typically one must pass to the realm of viscosity solutions to make progress; in contrast in the next section we will develop another a more direct characterization of the value function (see Proposition~\ref{eq:hat-wn}). In Section~\ref{sec:opt-strategy} we will use this characterization to develop the regularity properties of $U$, which helps us describe an optimal control. The more direct characterization of the value function in Proposition~\ref{eq:hat-wn} also provides us a numerical method for numerically solving for the value function.
\end{rem}

%According to Proposition \ref{prop:dp}, to solve \eqref{def:V} one needs to solve the system of
%coupled optimal stopping problems that constitute \eqref{eq:dyn-prog}. The second part of the
%propositions shows that the solution of these optimal stopping problems can be used to
%explicitly construct an optimal strategy for V.

\subsection{First Jump Operator}
The following Proposition \ref{eq:hat-wn} shows that the value function $U$ satisfies a second
dynamic programming principle, namely $U$ is the fixed point of the first jump operator
$\hat{L}$. This representation will be used in our numerical computations in Section
\ref{sec:examples}. Let us introduce a functional operator $L$ whose action on test functions
$V$ and $H$ is given by
\begin{multline}\label{eq:def-L}
L (V,H) (T,\vp,a)= \sup_{t \in [0,T]} \E^{\vp,a}\bigg[\int_0^{t \wedge \sigma_1}\! \e^{-\rho
s}C(\vP_s,a)\, ds \\ +1_{\{t <\sigma_1\}}\e^{-\rho t}H(T-t,\vP_t,a) +\e^{-\rho \sigma_1}1_{\{t
\geq \sigma_1\}} V(T-\sigma_1,\vP_{\sigma_1},a)\bigg].
\end{multline}
Observe that $L$ is clearly monotone in both of its function arguments. Moreover, we have
\begin{equation}\label{eq:L-optimal-stopping}
\begin{split}
L (V,H) (T,\vp,a)&= \sup_{\tau \in \mathcal{S}(T)} \E^{\vp,a}\bigg[\int_0^{\tau \wedge
\sigma_1}\!\e^{-\rho s}C(\vP_s,a) \,ds+1_{\{\tau<\sigma_1\}}\e^{-\rho t}H(T-\tau,\vP_{\tau},a)
\\&+\e^{-\rho \sigma_1}1_{\{\tau \geq \sigma_1\}} V(T-\sigma_1,\vP_{\sigma_1},a)\bigg],
\end{split}
\end{equation}
which follows as a result of the characterization of the stopping times of piecewise
deterministic Markov processes (Theorem T.33 \cite{bremaud}, and Theorem A2.3 \cite{davis93})
which state that for any $\tau \in \S(T)$, $\tau \wedge \sigma_1 = t\wedge \sigma_1$ for some
constant $t$.

Let us introduce another monotone functional operator by $$\hat{L}V \triangleq L(V,\M V).$$
\begin{prop}\label{eq:hat-wn} $U$ is the smallest fixed point of $\hat{L}$ that is larger than $U_0$. Moreover, the
following sequence which is constructed by iterating $\hat{L}$,
\begin{equation}\label{def:W}
W_0 \triangleq U_0, \quad W_{n+1} \triangleq \hat{L} W_n, \quad n \in \mathbb{N},
\end{equation}
satisfies $W_n \nearrow U$ (pointwise).
\end{prop}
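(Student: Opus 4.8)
The plan is to prove Proposition \ref{eq:hat-wn} in three movements, paralleling the structure already established for the operator $\G$ in Propositions \ref{prop:UneqVn}--\ref{prop:dp}. First I would establish that the iterates $(W_n)$ from \eqref{def:W} form an increasing sequence, exactly as in Lemma \ref{lem:incr}: since $\hat L V = L(V, \M V)$ is monotone in its (single) argument, and since $W_1 = \hat L W_0 = L(U_0, \M U_0) \ge U_0$ follows by taking $t = T$ (equivalently $\tau \equiv T$) in \eqref{eq:L-optimal-stopping}, which collapses the right-hand side to $\E^{\vp,a}[\int_0^{T\wedge\sigma_1}\e^{-\rho s}C(\vP_s,a)\,ds + \e^{-\rho\sigma_1}1_{\{\sigma_1\le T\}}U_0(T-\sigma_1,\vP_{\sigma_1},a)]$, and the tower property applied to the definition \eqref{def:U-0} of $U_0$ recovers exactly $U_0(T,\vp,a)$. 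Monotonicity of $\hat L$ then propagates $W_n \le W_{n+1}$ by induction, so the pointwise limit $W_\infty \triangleq \lim_n W_n$ exists.

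The heart of the argument is to identify $W_\infty$ with $U$. The natural strategy is to show that $W_\infty$ is a fixed point of $\hat L$ lying above $U_0$, and then invoke a minimality/uniqueness argument analogous to Step 2 of Proposition \ref{prop:dp}. For the fixed-point property, I would argue $\hat L W_\infty = W_\infty$ by sandwiching: monotonicity gives $\hat L W_n \le \hat L W_\infty$, so $W_\infty = \lim_n W_{n+1} \le \hat L W_\infty$; the reverse inequality uses that the supremum over $t \in [0,T]$ in \eqref{eq:def-L} together with monotone convergence lets one pass the limit inside the expectation, so $\hat L W_\infty \le \lim_n \hat L W_n = W_\infty$. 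To see that $W_\infty = U$, I would run two inclusions. For $W_\infty \le U$, I would check that each $W_n$ is dominated by $U$: this is an induction where the inductive step interprets the right-hand side of $\hat L W_n = L(W_n, \M W_n)$ as the value of a strategy that waits until a deterministic time $t$ or the first jump $\sigma_1$, takes the best immediate switch $\M W_n$ if it stops before $\sigma_1$, and otherwise continues with value $W_n \le U$ after the jump; since $U$ satisfies the dynamic programming equation \eqref{eq:dyn-prog} of Proposition \ref{prop:dp}, the strong Markov property of $\vP$ shows this quantity cannot exceed $U$. For the reverse $W_\infty \ge U$, I would show $W_\infty$ itself satisfies the $\G$-fixed-point equation $\G W_\infty = W_\infty$ and dominates $U_0$, and then appeal to the minimality clause of Proposition \ref{prop:dp}, which forces $W_\infty \ge U$.

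The crucial bridge in that last step is the identity $\G V = \hat L V$ whenever $V$ is itself a $\hat L$-fixed point — or more precisely, that a fixed point of $\hat L$ above $U_0$ is also a fixed point of $\G$. This is exactly the content of the equivalence \eqref{eq:L-optimal-stopping}: the characterization of stopping times of piecewise-deterministic Markov processes (Theorem T33 of \cite{bremaud}, Theorem A2.3 of \cite{davis93}) states that every $\tau \in \S(T)$ agrees with a constant time up to the first jump, $\tau \wedge \sigma_1 = t \wedge \sigma_1$. Consequently the single-jump optimization encoded in $\hat L$, when iterated across successive jumps via the strong Markov property, reconstructs the full optimal stopping problem defining $\G$. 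Making this reconstruction rigorous — unwinding $\hat L V = V$ at the first jump, then reapplying it at $\sigma_1$, and showing the resulting expression telescopes into the unrestricted stopping supremum of \eqref{def:G} with $\M V$ as terminal payoff — is where I expect the main technical obstacle to lie, since one must control the countably many jump epochs simultaneously and justify interchanging the supremum, the infinite sum over jumps, and the expectation. The finiteness-of-switches assumption \eqref{eq:sum-expe-sw-cst} and the uniform bound $|C| \le c$ supply the domination needed to apply monotone and dominated convergence at the relevant points, just as in the proof of Proposition \ref{prop:vnconvU}.
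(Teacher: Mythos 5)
Your proposal follows essentially the same route as the paper's proof: monotone iterates whose limit $W$ is shown (via monotone convergence/interchange of suprema) to be the smallest fixed point of $\hat L$ above $U_0$; the inequality $W \le U$ obtained from $U$ being a (super-)fixed point of $\hat L$, which follows from $U = \G U$ and the strong Markov property; and the inequality $W \ge U$ obtained by showing $W$ is a fixed point of $\G$ and invoking the minimality clause of Proposition \ref{prop:dp}. The ``bridge'' you flag as the main technical obstacle is resolved in the paper exactly by the iteration you sketch: one sets $u_0 = \M W$, $u_{n+1} = L(u_n, \M W)$, identifies $u_n$ with the stopping problem whose stopping times are capped at the $n$-th jump $\sigma_n$ (so that $u_n \nearrow \G W$), and then a short monotonicity induction using $W = L(W,\M W)$ gives $W \ge u_n$ for all $n$, hence $W \ge \G W$, with the reverse inequality $W \le \G W$ following from $\G W_n \ge \hat L W_n = W_{n+1}$.
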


\begin{proof}
\textbf{Step 1.} Recall that $\hat{L}$ is a monotone operator and that
\begin{align*}
W_1(T,\vp,a) = L(U_0, \M U_0)(T,\vp, a) & \ge \E^{\vp,a}\bigg[\int_0^{T \wedge
\sigma_1}\!\e^{-\rho s}C(\vP_s,a)
\,ds+\e^{-\rho \sigma_1}1_{\{T \geq \sigma_1\}} U_0(T-\sigma_1,\vP_{\sigma_1},a)\bigg] \\
 &= U_0(T, \vp, a) = W_0(T,\vp,a).
 \end{align*}
Therefore $(W_n)_{n \in \mathbb{N}}$ is an increasing sequence of functions. Denote the
pointwise limit of this sequence by $W=\sup_n W_n$. This limit is a fixed point of $\hat{L}$:
\begin{equation}\label{eq:shwfxdppt}
\begin{split}
W(T,\vp,a)&=\sup_{n \in \N} W_{n}(T,\vp,a)
\\&=\sup_{n \in \N}\sup_{t \in [0,T]}\E^{\vp,a}\bigg[\int_0^{t \wedge \sigma_1}\e^{-\rho s}C(\vP_s,a)ds+1_{\{t <\sigma_1\}}\e^{-\rho t}\M W_{n-1}(T-t,\vP_t,a)
\\&\qquad +\e^{-\rho \sigma_1}1_{\{t \geq \sigma_1\}} W_{n-1}(T-\sigma_1,\vP_{\sigma_1},a)\bigg]
\\&=\sup_{t \in [0,T]}\sup_{n \in \N}\E^{\vp,a}\bigg[\int_0^{t \wedge \sigma_1}\e^{-\rho s}C(\vP_s,a)ds+1_{\{t <\sigma_1\}}\e^{-\rho t}\M W_{n-1}(T-t,\vP_t,a)
\\&\qquad +\e^{-\rho \sigma_1}1_{\{t \geq \sigma_1\}} W_{n-1}(T-\sigma_1,\vP_{\sigma_1},a)\bigg]
=\hat{L} W (T,\vp,a),
\end{split}
\end{equation}
where the last line follows from the monotone convergence theorem. In fact it is the smallest
of the fixed points of $\hat{L}$ that is greater than $U_0=W_0$, which is a result of the
following induction argument: suppose that $\tilde{W} \ge U_0$ is another such fixed point.
Then $\tilde{W}=\hat{L} \tilde{W} \geq \hat{L} U_0=W_1$. On the other hand, if $\tilde{W} \geq
W_n$, then $\tilde{W}=\hat{L} \tilde{W} \geq \hat{L} W_n=W_{n+1}$. Now taking the supremum of
both sides we have that $\tilde{W} \geq W$.

\textbf{Step 2.} We will now show that $W$ is a fixed point of $\G$, hence $W \geq U$ as a
result of Proposition~\ref{prop:dp}. First, we will show that $W \geq \G W$. Let us construct
an increasing sequence of functions by $u_0=\M W$, $u_{n+1}=L(u_n,\M W)$, $n \in \N$. It can be
shown that $u_n$  can be written as
\begin{equation}\label{eq:u-n}
u_{n}(T,\vp,a)=\sup_{\tau \in \mathcal{S}(T)} \E^{\vp,a} \left[ \int_0^{\tau \wedge \sigma_n}
\!\!\e^{-\rho s} C(\vP_s, a) \, ds + \e^{-\rho\, \tau \wedge \sigma_n } \M W(T-(\tau \wedge
\sigma_n), \vP_{\tau \wedge \sigma_n}, a) \right],
\end{equation}
see e.g.\ Proposition 5.5 in \cite{bdk05}. Taking $n \to\infty$ we find that the monotone limit
$u = \lim_{n\uparrow \infty} u_n$ satisfies $u = \G W$. Now, we can show that $W \geq \G W$
using induction. From step 1, we know that $W= L(W, \M W)$, therefore $W \geq \M W=u_0$ (since
stopping immediately may not be optimal in \eqref{eq:L-optimal-stopping}). On the other hand,
if $W \geq u_n$, then since $L(\cdot, \M W)$ is a monotone operator, we have that $W=L(W, \M
W)\geq L(u_n, \M W)=u_{n+1}$. This implies that $W \geq u_n$ for all $n \in \N$. Therefore, $W
\geq \G W =\sup_{n} u_n$.

Let us show the reverse inequality: $W \leq \G W$. As a result of the monotone convergence
theorem we have that $\G W = \sup_{n \in \N} \G W_n$. Clearly $\G W_n \geq \hat{L} W_n$ since
$W_n \geq \M W_{n-1}$, and the set of stopping times that we are taking a sup over is smaller
than $\mathcal{S}(T)$. Therefore, $\G W_n \geq W_{n+1}$. Since we can repeat this argument for
all $n$, $$\G W =\sup_{n \in \N}\G W_n \geq \sup_{n \in \N} W_{n+1}=W.$$

\textbf{Step 3.} We will now show that $W \leq U$ (which together with the result of step 2,
shows that $W=U$). On the one hand, using the strong Markov property of $(\vP,\xi)$, the value
function $U$ can be shown to be a fixed point of $\hat{L}$ (see Proposition 5.6 in
\cite{bdk05}): recall that $U = \G U$ (the right-hand-side of which is an optimal stopping
problem) and compare with \eqref{eq:L-optimal-stopping}. On the other hand, from step 1 we know
that $W$ is the smallest fixed point of $\hat{L}$ greater than $U_0$. But this implies that $U
\geq W$.
\end{proof}

\begin{rem}
As a result of Fubini's theorem and using \eqref{eq:jumps-of-vP} and \eqref{def:m} we can write
$\hat{L}$ as
\begin{multline}
\label{eq:J-expectations} \hat{L} V (T,\vp,a)=  \sup_{0\leq t \leq T} \bigg\{\bigg(\sum_{i \in
E} m_i(t,\vp)  \bigg)
 \cdot \e^{- \rho t } \M
V\left(T-t,\vx( t, \vp ), a \right) \\
+  \int_{0}^{t} \e^{- \rho u}     \sum_{i \in E} m_i(u,\vp) \cdot \Bigl(  C(\vx(u, \vp), a) +
\lambda_i \cdot S_i V(T-u, \vx(u, \vp), a) \Bigr) du\bigg\},
\end{multline}
in terms of the operator
\begin{align}
\label{def:S} S_i w(t, \vp, a) \triangleq \int_{\R^d} w \left(t, \left(\, \frac{  \lambda_1
f_1(y) \pi_1 }{ \sum_{j \in E} \lambda_j f_j(y) \pi_j }, \ldots, \frac{  \lambda_m f_m(y) \pi_m
}{ \sum_{j \in E} \lambda_j f_j(y) \pi_j } \right), a\right) f_i(y) \nu(dy), \quad \text{for $i
\in E$.}
\end{align}
This implies that one can numerically compute $\hat{L}V$ by performing the \emph{deterministic}
optimization on the right-hand-side of \eqref{eq:J-expectations}.
\end{rem}

\section{Regularity of the Value Function and an Optimal Strategy}\label{sec:opt-strategy}
In this section we will analyze the regularity of the value function $U$, which will lead to
the construction of an optimal strategy. This is done by analysis of two auxiliary sequences of
functions converging to $U$. We first begin by studying $U_0$.

\begin{lemm}\label{lemm:u0cont}
The function $U_0$ defined in (\ref{def:U-0}) is convex in $\vp$.
\end{lemm}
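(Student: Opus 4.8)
The plan is to show something slightly stronger than claimed, namely that $U_0$ is \emph{affine} (and hence a fortiori convex) in $\vp$. The whole argument rests on two facts: the running reward $C(\cdot,a)$ is linear, and the expectation of the conditional probability $\Pi_i(t)$ collapses to an unconditional probability of $M$ via the tower property, which in turn is linear in the prior $\vp$.

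First I would use the uniform bound $|C(\vP_t,a)|\le c$ together with $\e^{-\rho t}\le 1$ to justify Fubini's theorem and interchange the time integral with the expectation in \eqref{def:U-0}:
\[
U_0(T,\vp,a) = \int_0^T \e^{-\rho t}\, \E^{\vp,a}\bigl[\,C(\vP_t,a)\,\bigr]\, dt .
\]
Then, writing $C(\vP_t,a)=\sum_{i\in E}c_i(a)\Pi_i(t)$ and recalling the defining property $\Pi_i(t)=\P\{M_t=i\mid\mathcal{F}^X_t\}$ from \eqref{def:Pi-i}, the tower property removes the conditioning:
\[
\E^{\vp,a}\bigl[\Pi_i(t)\bigr] = \E^{\vp,a}\bigl[\,\P\{M_t=i\mid\mathcal{F}^X_t\}\,\bigr] = \P^{\vp}\{M_t=i\},
\]
where the superscript $a$ drops out because the control $\xi$ does not enter the dynamics of $(M,X)$ and so the marginal law of $M_t$ is unaffected by the chosen policy.

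The crucial observation is that, under $\P^{\vp}$, the process $M$ is a Markov chain with generator $Q$ whose law is unchanged by the measure change (as noted after the definition of the stochastic exponential $Z$, since $\P=\P_0$ on $\G_0$) and whose initial distribution is exactly $\vp$. Hence $\P^{\vp}\{M_t=i\}=(\vp\,\e^{tQ})_i$ is linear in $\vp$, and substituting back gives
\[
U_0(T,\vp,a) = \int_0^T \e^{-\rho t}\sum_{i\in E}c_i(a)\,(\vp\,\e^{tQ})_i\, dt ,
\]
which is affine in $\vp$ for each fixed $(T,a)$; affine functions are convex, so the lemma follows.

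There is essentially no serious obstacle here: the only points needing care are the appeal to Fubini (covered by the bound $|C|\le c$) and the assertion that the unconditional marginal of $M_t$ is linear in the prior, which is exactly the content of the tower-property step combined with the fact that the observations $X$ and control $\xi$ do not feed back into $M$. I would note that the argument actually delivers linearity rather than mere convexity; this is the natural base case for the later induction showing that each $V_n$ is convex, where the intervention operator $\M$ and the optimal-stopping structure will only preserve convexity and not linearity.
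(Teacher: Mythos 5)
Your proof is correct, but it takes a genuinely different and more direct route than the paper. You exploit the fact that $C(\cdot,a)$ is \emph{linear} in the belief, so that after Fubini the tower property collapses $\E^{\vp,a}[\Pi_i(t)]$ to $\P^{\vp}\{M_t=i\}=(\vp\,\e^{tQ})_i$, giving that $U_0$ is actually linear in $\vp$ (your two supporting observations are both sound: the law of $M$ is unchanged by the measure change since $\P=\P_0$ on $\G_0$, and the frozen policy $a$ does not feed back into the dynamics of $(M,X)$). The paper instead runs heavier machinery: it introduces the first-jump operator $I w = L(w,0)$, shows the iterates $k_n\nearrow U_0$ \emph{uniformly} (via an estimate on $\E^{\vp}[1/\sigma_n]$), and proves each $k_n$ is convex by showing $I$ maps convex functions to convex functions using the supremum-of-affine representation. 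What the paper's approach buys is reusability: the same operator $I$ and sequence $k_n$ are extended to the larger domain $\tilde{D}$ in Lemma~\ref{lemm:cont-U0} to get Lipschitz continuity of $U_0$ up to the boundary of $D$, and the convexity-preservation argument is invoked again for the iterates $W_n$ in Lemma~\ref{lem:Wconv}, where the functions are genuinely only convex (the intervention operator $\M$ and the suprema over stopping rules destroy linearity). Your argument is the cleaner and stronger statement for $U_0$ itself — and, as you note, it serves perfectly well as the base case of the later induction — but it is specific to the no-action value and cannot replace the paper's convexity-propagation mechanism for the controlled value functions.
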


\begin{proof}
Let us define a functional operator $I$ through its action on a test function $w$ by $I w=
L(w,0)$, that is,
\begin{equation}\label{eq: defn-I}
\begin{split}
Iw(T,\vp,a)&= \E^{\vp,a}\left[\int_0^{\sigma_1\wedge T}\e^{-\rho t}C(\vP_t,a)dt +1_{\{\sigma_1
\leq T\}}\e^{-\rho \sigma_1} w(T-\sigma_1,\vP_{\sigma_1},a)\right]
\\&= \int_0^{T}\e^{-\rho u} \sum_{i \in E} m_{i}(u,\vp) \cdot \left[C(\vec{x}(u,\vp),a)+\lambda_i \cdot S_i w(T-u, \vec{x}(u,\vp), a)\right]du.
\end{split}
\end{equation}
As a result of the strong Markov property of $\vP$ we observe that $U_0$ is a fixed point of $I$, and if we define
\begin{equation}\label{def:k-n}
k_{n+1}(T,\vp,a)= I k_n(T,\vp,a), \quad k_0(T,\vp,a)=0, \quad T \in \R_+, \vp \in D, a \in \A
\end{equation}
then $k_n \nearrow U_0$, see Proposition 1 in \cite{CostaDavis89}.  We will divide the rest of
the proof into two parts. In the first part we will show that $k_n$ converges to $U_0$
uniformly. In the second part we will argue that for all $n \in \N$, $k_n$ is convex. Suppose
both of the above claims have been proved and let $\eps >0$. Then for any  $\vp_1, \vp_2 \in D$
\begin{equation}
\begin{split}
U_0(T,\alpha \vp_1+ (1-\alpha) \vp_1,a)& = U_0(T,\alpha \vp_1+ (1-\alpha) \vp_1,a) -k_n(T,
\alpha \vp_1+(1-\alpha) \vp_2,a) \\ & \qquad +k_n(T, \alpha \vp_1+(1-\alpha) \vp_2, a)\\
&\leq \eps+ \alpha k_n(T,\vp_1,a)+(1-\alpha) k_n(T,\vp_2,a)
\\&\leq 2 \eps + \alpha U_0(T,\vp_1,a)+(1-\alpha)U_0(T,\vp_2,a),
\end{split}
\end{equation}
in which the last two inequalities follow since for $n>N(\eps)$ large enough,
$|U_0(T,\vp,a)-k_n(T,\vp,a)|< \eps$ for all $\vp \in D$. Since $\eps$ was arbitrary the
convexity of $\vp \rightarrow U_0(T,\vp,a)$ follows.

\textbf{Step 1}. Using strong Markov property we can write $k_n$ as (cf. \eqref{eq:u-n})
\begin{equation}
k_n(T,\vp,a)= \E^{\vp}\left[\int_0^{\sigma_n\wedge T}\e^{-\rho t}C(\vP_t,a)\,dt\right].
\end{equation}
As a result,
\begin{equation}\label{eq:kn-U0}
\begin{split}
|U_0(T,\vp,a)&-k_n(T,\vp,a)| \leq  \E^{\vp}\left[1_{\{T>\sigma_n\}}\int_{\sigma_n}^{T}\e^{-\rho
t}|C(\vP_t,a)|dt\right]
\\& \leq \E^{\vp}\left[1_{\{T>\sigma_n\}}\e^{-\rho \sigma_n} c \int_0^{T-\sigma_n}\e^{-\rho t}dt \right]
\\ &\leq c\, T\, \P^{\vp}\{T>\sigma_n\} \\ &\leq cT \E^{\vp}\left[1_{\{T>\sigma_n\}}(T/\sigma_n)\right] \leq  c\,
T^2\cdot  \E^{\vp}\left[1/\sigma_n\right].
\end{split}
\end{equation}

The conditional probability of the first jump satisfies $\P^{\vp}\{\sigma_1>t|M\}=\e^{-I(t)}$.
Therefore,
\begin{equation}\label{eq:fst-est}
\begin{split}
\E^{\vp}\left[\e^{-u \sigma_1}|M\right]=\E^{\vp}\left[\int_{\sigma_1}^{\infty}u\e^{-u t}\ ,dt
\Big| \, M\right] &=\int_0^{\infty}\P^{\vp}\{\sigma_1 \leq t | M \} \cdot u\e^{-ut}\,dt
\\& = \int_0^{\infty}\left[1-\e^{-I(t)}\right] u\e^{-ut}\, dt \\
&\leq \int_0^{\infty}\left[1-\e^{-\bar{\lambda} t}\right]u\e^{-u t}dt =
\frac{\bar{\lambda}}{\bar{\lambda}+u},
\end{split}
\end{equation}
where $\bar{\lambda}=\max_{i \in E}\lambda_i$, see \eqref{def:I-t}. Since the observed process
$X$ has independent increments given $M$, it readily follows that $\E^{\vp}\left[\e^{-u
\sigma_n}|M\right] \leq \bar{\lambda}^n/(\bar{\lambda}+u)^n$, which immediately implies that
\begin{equation*}%\label{eq:sec-set}
\E^{\vp}\left[\e^{-u \sigma_n}\right] \leq
\left(\frac{\bar{\lambda}}{\bar{\lambda}+u}\right)^n.
\end{equation*}
Also, since $1/\sigma_n=\int_0^{\infty}\e^{-\sigma_n u} du$, an application of Fubini's theorem
together with the last inequality yield
\begin{equation}
\E^{\vp} \left[\frac{1}{\sigma_n}\right] \leq \int_0^{\infty}\left(\frac{\bar{\lambda}}{\bar{\lambda}+u}\right)^n du =\frac{\bar{\lambda}}{n-1}, \quad n \geq 2.
\end{equation}
The uniform convergence of $k_n$ to $U_0$ now follows from (\ref{eq:kn-U0}) and
\eqref{eq:fst-est}.

\textbf{Step 2}. Here, we will show that $(k_n)_{n \geq 0}$ is a sequence of convex functions.
This result would follow from an induction argument once we show that the operator $I$ maps a
convex function to a convex function.

Let us assume that $\vp \rightarrow w(T,\vp,a)$ is a convex function for all $T \geq 0$.
Therefore, we can write this convex mapping as $\vp \rightarrow w(T-u,\vp,a)= \sup_{k \in K_u}
\alpha_{k,0}(T-u)+\alpha_{k,1}(T-u)\pi_1+\cdots+\alpha_{k,m}(T-u) \pi_m$, for some constants
$\alpha_{k,j} \in \mathbb{R}$ and countable sets $K_u$. Then using $x_i(t,\vp) =
m_i(t,\vp)/\sum_{j\in E} m_j(t,\vp)$ and the second equality in \eqref{eq: defn-I} we obtain
\begin{equation}
\begin{split}
I w(T,\vp,a)&=\int_0^{T}\e^{-\rho u} \sum_{i \in E} c_{i} m_i(u,\vp) \, du+
\int_{0}^{T}\e^{-\rho u} \sum_{i \in E} \lambda_i m_i(u,\vp) \cdot
\\& \hspace{-0.25in} \cdot\left[\int_{\R^d} \!\sup_{k \in K_u} \left(\alpha_{k,0}(T-u)+\sum_{j \in E} \alpha_{k,j}(T-u)\frac{\lambda_j f_j(y) m_{j}(u,\vp)}{\sum_{l \in E} \lambda_l f_l(y) m_{l}(u,\vp)} \right)f_i(y) \nu(dy)\right]du
\\& =\int_0^{T}\e^{-\rho u} \sum_{i \in E} c_{i} m_i(u,\vp) \, du
\\&+ \int_0^{T}\e^{-\rho u}\left[\int_{\R^d}\sup_{k \in K_u} \left(\sum_{j \in E}[\alpha_{k,j}(T-u)+\alpha_{k,0}(T-u)] \lambda_j
f_j(y)m_j(u,\vp)\right)\nu(dy)\right]du.
\end{split}
\end{equation}
Since $\vp \rightarrow m(u,\vp)$ is linear in $\vp$ (see \eqref{def:m}) and the supremum of
linear functions is convex, the convexity of $\vp \rightarrow I w(T,\vp,a)$ follows.

\end{proof}

\begin{lemm}\label{lemm:cont-U0}
$U_0(T,\vp,a)$ is continuous as a function of its first two variables.
\end{lemm}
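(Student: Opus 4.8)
The plan is to leverage the approximating sequence $(k_n)$ from \eqref{def:k-n} together with the uniform convergence already obtained. In Step~1 of the proof of Lemma~\ref{lemm:u0cont} it was shown that
\[
|U_0(T,\vp,a)-k_n(T,\vp,a)| \le c\,T^2\,\frac{\bar{\lambda}}{n-1},\qquad n\ge 2,
\]
a bound that is uniform in $\vp\in D$ and uniform in $T$ on each interval $[0,T_{\max}]$. Hence $k_n\to U_0$ uniformly on every compact subset of $\R_+\times D$, and since a locally uniform limit of continuous functions is continuous, it suffices to prove that each $k_n$ is jointly continuous in $(T,\vp)$. I would establish this by induction, showing that the operator $I$ of \eqref{eq: defn-I} maps bounded, jointly continuous functions to bounded, jointly continuous functions. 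The base case $k_0\equiv 0$ is immediate, and since $k_n(T,\vp,a)=\E^{\vp}[\int_0^{\sigma_n\wedge T}\e^{-\rho t}C(\vP_t,a)\,dt]$ is bounded by $c\,T$ on compacts, the boundedness hypothesis propagates automatically.

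For the induction step I would use a recombined form of \eqref{eq: defn-I}. Writing $\Phi(y,\vec{m})$ for the vector with $k$-th component $\lambda_k f_k(y) m_k/\sum_{j\in E}\lambda_j f_j(y) m_j$ (the post-jump belief of \eqref{def:S}), and noting that the normalizing constant in $\vx(u,\vp)=\vec{m}(u,\vp)/\sum_j m_j(u,\vp)$ cancels in that transform, one gets
\begin{multline*}
Iw(T,\vp,a)=\int_0^{T}\e^{-\rho u}\sum_{i\in E} c_i(a)m_i(u,\vp)\,du\\
+\int_0^{T}\e^{-\rho u}\int_{\R^d} w(T-u,\Phi(y,\vec{m}(u,\vp)),a)\sum_{i\in E}\lambda_i f_i(y)m_i(u,\vp)\,\nu(dy)\,du.
\end{multline*}
Here $(u,\vp)\mapsto\vec{m}(u,\vp)=\vp\cdot\e^{u(Q-\Lambda)}$ is continuous by \eqref{def:m}, and so are $\sum_i c_i(a)m_i(u,\vp)$ and the weights $\lambda_i f_i(y)m_i(u,\vp)$.

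The delicate point, and the \emph{main obstacle}, is the joint continuity in $(u,\vp)$ of the inner $\nu$-integral, because $\Phi(y,\cdot)$ is discontinuous (indeed undefined) at those $\vp$ for which the denominator $\sum_j\lambda_j f_j(y)m_j(u,\vp)$ vanishes on a positive-$\nu$-measure set of $y$, which occurs precisely at boundary beliefs. The resolution is the cancellation built into the formula: at any such $y$ the accompanying weight $\sum_i\lambda_i f_i(y)m_i(u,\vp)$ is itself zero, so, extending the integrand by $0$ there, the product $w(T-u,\Phi(y,\vec{m}(u,\vp)),a)\sum_i\lambda_i f_i(y)m_i(u,\vp)$ is continuous in $(u,\vp)$ for $\nu$-a.e. $y$ — away from the zero set the weight is bounded below and $\Phi$, $w$ are continuous, while on approach to it the bounded factor $w$ is killed by the vanishing weight. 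Since $|w|$ is bounded by its sup and $\int_{\R^d}\sum_i\lambda_i f_i(y)m_i(u,\vp)\,\nu(dy)=\sum_i\lambda_i m_i(u,\vp)\le\bar{\lambda}$, dominated convergence yields continuity of the inner integral, so the full $du$-integrand $g(u,T,\vp)$ is jointly continuous and bounded on compacts. Finally, continuity of $(T,\vp)\mapsto\int_0^T g(u,T,\vp)\,du$ follows by splitting off $\int_{T_0}^{T} g\,du$ (bounded by $\sup|g|\cdot|T-T_0|$) and applying dominated convergence to $\int_0^{T_0}|g(u,T,\vp)-g(u,T_0,\vp_0)|\,du$. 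This closes the induction, and combined with the locally uniform convergence $k_n\to U_0$ completes the proof.
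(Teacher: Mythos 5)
Your proof is correct, but it follows a genuinely different route from the paper's. The paper proves more than the stated continuity: it shows $\vp \mapsto U_0(T,\vp,a)$ is \emph{Lipschitz} on all of $D$ by exploiting the convexity established in Lemma~\ref{lemm:u0cont} — since convexity only gives local Lipschitz continuity in the interior, the paper extends the operator $I$ to a strictly larger domain $\tilde{D}=\{\vec{p}\in\R^m_+ : \sum_i p_i \le 2\}$, shows the extended iterates converge to a convex extension $\tU$ of $U_0$, and concludes that $U_0$ is Lipschitz on the compact $D$ sitting inside $\tilde D$; Lipschitz continuity in $T$ (uniformly in $\vp$) then follows from the elementary bound $|U_0(T,\vp,a)-U_0(S,\vp,a)|\le c|T-S|$, and the two are combined by a triangle inequality. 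You instead keep the original domain and prove plain joint continuity: locally uniform convergence $k_n \to U_0$ (correctly borrowed from Step 1 of Lemma~\ref{lemm:u0cont}) plus an inductive argument that $I$ preserves bounded joint continuity, where your key observation — that the Bayes transform $\Phi(y,\cdot)$ degenerates at boundary beliefs exactly where its multiplying weight $\sum_i \lambda_i f_i(y) m_i(u,\vp)$ vanishes, so the product extends continuously by zero — is the correct resolution of the only delicate point, and is in fact the boundary issue the paper's extension trick is designed to sidestep. What each approach buys: the paper's argument produces quantitative Lipschitz moduli $R(T,a)$, $\tilde R(a)$ that are reused later (the proof of Lemma~\ref{lem:U-lips} for $U$ explicitly ``parallels Step 1'' of this proof), whereas your argument yields only continuity — enough for the statement and for Dini's theorem in Corollary~\ref{cor:cont}, but not a drop-in replacement for the later Lipschitz claims; on the other hand, yours avoids the convexity/extension machinery entirely. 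One small imprecision: for the dominated convergence step you cite the bound $\int \sum_i \lambda_i f_i(y) m_i(u,\vp)\,\nu(dy)\le \bar\lambda$, which bounds the integrals but is not itself a dominating function; since $m_i(u,\vp)\le 1$ you should instead dominate pointwise by $\sup|w|\cdot\bar\lambda \sum_i f_i(y)$, which is $\nu$-integrable because each $\nu_i$ is a probability measure. This is cosmetic and does not affect the validity of the argument.
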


\begin{proof}
The proof will be carried out in two parts. In the first part we will show that  $\vp \rightarrow U_0(T,\vp,a)$, is Lipschitz on $D$. In the second part we will show that $T \rightarrow U_0(T,\vp,a)$ is Lipschitz uniformly in $\vp$. But these two imply that $(T,\vp) \rightarrow U_0(T,\vp,a)$ is continuous for all $a \in \A$ since
\begin{equation}\label{eq:cont}
\begin{split}
|U_0(T,\vp,a)-U_0(S,\vec{p},a)|&=|U_0(T,\vp,a)-U_0(T,\vec{p},a)+U_0(T,\vec{p},a)-U_0(S,\vec{p},a)|
\\& \leq R(T,a) |\vp-\vec{p}|+ \tilde{R}(a) |T-S|, \;\quad \vp, \vec{p} \in D;\, T, S \in \R_+,
\end{split}
\end{equation}
in which $R(T,a)$ and $\tilde{R}(a)$ are the Lipschitz constants above.

\textbf{Step 1}. The idea is to use the convexity of $U_0$. Unfortunately, the convexity of
$\vp \rightarrow U_0(T,\vp,a)$ implies that this function is Lipschitz only in the
\emph{interior} of $D$. In what follows we will show that $\vp \rightarrow U_0(T,\vp,a)$ is the
restriction of a convex function $\vp \rightarrow \tU(\vp)$ whose domain is strictly larger
than $D$, which implies the Lipschitz continuity of $\vp \rightarrow U_0(T,\vp,a)$ also on the
boundary of the region $D$. To this end let us define the functional operator $\tilde{I}$
through its action on a test function $w$ as
\begin{equation*}
\tilde{I} w (T, \vec{p}, a)=\int_0^{T}\e^{-\rho u} \sum_{i \in E} m_{i}(u,\vp) \cdot
\left[C(\vec{x}(u,\vec{p}),a)+\lambda_i \cdot S_i w(T-u, \vec{x}(u,\vec{p}), a)\right]du,
\end{equation*}
for $\vec{p} \in \tilde{D}, T \in \R_+, a \in \A$ in which
\begin{equation*}
\tilde{D}= \left\{\vec{p} \in \mathbb{R}^m_+ \colon \sum_{i \in E} p_i \leq 2 \right\}.
\end{equation*}
Note that $\tilde{I}$ is nothing but an extension of the operator $I$ we defined in the proof
of Lemma~\ref{lemm:u0cont}. Let us define
\begin{equation*}
\tilde{k}_{n+1}(T,\vec{p},a)= \tilde{I} \tilde{k}_n(T,\vec{p},a), \quad
\tilde{k}_0(T,\vec{p},a)=0, \qquad T \in \R_+, \vec{p} \in \tilde{D}, a \in \A.
\end{equation*}
Using the very same arguments as in the proof of Lemma~\ref{lemm:u0cont}, we can show that
$\vec{p}\rightarrow \tilde{k}_n(T,\vec{p},a)$ is convex for all $n$, and this sequence of
functions uniformly converges to a convex limit $\vec{p} \rightarrow \tU(T,\vec{p},a)$.
Clearly, $\tilde{k}_n(T,\vec{p}, a)=k_n(T,\vec{p},a)$ when $\vec{p} \in D$. As a result
$U_0(T,\vec{p},a)=\tU(T,\vec{p},a)$ on $D$. Since $\tU(T,\vec{p},a)$ is locally Lipschitz in
the interior of $\tilde{D}$ (as a result of its convexity), we see that $\vec{p} \rightarrow
U_0(T,\vec{p},a)$ is Lipschitz on the compact domain $D$.

\textbf{Step 2}. The Lipschitz property of $U_0$ with respect to time (uniformly in $\vp$)
follows from
\begin{equation}
|U_0(T,\vp,a)-U_0(S,\vp,a)| \leq \E^{\vp,a} \left[ \int_S^{T}\e^{-\rho t} |C(\vP_t,a)| \, dt
\right] \leq c |T-S|.
\end{equation}
\end{proof}

\begin{lemm}\label{lem:Wconv}
For all $a \in \A$, $T \in \R_+$, $(W_n(T,\vp,\cdot))_{n \in \N}$, defined in $\eqref{def:W}$,
form a sequence of convex functions. Moreover, for  each $a \in \A$ and $n \in \N$, the
function $(T,\vp) \rightarrow W_n(T,\vp,a)$ is continuous.
\end{lemm}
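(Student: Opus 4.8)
The plan is to argue by induction on $n$, establishing simultaneously that $\vp \mapsto W_n(T,\vp,a)$ is convex and that $(T,\vp)\mapsto W_n(T,\vp,a)$ is jointly continuous. The base case $n=0$ is exactly Lemmas~\ref{lemm:u0cont} and \ref{lemm:cont-U0}, since $W_0=U_0$. For the inductive step it suffices to show that the operator $\hat{L}=L(\cdot,\M\cdot)$ preserves both properties, and for this I would work throughout with the explicit representation \eqref{eq:J-expectations}, which exhibits $\hat{L}W_n(T,\vp,a)$ as $\sup_{0\le t\le T} g_n(t,T,\vp,a)$ for an integrand $g_n$ assembled from the linear maps $m_i(\cdot,\vp)$ of \eqref{def:m}, the normalized flow $\vx(\cdot,\vp)$, the intervention term $\M W_n$, and the jump operators $S_i W_n$ of \eqref{def:S}.

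For convexity I would first note that $\M W_n(T,\cdot,a)=\max_{b\neq a}\{W_n(T,\cdot,b)-K(a,b,\cdot)\}$ is convex, being a finite maximum of convex functions (each $W_n(T,\cdot,b)$ is convex by hypothesis and $K(a,b,\cdot)$ is affine by \eqref{def:C}). I would then fix $t$ and show the bracketed expression in \eqref{eq:J-expectations} is convex in $\vp$, reusing the mechanism of Step~2 of Lemma~\ref{lemm:u0cont}: writing the convex functions $\M W_n(T-t,\cdot,a)$ and $W_n(T-u,\cdot,a)$ as suprema of functions affine in their argument, substituting $\vx=\vec{m}/\sum_j m_j$ and the jump map from \eqref{def:S}, and absorbing the positive prefactor $\sum_i m_i(t,\vp)$ into the supremum (so that $(\sum_i m_i)\sup_k[\beta_{k,0}+\sum_j\beta_{k,j}m_j/\sum_l m_l]=\sup_k[\beta_{k,0}\sum_i m_i+\sum_j\beta_{k,j}m_j]$). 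Since every $m_i(\cdot,\vp)$ is linear in $\vp$, each resulting expression is a supremum of functions linear in $\vp$, hence convex; the $C$-term contributes only the linear $\int_0^t\e^{-\rho u}\sum_i c_i(a)\,m_i(u,\vp)\,du$. Finally, taking $\sup_{0\le t\le T}$ preserves convexity in $\vp$, closing the convexity induction.

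For continuity I would show $\hat{L}$ maps jointly continuous and bounded functions to jointly continuous functions; boundedness is automatic since $U_0=W_0\le W_n\le U$ with $U_0$ and $U$ uniformly bounded on compact $T$-intervals. Assuming $W_n$ jointly continuous, $\M W_n$ is too, and each $S_i W_n(T-u,\vx(u,\vp),a)$ is continuous in $(u,\vp)$ by dominated convergence, using the boundedness of $W_n$, the integrable domination of $|W_n(\cdot)|f_i(y)$, the continuity of $\vp\mapsto\vx(u,\vp)$, and continuity of the jump map for $\nu$-a.e.\ $y$. Hence $g_n$ is jointly continuous on $\{0\le t\le T\}\times\R_+\times D$, and $W_{n+1}=\sup_{0\le t\le T}g_n$ is continuous in $(T,\vp)$ by Berge's maximum theorem, the correspondence $T\mapsto[0,T]$ being compact-valued and continuous. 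To control the behaviour at $\partial D$, where the convexity-based Lipschitz bound would otherwise degenerate, I would, as a safeguard, repeat the domain-extension device of Step~1 of Lemma~\ref{lemm:cont-U0}: extend $\hat{L}$ to the enlarged simplex $\tilde{D}=\{\vec p\in\R^m_+:\sum_i p_i\le 2\}$, verify the extended iterates remain convex and agree with $W_n$ on $D$, and deduce Lipschitz continuity of $\vp\mapsto W_n(T,\vp,a)$ up to the boundary from convexity on the interior of $\tilde{D}$.

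The main obstacle I anticipate is the continuity of the jump operators $S_i$ at the boundary of $D$: there the normalization $\sum_j\lambda_j f_j(y)\pi_j$ appearing in \eqref{def:S} can vanish on a set of $y$ that is not $\nu$-null, so one must check that the jump map is well defined and continuous for $\nu$-a.e.\ $y$ and that dominated convergence applies uniformly as $\vp\to\partial D$. This is precisely where the interaction between the $t$-supremum (with its $T$-dependent range and the time shift $T-t$ buried inside $\M W_n$) and the integral term must be handled with care in order to produce a modulus of continuity that is uniform in $\vp$.
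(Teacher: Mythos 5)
Your convexity half is correct and is precisely the paper's argument: the paper proves it by induction, reusing the mechanism of Step~2 of Lemma~\ref{lemm:u0cont} (write the convex functions as countable suprema of affine functions, use linearity of $\vp \mapsto \vec{m}(u,\vp)$, and absorb the positive factor $\sum_i m_i$ into the supremum so that everything becomes a supremum of functions linear in $\vp$), together with the easy observation that $\M$ preserves convexity since $K(a,b,\cdot)$ is affine. Nothing to add there.

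The continuity half, however, has a concrete gap as written. Your primary route (joint continuity of the integrand in \eqref{eq:J-expectations} plus Berge's maximum theorem) hinges on exactly the point you flag and leave open, namely the behaviour of $S_i W_n(T-u,\vx(u,\vp),a)$ as $\vp \to \partial D$; and your fallback (the $\tilde{D}$-extension of Step~1 of Lemma~\ref{lemm:cont-U0}) produces only Lipschitz continuity in $\vp$ for each fixed $T$, which says nothing about the $T$-direction. So neither route, as proposed, yields joint continuity. The paper closes this circle differently and never touches the boundary issue: it combines (i) the $\vp$-Lipschitz bound from the convexity/domain-extension device --- your fallback --- with (ii) Lipschitz continuity of $W_n$ in $T$ \emph{uniformly} in $\vp$, proved by the strategy-comparison argument of Lemma~\ref{lem:U-lips-T} (compare an $\eps$-optimal control for horizon $S$ with its truncation to horizon $T$; since running rewards are bounded by $c$ and switching costs are positive, the gap is at most $c|T-S|$), and then concludes by the triangle inequality as in \eqref{eq:cont}. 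This uniform $T$-estimate is the missing piece in your proposal. For what it is worth, your Berge route can in fact be repaired: $S_i$ enters \eqref{eq:J-expectations} only multiplied by $\lambda_i m_i(u,\vp)$, and since $S_i$ integrates against $f_i\,d\nu = d\nu_i$, on the set $\{m_i>0\}$ the normalization $\sum_l \lambda_l f_l(y)\,x_l(u,\vp) \ge \lambda_i f_i(y)\,x_i(u,\vp)$ is positive for $\nu_i$-a.e.\ $y$, so dominated convergence applies; while on $\{m_i=0\}$ the whole term vanishes and the uniform bound $|S_i W_n| \le \|W_n\|_\infty$ gives continuity of the product. But the paper's route is shorter precisely because it never needs any continuity of the jump operators near $\partial D$.
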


\begin{proof}
The proof of the convexity of $\vp \rightarrow W_{n}(T,\vp,a)$ is similar to the proof of
convexity of $\vp \rightarrow k_n(T,\vp,a)$, which is defined in the proof of
Lemma~\ref{lemm:u0cont}, see Part II of that proof.

The continuity proof on the other hand parallels the continuity proof  for $(T,\vp) \rightarrow
U_0(T,\vp,a)$ which we carried out above. The proof of the uniform Lipschitz continuity of
$W_n$ with respect to time is similar to the corresponding proof for $U$ in Lemma
\ref{lem:U-lips-T} below.
\end{proof}

\begin{rem}
The value function $U$ is convex in $\vp$, since as a function of $\vp$, $U$ is the upper
envelope of convex functions $(W_n)$.
\end{rem}

\begin{lemm}\label{lem:U-lips}
The value function $U$ is Lipschitz continuous in $\vp$,
\begin{equation}
|U(T,\vp_1,a)-U(T,\vp_2,a)| \leq R(T,a) |\vp_1-\vp_2|, \qquad \vp_1, \vp_2 \in D;\, T \leq T_0;
\, a \in \A,
\end{equation}
where the positive constant $R$ depends on $T$ and $a$.
\end{lemm}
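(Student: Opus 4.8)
The plan is to follow the route already used for $U_0$ in Lemma~\ref{lemm:cont-U0}, exploiting the fact that $U(T,\cdot,a)$ is \emph{already known to be convex} (by the Remark preceding this lemma, $U$ is the increasing limit, hence the upper envelope, of the convex functions $W_n$ of Lemma~\ref{lem:Wconv}). A finite convex function is locally Lipschitz only on the \emph{interior} of its domain, so the entire difficulty is to propagate Lipschitzness up to the relative boundary of the simplex $D$. As in Lemma~\ref{lemm:cont-U0}, I would do this by realizing $U(T,\cdot,a)$ as the restriction to $D$ of a function that is convex on the strictly larger domain $\tilde{D}=\{\vec{p}\in\R^m_+\colon\sum_i p_i\le 2\}$.

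Concretely, I would extend the first-jump operator $\hat{L}=L(\cdot,\M\cdot)$ to $\tilde{D}$ exactly as the operator $\tilde{I}$ extended $I$ in Lemma~\ref{lemm:cont-U0}: I replace the probabilistic definition \eqref{def:W} by the analytic form \eqref{eq:J-expectations}, in which $\vec{m}(u,\vec{p})=\vec{p}\,\e^{u(Q-\Lambda)}$ and $\vx(u,\vec{p})=\vec{m}(u,\vec{p})/\sum_j m_j(u,\vec{p})$ make perfect sense for $\vec{p}\in\tilde{D}$. Denote the resulting operator $\tilde{\hat{L}}$, set $\tilde{W}_0\triangleq\tU$ (the convex extension of $U_0$ from Lemma~\ref{lemm:cont-U0}) and $\tilde{W}_{n+1}\triangleq\tilde{\hat{L}}\tilde{W}_n$. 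By construction $\tilde{W}_n=W_n$ on $D$; the monotonicity argument of Proposition~\ref{eq:hat-wn} shows $\tilde{W}_n$ is increasing, and since $\tilde{\hat{L}}$ preserves degree-one homogeneity one has $\tilde{W}_n(T,\vec{p},a)=(\sum_i p_i)\,W_n(T,\vec{p}/\sum_i p_i,a)$, so $|C|\le c$ gives a uniform bound on $\tilde{D}$. The monotone pointwise limit $\tilde{U}\triangleq\lim_n\tilde{W}_n$ therefore exists, is finite on $\tilde{D}$, and equals $U$ on $D$.

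The next step is to show each $\tilde{W}_n(T,\cdot,a)$ is convex on $\tilde{D}$, by the induction of Part~II of Lemma~\ref{lemm:u0cont} and of Lemma~\ref{lem:Wconv}: write a convex $\tilde{W}_n$ as a supremum of affine functions of $\vec{p}$, use that $\vec{p}\mapsto\vec{m}(u,\vec{p})$ is linear, pull the supremum out through the nonnegative integrands of \eqref{eq:J-expectations}, and note that the outer supremum over the stopping level together with the maximum defining $\M$ (whose penalty $K(a,b,\cdot)$ is linear) all preserve convexity. As a pointwise limit of convex functions, $\tilde{U}(T,\cdot,a)$ is then convex on $\tilde{D}$, hence locally Lipschitz on $\interior\tilde{D}$, giving the bound $R(T,a)$ on every compact subset of the relative interior of $D$, uniformly for $T\le T_0$.

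The step I expect to be the real obstacle is obtaining Lipschitz continuity up to the faces of $D$ where some coordinate vanishes: convexity alone only controls slopes on compact subsets of the relative interior, and the local Lipschitz constant of $\tilde{U}$ can \emph{a priori} blow up as one approaches such a face. The radial enlargement to $\tilde{D}$ gives room only in the $\sum_i p_i$ direction, so it does not by itself rule out a blow-up at these faces, and this is exactly where an extra ingredient is needed. The clean way to supply it is the linear-mixture structure $\P^{\vec{p}}=\sum_i p_i\,\P^{e_i}$ of the prior: for every \emph{fixed} admissible $\xi$ the map $\vec{p}\mapsto J^{\xi}(T,\vec{p},a)$ is affine, exhibiting $U$ as a supremum of affine functions whose slopes are the quantities $\E^{e_i}[\,\cdot\,]$. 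Checking that these slopes are bounded by the uniform reward bound $cT$ and the finiteness \eqref{eq:sum-expe-sw-cst} of the expected switching costs is what closes the boundary case and produces a genuine Lipschitz constant $R(T,a)$ on all of the compact set $D$.
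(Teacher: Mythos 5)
Your first three paragraphs are, in substance, the paper's own proof: the paper likewise extends the iterates $W_n$ of \eqref{def:W} to $\tilde{D}$ exactly as $\tilde{I}$ extends $I$ in Lemma~\ref{lemm:cont-U0}, runs the convexity induction of Lemma~\ref{lemm:u0cont}, and concludes from monotone convergence that the limit is convex on $\tilde{D}$ and ``therefore Lipschitz in $\vp$ on the original domain $D$.'' The obstacle you then isolate is genuine --- and it is in fact a lacuna in the paper's own argument, which is silent on this point. Since $\tilde{D}\subset\R^m_+$, every point of $D$ with a vanishing coordinate lies on the topological boundary of $\tilde{D}$, so convexity of the extension yields locally Lipschitz behaviour only on $\interior(\tilde{D})$, whose intersection with $D$ is just the relative interior of $D$: the radial enlargement buys nothing at the faces $\{\pi_i=0\}$. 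No soft argument can bridge this, because $-\sqrt{p_1p_2}$ is finite, convex and positively homogeneous of degree one on $\R^2_+$, yet its restriction $-\sqrt{\pi_1(1-\pi_1)}$ to the one-dimensional simplex fails to be Lipschitz at the endpoints.

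The difficulty is that your proposed repair does not close this gap; it relocates it. The mixture identity $J^{\xi}(T,\vp,a)=\sum_i \pi_i J^{\xi}(T,e_i,a)$ (with $e_i$ the vertices of $D$) is correct, so $U(T,\cdot,a)$ is a supremum of affine functions, and the slopes are bounded \emph{above} by $cT$ because switching costs are positive. But \eqref{eq:sum-expe-sw-cst} is a finiteness requirement imposed strategy by strategy; it gives no bound on $\E^{e_i}\bigl[\sum_k \e^{-\rho\tau_k}K(\cdot)\bigr]$ that is uniform over the strategies entering the supremum. Restricting to strategies that are $1$-optimal for the prior $\vp$ only yields expected switching costs at most $2cT+1$ under $\P^{\vp}$, hence at most $(2cT+1)/\pi_i$ under $\P^{e_i}$, which blows up precisely at the face you are trying to control; and upper slope bounds alone do not suffice, since $-\sqrt{p_1p_2}=\sup_{t>0}\bigl\{-\tfrac{t}{2}p_1-\tfrac{1}{2t}p_2\bigr\}$ is itself a supremum of linear functions with slopes bounded above by $0$. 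So the ``checking'' you defer in your last sentence is exactly the missing mathematical content, not a routine verification. Completing the proof requires a genuinely new ingredient: for instance, showing that near-optimal strategies can be modified, using $K_i(a,b)>k_0$ and the triangle inequality, so that they switch at most a deterministic constant number of times per observation jump of $X$; since the jump intensity is at most $\max_i\lambda_i$ under every $\P^{e_i}$, this bounds the slopes uniformly over strategies and over $i$, and only then does the supremum-of-affine-functions argument produce the asserted constant $R(T,a)$ on all of $D$. Neither your write-up nor the paper supplies this step.
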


\begin{proof}
The proof parallels Step 1 of the proof of Lemma~\ref{lemm:cont-U0}. Again a convex sequence of
functions is constructed, converging upwards to an extension of $U$ on $\tilde{D}$ (each
element in this sequence is an extension of $W_n$ onto the larger domain.). Here, the
convergence is not uniform but monotone. The result still follows since the upper envelope of
convex functions is convex, so that the limit is convex and therefore Lipschitz in $\vp$ on the
original domain $D$.
\end{proof}

\begin{lemm}\label{lem:U-lips-T}
The value function $U$ is continuous in $T$ uniformly in the other variables, namely
\begin{equation}
|U(T,\vp,a)-U(S,\vp,a)| \leq c |T-S|, \quad\text{for any}\quad \vp \in D; \, T , S \in (0,T_0];
\, a \in \A.
\end{equation}
\end{lemm}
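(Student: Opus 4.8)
The plan is to reduce the time-regularity of $U$ to the elementary bound $|C(\vp,a)| \le c$ by a pair of near-optimal strategy comparisons, in the same spirit as Step 2 of Lemma~\ref{lemm:cont-U0} but now keeping track of the switching costs. Assume without loss of generality that $T \ge S$, and fix $\vp \in D$, $a \in \A$. I would establish the two one-sided estimates $U(S,\vp,a) - U(T,\vp,a) \le c(T-S)$ and $U(T,\vp,a) - U(S,\vp,a) \le c(T-S)$ separately; combined with $\rho \ge 0$ (so that $\e^{-\rho t} \le 1$ and $\int_S^T \e^{-\rho t}\,dt \le T-S$) they give the claim.

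For the first inequality, take $\xi \in \U(S)$ that is $\eps$-optimal for the $S$-horizon problem and extend it to $[0,T]$ by inaction: let $\bar\xi \in \U(T)$ agree with $\xi$ on $[0,S]$ and perform no further switches on $(S,T]$. Since no switch is added, the switching-cost term is unchanged, and comparing performance functionals \eqref{def:V} gives $J^{\bar\xi}(T,\vp,a) = J^\xi(S,\vp,a) + \E^{\vp,a}[\int_S^T \e^{-\rho t} C(\vP_t,\bar\xi_t)\,dt]$. The extra running term is bounded in absolute value by $c(T-S)$, so $U(T,\vp,a) \ge J^{\bar\xi}(T,\vp,a) \ge U(S,\vp,a) - \eps - c(T-S)$; letting $\eps \downarrow 0$ yields the estimate.

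For the reverse inequality, take $\xi \in \U(T)$ that is $\eps$-optimal for the $T$-horizon problem and truncate it: let $\tilde\xi \in \U(S)$ be the restriction of $\xi$ to $[0,S]$, retaining exactly the switches $\tau_k \le S$. Then $J^\xi(T,\vp,a) - J^{\tilde\xi}(S,\vp,a)$ equals the leftover running reward $\E^{\vp,a}[\int_S^T \e^{-\rho t} C(\vP_t,\xi_t)\,dt]$ minus the discarded switching costs $\E^{\vp,a}[\sum_{S < \tau_k \le T} \e^{-\rho \tau_k} K(\xi_{\tau_k-},\xi_{\tau_k},\vP_{\tau_k})]$. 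The first piece is at most $c(T-S)$ in absolute value, while the second is nonnegative because $K(a,b,\vp) = \sum_{i \in E} K_i(a,b)\pi_i > 0$. Hence $J^{\tilde\xi}(S,\vp,a) \ge J^\xi(T,\vp,a) - c(T-S)$, giving $U(S,\vp,a) \ge U(T,\vp,a) - \eps - c(T-S)$, and again I let $\eps \downarrow 0$.

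Both constructions plainly produce admissible strategies: extending by inaction adds no switches, and truncation only removes switches, so in each case the number of interventions stays finite and the expected-switching-cost constraint \eqref{eq:sum-expe-sw-cst} is inherited. There is no genuine analytic obstacle here, since the discount factor satisfies $\rho \ge 0$ and the running rate is uniformly bounded by $c$; the only point requiring care is the bookkeeping that correctly matches the running-reward and switching-cost terms under extension versus truncation, together with the observation that positivity of $K$ makes discarding the late switches harmless. Note finally that the restriction $T, S \in (0, T_0]$ is immaterial for this lemma — the bound $c|T-S|$ is uniform in $T$, $\vp$ and $a$ — and is carried along only to match the ambient continuity statement \eqref{eq:cont}.
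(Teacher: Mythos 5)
Your proof is correct and matches the paper's own argument essentially step for step: both directions use $\eps$-optimal strategy comparisons, extending the shorter-horizon strategy by inaction for one inequality and truncating the longer-horizon strategy for the other, with the positivity of the switching costs $K$ justifying the discarded late switches. The only cosmetic difference is the labeling of which horizon is longer and the paper's slightly sharper intermediate bound $\e^{-\rho T}c|T-S|$, which collapses to the same final estimate.
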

\begin{proof}
Fix $S>T$. Let $\xi^S, \xi^T$ be $\eps$-optimal strategies for $U(S,\vp,a)$ and $U(T,\vp,a)$
respectively. Then, taking $\tilde{\xi}^S = \xi^T 1_{[0, T]} + \xi^T_T 1_{(T, S]}$ we have
\begin{align*}
 U(S,\vp,a)-U(T,\vp,a) & \geq J^{\tilde{\xi}^S}(S,\vp,a) - (J^{\xi^T}(T,\vp, a) + \eps) \\
& = \E^{\vp,a}\left[ \int_T^S \e^{-\rho s } C(\vP_s,\xi^T_{T}) \, ds \right] -\eps \geq
-\e^{-\rho T}(S-T) c - \eps.
\end{align*}
On the other hand, using the strong Markov property of $(\vP,\xi^S)$,
\begin{align*}
 U(S,\vp,a)-U(T,\vp,a) & \leq  J^{\xi^S}(S,\vp,a) +\eps - J^{\xi^S \cdot 1_{[0,T]}}(T,\vp,a) \\
& = \E^{\vp,a} \left[ \int_T^{S} \e^{-\rho s} C(\vP_s, \xi^S_{s}) \, ds - \sum_{k \colon \tau_k
> T} \e^{-\rho \tau_k} K(\xi^S_{k-1},\xi^S_k, \vP_{\tau^S_k})
\right] +\eps \\
& \leq \E^{\vp,a} \left[ \e^{-\rho T} \!\int_T^S \e^{-\rho (s-T)} C(\vP_s, \xi^S_{s}) \, ds \right] + \eps \\
& \leq \e^{-\rho T} (S-T) c + \eps,
\end{align*}
Since $\eps$ was arbitrary, we therefore conclude that $|U(T,\vp,a)-U(S,\vp,a)| \leq c |T-S|$
as desired.
\end{proof}

\begin{lemm}
For each $a \in\A$ and $n$, the function $(T,\vp) \rightarrow V_n(T,\vp,a)$ is continuous.
\end{lemm}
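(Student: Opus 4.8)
The plan is to run an induction on $n$ that mirrors the regularity analysis already carried out for $U_0$ and for $U$: I will first show that each $V_n$ is convex in $\vp$, then deduce Lipschitz continuity in $\vp$ on all of $D$ from convexity, and finally establish Lipschitz continuity in $T$ uniformly in $\vp$. Joint continuity then follows exactly as in \eqref{eq:cont}. The base case $n=0$ is Lemma~\ref{lemm:cont-U0}, since $V_0=U_0$.

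For the convexity claim I argue by induction. Assume $\vp\mapsto V_n(T,\vp,b)$ is convex for every $T$ and $b$. Since $K(a,b,\cdot)$ is affine in $\vp$, the map $\vp\mapsto V_n(T,\vp,b)-K(a,b,\vp)$ is convex, and as a finite maximum of convex functions $\M V_n(T,\cdot,a)$ is convex as well. To pass convexity through $\G$, I use the identity $\G V_n=\lim_{m\to\infty}u_m$ established in Step~2 of the proof of Proposition~\ref{eq:hat-wn}, now applied with $u_0=\M V_n$ and $u_{m+1}=L(u_m,\M V_n)$; see \eqref{eq:u-n}. The operator $L$ preserves convexity in $\vp$ whenever both of its function arguments are convex: this is precisely the computation of Step~2 of Lemma~\ref{lemm:u0cont} (which treats $I=L(\cdot,0)$), with the extra term $1_{\{t<\sigma_1\}}\e^{-\rho t}H(T-t,\vP_t,a)$ in \eqref{eq:def-L} handled by the same device. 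Writing a convex $H(T-t,\cdot,a)$ as a supremum of affine functions and using $\P^{\vp}\{\sigma_1>t\}=\sum_i m_i(t,\vp)$ together with $x_j(t,\vp)\sum_i m_i(t,\vp)=m_j(t,\vp)$, the weight $\sum_i m_i(t,\vp)$ cancels the normalization and leaves an expression that is linear in $\vp$ inside the supremum, which is therefore convex because $\vp\mapsto\vec m(t,\vp)$ is linear. Consequently each $u_m$ is convex and $V_{n+1}=\G V_n=\sup_m u_m$ is convex, completing the induction.

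With convexity in hand I obtain Lipschitz continuity in $\vp$ on the closed simplex $D$ exactly as in Step~1 of Lemma~\ref{lemm:cont-U0} and in Lemma~\ref{lem:U-lips}: convexity alone yields only local Lipschitz bounds on the interior of $D$, so I extend the relevant operators to the enlarged domain $\tilde{D}=\{\vec p\in\R^m_+:\sum_i p_i\le 2\}$, observe that the extended iterates remain convex and agree with $V_n$ on $D$, and read off Lipschitz continuity on the compact set $D$ (boundary included) from local Lipschitz continuity of the convex extension in the interior of $\tilde{D}$. For continuity in $T$, I invoke $V_n=U_n$ from Proposition~\ref{prop:UneqVn} and repeat the argument of Lemma~\ref{lem:U-lips-T} verbatim: given $S>T$ and $\eps$-optimal strategies in $\U_n(T)$ and $\U_n(S)$, restricting a strategy to $[0,T]$ or holding its last decision constant on $(T,S]$ does not increase the number of switches, so the comparison strategies remain in $\U_n(T)$ and $\U_n(S)$ respectively, yielding $|V_n(T,\vp,a)-V_n(S,\vp,a)|\le c\,|T-S|$ uniformly in $\vp$. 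Combining the two Lipschitz estimates as in \eqref{eq:cont} gives joint continuity of $(T,\vp)\mapsto V_n(T,\vp,a)$.

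The main obstacle is the Lipschitz bound in $\vp$ up to the boundary of $D$: convexity is only directly useful in the interior, and the crux is the extension-to-$\tilde{D}$ construction, which in turn relies on verifying that $\G$ (equivalently the iteration by $L$) genuinely preserves convexity, including the terminal term carrying $\M V_n$. Everything else is a direct transcription of arguments already established for $U_0$, $W_n$, and $U$.
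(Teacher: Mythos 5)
Your proposal is correct, but it takes a genuinely different route from the paper. The paper's own proof is a short induction that never touches convexity: it notes that $\M$ trivially preserves continuity, and then observes that $\G w(\cdot,\cdot,a)$ is the value function of an optimal stopping problem for the piecewise-deterministic process $\vP$ with continuous terminal reward $\M w$, so continuity of $\G V_n$ follows by citing Corollary 3.1 of \cite{LS07} (see also Remark 3.4 of \cite{BS06}); the base case is Lemma~\ref{lemm:cont-U0}. You instead make the argument self-contained: you push convexity through $\G$ by representing $\G V_n$ as the monotone limit of the iterates $u_{m+1}=L(u_m,\M V_n)$ from \eqref{eq:u-n}, reuse the cancellation device of Step 2 of Lemma~\ref{lemm:u0cont} to show $L$ preserves convexity (your handling of the extra terminal term via $x_j(t,\vp)\sum_i m_i(t,\vp)=m_j(t,\vp)$ is exactly the right computation, and is what the paper itself invokes, without detail, for $W_n$ in Lemma~\ref{lem:Wconv}), then get Lipschitz continuity in $\vp$ via the extension to $\tilde D$ as in Lemma~\ref{lem:U-lips}, and Lipschitz continuity in $T$ via $V_n=U_n$ (Proposition~\ref{prop:UneqVn}) and the strategy-comparison argument of Lemma~\ref{lem:U-lips-T}, correctly noting that truncation and constant extension of strategies do not increase the switch count, so the comparisons stay inside $\U_n$. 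What the paper's route buys is brevity, at the cost of leaning on external continuity results for stopping problems of PDPs; what your route buys is independence from those citations plus strictly stronger conclusions — convexity and explicit Lipschitz estimates for each $V_n$, which the paper announces after Lemma~\ref{lem:incr} but only ever proves for the $W_n$ sequence. The one caveat is that your $\vp$-regularity step inherits the paper's own extension device (convexity on $\tilde D$ yielding Lipschitz bounds up to $\partial D$), so it is exactly as rigorous as Lemmas~\ref{lemm:cont-U0} and \ref{lem:U-lips}, no more and no less.
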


\begin{proof}
We proved in Lemma \ref{lemm:cont-U0} that $(T,\vp) \rightarrow U_0(T,\vp,a)$ is continuous.
Furthermore, observe that the operator $\M$ preserves continuity: if for all $a \in \A$,
$(T,\vp) \rightarrow V(T,\vp,a)$ is continuous then for $(T_1,\vp_1)$ and $(T_2,\vp_2)$ close
enough
\begin{equation}
|\M V(T_1,\vp_1,a)-\M(T_2,\vp_2,a)| \leq \max_{ b \in \A, b \neq a}|V(T_1,\vp_1,b)-V(T_2,\vp_2,b)|
\end{equation}
is small.

The rest of the proof follows due to the properties of the operator $\G$ in \eqref{def:G}.
Indeed, $\G w(\cdot, \cdot, a)$ defines an optimal stopping problem for $\vP$ with terminal
reward function $\M w(\cdot, \cdot, a)$. As shown in Corollary 3.1 of \cite{LS07} (see also
Remark 3.4 in \cite{BS06}), when $\M w$ is continuous, then the value function $\G w$ of this
optimal stopping problem is also continuous. Therefore, by induction, $V_{n+1} = \G V_n$ is
continuous.

\end{proof}

\begin{cor}\label{cor:cont}
The value function $U(\cdot,\cdot,a)$ is continuous for all $a \in \A$. Moreover,
$(V_n(\cdot,\cdot,a))_{n \in \mathbb{N}}$ defined in (\ref{defn:Un}) and
$(W_n(\cdot,\cdot,a))_{n \geq 0}$, defined in Proposition~\ref{eq:hat-wn}, both converge to
$U(\cdot,\cdot,a)$ uniformly for all $a \in \A$.
\end{cor}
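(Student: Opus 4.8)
The plan is to deduce the continuity of $U$ from the two Lipschitz estimates already established, and then to obtain both uniform convergences in one stroke via Dini's theorem on a compact domain.

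First I would pin down joint continuity of $(T,\vp)\mapsto U(T,\vp,a)$ for each fixed $a\in\A$. Lemma~\ref{lem:U-lips} provides Lipschitz continuity in $\vp$ (with constant $R(T,a)$), while Lemma~\ref{lem:U-lips-T} provides Lipschitz continuity in $T$ uniformly in $\vp$ (with constant $c$). Combining the two exactly as in \eqref{eq:cont}, for $\vp,\vec{p}\in D$ and $T,S\in(0,T_0]$,
\begin{equation*}
|U(T,\vp,a)-U(S,\vec{p},a)| \leq R(T,a)\,|\vp-\vec{p}| + c\,|T-S|,
\end{equation*}
which yields continuity on $[0,T_0]\times D$; as $T_0$ is arbitrary, $U(\cdot,\cdot,a)$ is continuous on all of $\R_+\times D$.

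Next I would apply Dini's theorem. Fix $T_0>0$ and note that $[0,T_0]\times D$ is compact and $\A$ is finite. For $(V_n)$: Lemma~\ref{lem:incr} together with Proposition~\ref{prop:vnconvU} gives $V_n\nearrow U$ pointwise; each $V_n(\cdot,\cdot,a)$ is continuous by the preceding lemma; and the limit $U(\cdot,\cdot,a)$ is continuous by the first step. Since a monotone sequence of continuous functions converging pointwise to a continuous limit on a compact set converges uniformly, $V_n\to U$ uniformly on $[0,T_0]\times D$; as $\A$ is finite, taking the maximum over $a$ preserves uniformity. The argument for $(W_n)$ is verbatim the same, drawing $W_n\nearrow U$ from Proposition~\ref{eq:hat-wn} and the continuity of each $W_n(\cdot,\cdot,a)$ from Lemma~\ref{lem:Wconv}.

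I do not expect a genuine obstacle, since the substantive work lives in the regularity lemmas that precede the corollary; the corollary itself is essentially a packaging of Dini's theorem. The only point demanding care is that $\R_+$ is not compact, which is why I restrict to a compact time window $[0,T_0]$ and read the asserted uniform convergence as uniform on compact time intervals. (When $\rho>0$ the a priori bounds are even uniform in $T$ by the earlier remark, but the compact restriction is the clean and sufficient setting for Dini.)
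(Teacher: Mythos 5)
Your proposal is correct and takes essentially the same route as the paper: the paper likewise deduces continuity of $U(\cdot,\cdot,a)$ from Lemmas~\ref{lem:U-lips} and \ref{lem:U-lips-T} (via the decomposition \eqref{eq:cont}) and then invokes Dini's theorem, using the monotonicity of $(V_n)$ and $(W_n)$ and their continuity from the preceding lemmas, to get uniform convergence on compacts. Your explicit remark about restricting to a compact time window $[0,T_0]$ is exactly the reading the paper intends when it states Dini's theorem as giving ``uniform convergence on compacts.''
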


\begin{proof}
Lemmas~\ref{lem:U-lips} and \ref{lem:U-lips-T} imply the continuity of $U(\cdot,\cdot,a)$ (also see \eqref{eq:cont}). Now
the rest of the statement of the corollary follows from Dini's theorem, which states that
pointwise convergence of continuous functions to a continuous limit implies uniform convergence
on compacts.
\end{proof}

Using Corollary~\ref{cor:cont} we obtain the following explicit existence result about an
optimal strategy for $U$:
\begin{prop}
Let us extend the value functions $U_0$ and $U$ so that
\begin{equation}
U_0(T,\vp,a)=U(T,\vp,a)=0, \quad T \in [-\eps,0),\, \vp \in D, \; a \in \A,
\end{equation}
for some strictly positive constant $\eps$.
Let us recursively define a strategy $\xi^* = (\xi_0, \tau_0; \xi_1, \tau_1, \ldots)$ via
$\xi_0 = a, \tau_0 = 0$ and
\begin{align}
\left\{
\begin{aligned}
\tau_{k+1} &= \inf \{s \in [\tau_k,T] \colon U(T-s,\vP(s),\xi_k) = \M U(T-s,\vP(s),\xi_k) \}; \\
\xi_{k+1} & = d_{\M U}(T-\tau_{k+1},\vP({\tau_{k+1}}),\xi_k), \qquad k = 0,1,\ldots,
\end{aligned} \right.
\end{align}
with the convention that $\inf \emptyset =T+\eps$.
Then $\xi^*$ is an optimal strategy for \eqref{def:V}, i.e.,
\begin{equation}\label{eq:dec-p-str}
U(T,\vp,a)=\E^{\vp,a}\left[\int_0^{T}\!\e^{-\rho s}C(\vP(s),\xi^*_s) \,ds -\sum_{k: \tau_k \leq
T}\e^{-\rho \tau_k} K(\xi_k,\xi_{k+1},\vP({\tau_k}))\right].
\end{equation}
\end{prop}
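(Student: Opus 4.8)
The plan is to verify that the running value of $\xi^*$ equals $U$ directly, by exploiting the dynamic programming equation $U=\G U$ of Proposition~\ref{prop:dp} together with the strong Markov property of $(\vP,\xi)$ to peel off one switch at a time. The starting observation is that, because $U$ and hence $\M U$ are continuous (Corollary~\ref{cor:cont}), the set $\{(s,\vp):U(T-s,\vp,a)=\M U(T-s,\vp,a)\}$ is closed, so each $\tau_{k+1}$ is a genuine $\Fc^X$-stopping time, namely a hitting time of the continuous piecewise-deterministic process $\vP$. Moreover, by the classical optimal stopping theory for piecewise deterministic processes (as invoked in the proof of Proposition~\ref{prop:UneqVn} and in \cite{LS07,BS06}), the first entry time into this stopping region is an \emph{optimal} stopping time for the problem defining $\G U$. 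Thus $\tau_1$ attains the supremum in \eqref{eq:dyn-prog}, and $U=\G U$ becomes the identity
\begin{equation*}
U(T,\vp,a) = \E^{\vp,a}\Big[ \int_0^{\tau_1 \wedge T} \e^{-\rho s} C(\vP_s, a)\, ds + 1_{\{\tau_1 \le T\}}\e^{-\rho \tau_1}\, \M U(T-\tau_1, \vP_{\tau_1}, a) \Big].
\end{equation*}

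Next I would invoke the definition of $\xi_1 = d_{\M U}(T-\tau_1, \vP_{\tau_1}, a)$, which by \eqref{eq:intervention-op} and \eqref{eq:optimal-action} gives $\M U(T-\tau_1, \vP_{\tau_1}, a) = U(T-\tau_1, \vP_{\tau_1}, \xi_1) - K(a, \xi_1, \vP_{\tau_1})$ on $\{\tau_1 \le T\}$. Substituting this and then applying the strong Markov property of $(\vP,\xi)$ at $\tau_1$ to re-expand $U(T-\tau_1, \vP_{\tau_1}, \xi_1)$ in the same fashion, an induction on $n$ yields the telescoping identity
\begin{equation*}
U(T,\vp,a) = \E^{\vp,a}\Big[ \int_0^{\tau_n \wedge T}\! \e^{-\rho s} C(\vP_s, \xi^*_s)\, ds - \sum_{k=1}^{n} \e^{-\rho \tau_k} 1_{\{\tau_k \le T\}} K(\xi_{k-1}, \xi_k, \vP_{\tau_k}) + 1_{\{\tau_n \le T\}} \e^{-\rho \tau_n} U(T-\tau_n, \vP_{\tau_n}, \xi_n) \Big].
\end{equation*}
Here I would also record that $\tau_{k+1} > \tau_k$ strictly: the triangle inequality on the switching costs rules out two switches at once, since if $\xi_1$ lay in its own stopping region immediately after $\tau_1$ one could collapse the two switches into a single cheaper direct switch, contradicting the choice of $\xi_1$ as the cheapest maximizer in $\M U(\cdot,a)$.

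The main work is then to pass to the limit $n\to\infty$, and this is where I expect the real obstacle. The remainder term is controlled using the uniform bound on $U$ from the Remark following Proposition~\ref{prop:dp}, so that $|1_{\{\tau_n \le T\}}\e^{-\rho\tau_n} U(T-\tau_n, \vP_{\tau_n}, \xi_n)| \le B\, 1_{\{\tau_n \le T\}}$ for a finite constant $B$; thus it suffices to show $\P^{\vp}\{\tau_n \le T\}\to 0$, i.e.\ that $\xi^*$ makes only finitely many switches before $T$ almost surely. To obtain this, I would rearrange the telescoping identity to isolate the nonnegative partial sums of expected switching costs, which are increasing in $n$; since the running-cost integral is bounded by $c\,T$ and both $U$ and the remainder are bounded, the full series $\sum_k \E^{\vp,a}[\e^{-\rho \tau_k} 1_{\{\tau_k \le T\}} K(\xi_{k-1},\xi_k,\vP_{\tau_k})]$ converges. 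Because each summand is at least $k_0\,\e^{-\rho T}\,\P^{\vp}\{\tau_k \le T\}$ (using $K(a,b,\vp)\ge k_0$), convergence forces $\sum_k \P^{\vp}\{\tau_k \le T\}<\infty$, and Borel--Cantelli then gives finitely many switches a.s. This simultaneously establishes $\xi^*\in\U(T)$ (admissibility, including \eqref{eq:sum-expe-sw-cst}) and, via bounded and dominated convergence, that the remainder vanishes while $\tau_n \wedge T \to T$. Letting $n\to\infty$ collapses the telescoping identity into exactly \eqref{eq:dec-p-str}, so $J^{\xi^*}(T,\vp,a)=U(T,\vp,a)$ and $\xi^*$ is optimal.
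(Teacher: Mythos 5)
Your proposal is correct and follows essentially the same route as the paper: continuity of $U$ and $\M U$ (Corollary~\ref{cor:cont}) makes $\tau_1$ an optimal stopping time for the problem $U=\G U$, the definition of $d_{\M U}$ converts $\M U$ into $U-K$, and the strong Markov property at the hitting times $\tau_k$ yields the same telescoping identity (the paper's \eqref{eq:ind-opt-st}), which is then passed to the limit. The only difference is that where the paper simply asserts $\tau_n \to T+\eps$ by appealing to its standing assumptions on switching costs, you supply an explicit justification (bounded partial sums of expected switching costs, the lower bound $k_0\e^{-\rho T}\P^{\vp}\{\tau_k\le T\}$, and Borel--Cantelli), which is a welcome filling-in of a detail rather than a different method.
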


\begin{proof}
We will show that for $n=1,2,\ldots$
\begin{multline}\label{eq:ind-opt-st}
\E^{\vp,a}\left[\int_0^{\tau_n}\!\e^{-\rho s}C(\vP(s),\xi_s)ds -\!\sum_{k=0}^{n-1}\e^{-\rho
\tau_k} K(\xi_k,\xi_{k+1},\vP({\tau_k}))\right] \\ = U(T,\vp,a)-\E^{\vp,a}\left[\e^{-\rho
\tau_n} U (T-\tau_n,\vP({\tau_n}),\xi_n)\right].
\end{multline}
Suppose that \eqref{eq:ind-opt-st} is true. Then
\begin{equation}
\begin{split}
\E^{\vp,a}&\left[\int_0^{T}\!\e^{-\rho s}C(\vP(s),\xi_s) \,ds -\sum_{k=0}^{n-1}\e^{-\rho
\tau_k} K(\xi_k,\xi_{k+1},\vP({\tau_k}))\right]
\\ &= U(T,\vp,a)-\E^{\vp,a}\left[\e^{-\rho \tau_n} U
(T-\tau_n,\vP({\tau_n}),\xi_n)\right]+\E^{\vp,a}\left[\e^{-\rho \tau_n} U_0
(T-\tau_n,\vP({\tau_n}),\xi_n)\right].
\end{split}
\end{equation}
Taking the limit as $n \rightarrow \infty$ and using bounded convergence theorem and $\tau_n
\rightarrow T+\eps$, we have that
\begin{align*}
 U(T,\vp,a) &=\E^{\vp,a}\left[\int_0^{T}\e^{-\rho
s}C(\vP(s),\xi_s)ds -\sum_{k}\e^{-\rho \tau_k} K(\xi_k,\xi_{k+1},\vP({\tau_k}))\right] \\
& \leq \E^{\vp,a}\left[\int_0^{T}\e^{-\rho s}C(\vP(s),\xi_s)ds -\sum_{k: \tau_k \leq
T}\e^{-\rho \tau_k} K(\xi_k,\xi_{k+1},\vP({\tau_k}))\right],
\end{align*}
since $K(a,b,\vp) > 0$, and equation (\ref{eq:dec-p-str}) follows.

To establish \eqref{eq:ind-opt-st} we proceed by induction. The functions $U(\cdot,\cdot,a)$
and $\M U(\cdot,\cdot,a)$ are continuous by Corollary~\ref{cor:cont}. As a result the stopping
time
\begin{equation}
\tau_1= \inf \left\{s \in [0,T]: U(T-s,\vP(s),a)=\M U(T-s, \vP(s), a) \right\},
\end{equation}
satisfies
\begin{equation}
\E^{\vp,a}\left[\int_0^{\tau_1}\e^{-\rho s}C(\vP(s),a)ds+\e^{-\rho \tau_1} \M U
(T-\tau_1,\vP({\tau_1}),a)\right]=U(T,\vp,a),
\end{equation}
see e.g.\ Proposition 5.12 in \cite{bdk05}. Rearranging and using $\xi_1 = d_{\M
U}(T-\tau_{1},\vP({\tau_{1}}),a)$,
\begin{equation}\label{eq:aar}
\E^{\vp,a}\left[\int_0^{\tau_1}\!\e^{-\rho s}C(\vP(s),\xi_0)\,ds -\e^{-\rho \tau_1}
K(\xi_0,\xi_1,\vP({\tau_1}))\right]= U(T,\vp,a)-\E^{\vp,a}\left[\e^{-\rho \tau_1} U
(T-\tau_1,\vP({\tau_1}),\xi_1)\right],
\end{equation}
proving \eqref{eq:ind-opt-st} for $n=1$. Perhaps we should emphasize the dependence on $T$ on
the left-hand-side of \eqref{eq:aar} by inserting $T$ as another superscript above $\E$ (we are
conditioning on the strong Markov process $t \rightarrow (T-t,\vP_t,\xi_t)$). Although we are
not going to implement this for notational consistency/convenience, one should keep this point
in mind when reading the rest of the proof.

Assume now that for some $n \ge 1$ (\ref{eq:ind-opt-st}) is satisfied; we will prove that it
also holds when we replace $n$ by $n+1$. Since $\tau_n$'s are all hitting times we have that
$\tau_{n+1}=\tau_n+\tau_1 \circ \theta_{\tau_n}$.
\begin{equation}\label{eq:ind-opt-st-2}
\begin{split}
&\E^{\vp,a}\left[\int_0^{\tau_{n+1}}\e^{-\rho s}C(\vP(s),\xi_s)ds -\sum_{k=0}^{n}\e^{-\rho
\tau_k} K(\xi_k,\xi_{k+1},\vP({\tau_k}))\right]=\E^{\vp,a} \bigg[\int_0^{\tau_n}\e^{-\rho
s}C(\vP(s),a)ds
\\&-\sum_{k=0}^{n-1}\e^{-\rho \tau_k} K(\xi_k,\xi_{k+1},\vP({\tau_k}))+\e^{-\rho \tau_n}\E^{\vP(\tau_n),\xi_n}\bigg[\int_0^{\tau_1}\e^{-\rho s}C(\vP(s),\xi_0)ds -\e^{-\rho \tau_1} K(\xi_0,\xi_1,\vP({\tau_1}))\bigg]\bigg]
\end{split}
\end{equation}
Using \eqref{eq:aar} we can then write
\begin{equation}\label{eq:ind-opt-st-3}
\begin{split}
\E^{\vp,a}\bigg[\e^{-\rho \tau_n}&\E^{\vP(\tau_n),\xi_n}\big[\int_0^{\tau_1}\e^{-\rho
s}C(\vP(s),\xi_0)ds -\e^{-\rho \tau_1} K(\xi_0,\xi_1,\vP({\tau_1}))\big]\bigg]
\\&=\E^{\vp,a}\left[\e^{-\rho \tau_n}U(T-\tau_n,\vP(\tau_n),\xi_n)-\e^{-\rho \tau_{n+1}}U(T-\tau_{n+1},\vP(\tau_{n+1}),\xi_{n+1})\right].
\end{split}
\end{equation}
Using (\ref{eq:ind-opt-st-2}) and \eqref{eq:ind-opt-st-3} together with the induction
hypothesis, we obtain \eqref{eq:ind-opt-st} when $n$ is replaced by $n+1$.

\end{proof}

Let
\begin{align}
\label{def:stop-cont-regions}
\begin{aligned}
\mathcal{C}_s(a) &\triangleq \left\{ \vp \in D : U(s, \vp,a) > \M U(s,\vp,a) \right\}, \\
\Gamma_{s}(a) &\triangleq \left\{ \vp \in D : U(s, \vp, a) = \M U(s, \vp,a ) \right\}
\end{aligned}
\end{align}
denote the continuation and switching regions for initial policy $a$ with $s$ time units until
maturity. The switching region can further be decomposed as the union $\cup_{b \in \mathcal{A}
} \Gamma_{s}(a,b)$ of the regions defined as
\begin{align}
\label{def:accept-reject-regions}
\begin{aligned}
\Gamma_{s}(a,b) &\triangleq \left\{ \vp \in D : U(s, \vp,a) = U(s,\vp,b) - K(a,b,\vp) \right\},
\qquad b \in \mathcal{A},
\end{aligned}
\end{align}

The results in the previous section imply that to solve \eqref{eq:dyn-prog} with initial
horizon of $T$, one maintains the initial policy $a$ and observes the process $\vP$ until time
$\tau_1=\tau_1(T)$, whence it enters the region $\Gamma_{T-\tau_1}(a)$. At this time, if
$\vP_{\tau_1}$ is in the set $\Gamma_{T-\tau_1}(a,b)$ we take $\xi_1 = b$; that is, we select
the $b$'th policy in the policy set $\mathcal{A}$. The boundaries of $\Gamma_{s}(a,b)$ are
termed switching boundaries and provide an efficient way of summarizing the optimal strategy of
the controller. We plot these curves in our examples in Section \ref{sec:examples}.

\section{Extensions}\label{sec:extend}
\subsection{Infinite Horizon Formulation}\label{sec:infinite-horizon}
In many practical settings, the controller does not have a natural horizon for her strategies.
In such cases it is more appropriate to consider infinite-horizon setting. Due to
time-homogeneity, the infinite-horizon problem is stationary in time, reducing the dimension by
one. In particular, the optimal strategy can be simplified with a single switching-boundary
plot, as $\Gamma_s(a)$'s are independent of $s$.

For $\rho > 0$, let
\begin{align}\label{eq:inf-horizon-defn}
V_\rho(\vp, a) = \sup_{\xi \in \U(\infty)} \E^{\vp,a} \left[ \int_0^\infty \e^{-\rho t}
C(\vP(t), \xi_t) \, dt - \sum_k \e^{-\rho \tau_k} K(\xi_{k-1}, \xi_k, \vP({\tau_k}))
\right].\end{align} Here $\U(\infty)$ denotes the admissible strategies that satisfy $\E^{\vp,
a} \left[ \sum_k \e^{-\rho \tau_k} K(\xi_{k-1}, \xi_k, \vP({\tau_k})) \right] < \infty$.

The next proposition shows that the infinite horizon problem can be uniformly approximated by
the finite horizon problems. In fact, the convergence is exponentially fast in the time horizon
$T$.
\begin{prop}
There exists a constant $R$ such that
\begin{equation}
|U(T,\vp,a) - V_\rho(\vp,a)| \leq \e^{-\rho T} R.
\end{equation}
\end{prop}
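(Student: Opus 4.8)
The plan is to prove the two one-sided estimates $U(T,\vp,a)-V_\rho(\vp,a)\le \e^{-\rho T}R$ and $V_\rho(\vp,a)-U(T,\vp,a)\le \e^{-\rho T}R$ separately, by a direct strategy-comparison argument that transfers a near-optimal control from one horizon to the other. No regularity of $U$ or $V_\rho$ is needed: the only inputs are the uniform bound $|C(\vp,a)|\le c$ and the strict positivity of the switching costs. I expect the estimate to hold with the explicit, fully uniform constant $R=c/\rho$ (finite since $\rho>0$ here).

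For the first bound, fix $(T,\vp,a)$ and take $\xi^T\in\U(T)$ that is $\eps$-optimal for $U(T,\vp,a)$. I would extend it to $\wtilde{\xi}\in\U(\infty)$ by freezing the terminal policy: keep $\xi^T$ on $[0,T]$ and make no further switch on $(T,\infty)$. Since $\wtilde{\xi}$ has the same (finitely many) switches as $\xi^T$, it is admissible for the infinite-horizon problem, and comparing performance criteria the only new contribution is the post-$T$ running reward:
\[
V_\rho(\vp,a)\ge J^{\wtilde{\xi}}(\infty,\vp,a)=J^{\xi^T}(T,\vp,a)+\E^{\vp,a}\!\left[\int_T^\infty \e^{-\rho t}C(\vP(t),\wtilde{\xi}_t)\,dt\right].
\]
The tail integral is bounded below by $-\tfrac{c}{\rho}\e^{-\rho T}$ via $|C|\le c$, and $J^{\xi^T}(T,\vp,a)\ge U(T,\vp,a)-\eps$; letting $\eps\downarrow0$ gives $V_\rho(\vp,a)\ge U(T,\vp,a)-\tfrac{c}{\rho}\e^{-\rho T}$.

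For the reverse bound, take $\xi\in\U(\infty)$ that is $\eps$-optimal for $V_\rho(\vp,a)$ and let $\wtilde{\xi}\in\U(T)$ be its truncation at $T$, i.e.\ $\xi$ restricted to $[0,T]$ with all switches at $\tau_k>T$ discarded (admissibility holds because $\E^{\vp,a}[\sum_k \e^{-\rho\tau_k}K]<\infty$, together with $K>k_0>0$ from \eqref{eq:sum-expe-sw-cst}, forces finitely many switches on $[0,T]$). Since $\wtilde{\xi}$ agrees with $\xi$ on $[0,T]$,
\[
J^{\wtilde{\xi}}(T,\vp,a)=J^{\xi}(\infty,\vp,a)-\E^{\vp,a}\!\left[\int_T^\infty \e^{-\rho t}C(\vP(t),\xi_t)\,dt-\!\!\sum_{k:\tau_k>T}\!\!\e^{-\rho\tau_k}K(\xi_{k-1},\xi_k,\vP(\tau_k))\right].
\]
The crucial point is the sign: the post-$T$ switching costs that $\xi$ incurs are strictly positive and are \emph{saved} by the truncated strategy, so they enter with a favorable (nonnegative) sign, while the only quantity needing control is the post-$T$ running reward, bounded in absolute value by $\tfrac{c}{\rho}\e^{-\rho T}$. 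Hence $U(T,\vp,a)\ge J^{\wtilde{\xi}}(T,\vp,a)\ge J^{\xi}(\infty,\vp,a)-\tfrac{c}{\rho}\e^{-\rho T}\ge V_\rho(\vp,a)-\eps-\tfrac{c}{\rho}\e^{-\rho T}$, and $\eps\downarrow0$ closes this direction.

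Combining the two bounds yields $|U(T,\vp,a)-V_\rho(\vp,a)|\le \tfrac{c}{\rho}\e^{-\rho T}$, so $R=c/\rho$ works uniformly in $\vp$ and $a$. The arithmetic is routine; the only genuine care lies in the bookkeeping of the truncation and extension---checking that the modified controls land in $\U(T)$ and $\U(\infty)$ (resting on the almost-sure finiteness of the switch count, guaranteed by $K>k_0>0$), and in observing that the switching-cost term dropped in the second step carries the sign that helps rather than hurts. That sign observation is the one place a careless estimate could spuriously introduce an extra term, and it is what I would regard as the main (if modest) obstacle.
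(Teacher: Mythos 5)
Your proof is correct and follows essentially the same route as the paper's: one direction by extending an $\eps$-optimal finite-horizon strategy with a frozen terminal policy, the other by truncating an $\eps$-optimal infinite-horizon strategy and using positivity of the switching costs to drop the saved tail costs. The only (cosmetic) difference is that you bound the tail integral directly via $|C|\le c$ to get the explicit constant $R=c/\rho$, whereas the paper invokes the strong Markov property and uniform boundedness of the infinite-horizon analogue of $U_0$ to produce an unspecified constant $\tilde R$.
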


\begin{proof}
Let $\xi^T$ be an $\eps$-optimal strategy of $U(T,\vp,a)$ and $\tilde{\xi}^T = \xi^T(t)
1_{[0,T]} + \xi^T_T 1_{(T, \infty)} \in \U(\infty)$. Then
\begin{align*}
V_\rho(\vp,a) - U(T,\vp,a) & \geq \E^{\vp,a} \left[ \int_T^\infty \e^{-\rho s} C(\vP_s,
\tilde{\xi}^T_s) \, ds \right] - \eps \\
& \geq -\e^{-\rho T}\int_0^\infty \e^{-\rho s} c \, ds - \eps \geq -\e^{-\rho T} c/\rho - \eps. \\
\intertext{On the other hand, using an $\eps$-optimal control $\xi^\infty$ of $V_\rho(\vp,a)$,}
V_\rho(\vp,a) - U(T,\vp,a)& \leq \E^{\vp,a} \left[ \int_T^\infty \e^{-\rho s} C(\vP_s,
\xi^\infty_s) \, ds - \sum_{k \colon \tau_k > T} \e^{-\rho \tau_k} K(\xi^\infty_{k-1},
\xi^\infty_k,
\vP_{\tau_k^\infty}) \right] + \eps \\
& \leq \E^{\vp,a} \left[ \e^{-\rho T} \E^{\vP_T,\xi^\infty_T} \left[ \int_0^\infty \e^{-\rho s}C(\vP_s,\xi^\infty_T)\, ds  \right] \right] + \eps \\
& \leq \e^{-\rho T} \tilde{R} + \eps,
\end{align*}
for some constant $\tilde{R}$ where the last line used the fact that the inner term, which is
the infinite-horizon counterpart of $U_0$, is uniformly bounded on the compact domain $D \times
\A$. Taking $R = \max( \tilde{R}, c/\rho)$ the proposition follows.
\end{proof}

The
characterization of the value function of the infinite horizon problem, which we give below, follows along same lines as in Section \ref{sec:opt-strategy}.
\begin{prop}
$V_\rho$ is the smallest fixed point of the operator $\hat{L}_\rho(V) \triangleq L_\rho(V,\tM
V)$ where
$$ L_\rho(V,H)(\vp, a) = \sup_{t \ge 0} \E^{\vp,a} \left[ \int_0^{t \wedge \sigma_1} \e^{-\rho s}
C(\vP_s, a) \, ds + 1_{\{t < \sigma_1\}} \e^{-\rho t} H(\vP_t, a) + 1_{\{t \ge \sigma_1\}}
\e^{-\rho \sigma_1} V(\vP_{\sigma_1}, a) \right]$$ and
$$ \tM V(\vp,a) = \max_{b \in \A, b \neq a} \left\{V(\vp, b) - K(a,b,\vp) \right\}.$$
\end{prop}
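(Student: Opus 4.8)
The plan is to reproduce, in the stationary setting, the three-step argument used for the finite-horizon first-jump operator in Proposition~\ref{eq:hat-wn}, with the discount factor $\rho>0$ providing the uniformity that the finite horizon $T$ supplied there. Throughout I write $V_{\rho,0}(\vp,a) \triangleq \E^{\vp,a}[\int_0^\infty \e^{-\rho s} C(\vP_s,a)\,ds]$ for the infinite-horizon value of never switching; this is well defined and uniformly bounded by $c/\rho$ on the compact domain $D\times\A$, it satisfies $V_{\rho,0}=L_\rho(V_{\rho,0},\tM V_{\rho,0})$ with $t\to\infty$ by the strong Markov property of $\vP$ at $\sigma_1$, and it is the natural lower bound, so ``smallest fixed point'' should be read as smallest fixed point dominating $V_{\rho,0}$. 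First I would set $W_{\rho,0} \triangleq V_{\rho,0}$ and $W_{\rho,n+1}\triangleq \hat{L}_\rho W_{\rho,n}$. Exactly as in Step~1 of Proposition~\ref{eq:hat-wn}, letting $t\to\infty$ inside $L_\rho$ shows $W_{\rho,1}\ge W_{\rho,0}$, and since $\hat{L}_\rho$ is monotone the sequence $(W_{\rho,n})$ is nondecreasing and bounded above by $c/\rho$, so the pointwise limit $W_\rho \triangleq \sup_n W_{\rho,n}$ exists. By monotone convergence $W_\rho=\hat{L}_\rho W_\rho$, and the same induction as before shows that any fixed point $\tilde V\ge V_{\rho,0}$ satisfies $\tilde V\ge W_{\rho,n}$ for every $n$, hence $\tilde V\ge W_\rho$; thus $W_\rho$ is the smallest fixed point of $\hat{L}_\rho$ above $V_{\rho,0}$. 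It then remains only to identify $W_\rho$ with $V_\rho$.

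For the inclusion $W_\rho\le V_\rho$ I would show that $V_\rho$ is itself a fixed point of $\hat{L}_\rho$ dominating $V_{\rho,0}$ (the latter being clear since never switching is admissible), whence minimality of $W_\rho$ gives $W_\rho\le V_\rho$. To obtain $V_\rho=\hat{L}_\rho V_\rho$ I would pass to the limit $T\to\infty$ in the finite-horizon identity $U(T,\cdot,\cdot)=\hat{L}\,U(T,\cdot,\cdot)$ of Proposition~\ref{eq:hat-wn}. The preceding proposition gives $U(T,\cdot,\cdot)\to V_\rho$ uniformly on $D\times\A$ with rate $\e^{-\rho T}R$, so inside $L$ the post-jump term $U(T-\sigma_1,\vP_{\sigma_1},a)$ and, through $\M U(T-t,\cdot,a)\to\tM V_\rho(\cdot,a)$, the pre-jump term converge uniformly once the remaining horizon is large. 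The key technical point is the interchange of $\lim_{T\to\infty}$ with the supremum over $t\in[0,T]$: here the discounting rescues the argument, because the integrand is uniformly bounded and weighted by $\e^{-\rho(t\wedge\sigma_1)}$, so for any $\eps$ the contribution of $t>T_\eps$ is below $\eps$ uniformly in $T$ and in the infinite-horizon problem, while on the truncated range $t\in[0,T_\eps]$ the remaining horizon $T-t\ge T-T_\eps$ tends to infinity and the uniform convergence applies. The two suprema therefore agree in the limit, yielding $V_\rho=\hat{L}_\rho V_\rho$.

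The remaining direction $W_\rho\ge V_\rho$ is the genuinely delicate one, and I would mirror Step~2 of Proposition~\ref{eq:hat-wn}. Introduce the stationary coupled optimal-stopping operator $\G_\rho V(\vp,a)\triangleq\sup_{\tau}\E^{\vp,a}[\int_0^\tau \e^{-\rho s}C(\vP_s,a)\,ds+\e^{-\rho\tau}\tM V(\vP_\tau,a)]$, build the auxiliary sequence $u_0=\tM W_\rho$, $u_{n+1}=L_\rho(u_n,\tM W_\rho)$, and show as in \cite{bdk05} that $u_n\uparrow\G_\rho W_\rho$, with $u_n$ the value of the same stopping problem stopped at $\sigma_n$. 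The identity $W_\rho=L_\rho(W_\rho,\tM W_\rho)$ then gives $W_\rho\ge\tM W_\rho=u_0$ and, inductively via monotonicity of $L_\rho(\cdot,\tM W_\rho)$, $W_\rho\ge u_n$ for all $n$, so $W_\rho=\G_\rho W_\rho$. Finally I would invoke the stationary analog of Proposition~\ref{prop:dp}, namely that $V_\rho$ is the \emph{smallest} fixed point of $\G_\rho$ above $V_{\rho,0}$, to conclude $W_\rho\ge V_\rho$ and hence $W_\rho=V_\rho$.

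The main obstacle is precisely this last ingredient: establishing the stationary dynamic-programming characterization of $V_\rho$ for $\G_\rho$ (the analogue of Propositions~\ref{prop:UneqVn}--\ref{prop:dp}). It is proved by the same approximation scheme, restricting to strategies with at most $n$ interventions in $\U(\infty)$, whose hypotheses carry over verbatim because $\rho>0$ and $K>k_0>0$ force finitely many switches almost surely and finite expected switching cost. Everything rests on the strong Markov property of the piecewise-deterministic process $\vP$ (Proposition~\ref{cor:pdp}); the only real subtlety relative to the finite-horizon case is that every limit interchange must now be controlled by the discounting estimate rather than by boundedness of the horizon, exactly as in the truncation argument above.
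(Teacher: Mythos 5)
Your proposal is correct and is essentially the paper's own argument: the paper supplies no detailed proof of this proposition, stating only that the characterization ``follows along the same lines'' as the finite-horizon analysis of Sections \ref{sec:sequential}--\ref{sec:opt-strategy}, and what you write out --- iterating $\hat{L}_\rho$ from the no-action value $V_{\rho,0}$, identifying the monotone limit as the smallest fixed point, establishing $W_\rho=\G_\rho W_\rho$ via the auxiliary sequence $u_n$, and closing the loop through the stationary analogues of Propositions \ref{prop:UneqVn}--\ref{prop:dp} (which do carry over, since $\E^{\vp,a}[\sum_k\e^{-\rho\tau_k}K]<\infty$ together with $K>k_0$ forces $\tau_n\to\infty$ a.s., so the tail estimates are controlled by $\e^{-\rho\tau_n}c/\rho$) --- is exactly that reconstruction. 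Your one deviation, deriving $V_\rho=\hat{L}_\rho V_\rho$ by letting $T\to\infty$ in the finite-horizon identity $U(T,\cdot,\cdot)=\hat{L}U(T,\cdot,\cdot)$ via the exponential bound $|U(T,\vp,a)-V_\rho(\vp,a)|\le \e^{-\rho T}R$ with the truncation-in-$t$ argument, is a legitimate alternative to mirroring Step 3 of Proposition \ref{eq:hat-wn}, though it buys little here, since the stationary $\G_\rho$-characterization that you still must prove for the reverse inclusion $W_\rho\ge V_\rho$ would yield that fixed-point property directly.
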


Note that $\hat{L}_\rho$ is given by
\begin{multline}
\label{def:J-prime} \hat{L}_\rho  w(\vp,a) =  \sup_{t \ge 0}  \biggl\{\left(\sum_{i \in E}
m_i(t,\vp)\right)
 \cdot \e^{- \rho t }  \cdot  \tM w\left(\vx( t, \vp ), a \right)
 \\ + \int_{0}^{t} \e^{- \rho u} \sum_{i \in E}  m_i(u,\vp) \left[ C(\vx(u, \vp), a) + \lambda_i \tilde{S}_i w( \vx(u, \vp), a) \right] \, du \biggr\} ,
\end{multline}
where
\begin{align*}
%\label{def:S}
\tilde{S}_i w( \vp, a) \triangleq  \int_{\R^d} w \left( \left(\frac{  \lambda_1 f_1(y) \pi_1 }{
\sum_{j \in E} \lambda_j f_j(y) \pi_j }, \ldots, \frac{  \lambda_m f_m(y) \pi_m }{ \sum_{j \in
E} \lambda_j f_j(y) \pi_j } \right), a\right) f_i(y) \nu(dy), \quad i =1, \ldots, m,
\end{align*}
for a bounded function $w(\cdot,\cdot)$ defined on $ D \times \A $ only. The optimal stopping
time for $V_\rho$ is now the first entrance time $\tau_0( \vp )$ of the process $\vP$ to the
time-stationary region
%\begin{align}
%\label{def:stopping-region-for-infinite-horizon}
$$ \Gamma(a) = \left\{ \vp \in D : \, V_\rho(\vp, a) = \tM V_\rho(\vp, a)\right\}.$$
%\end{align}

To compute $V_\rho$ we define again
$$
W_0(\vp, a) = \E^{\vp,a} \Bigl[ \int_0^\infty \e^{-\rho s} C(\vP_s, a) \, ds \Bigr],\quad
\text{ and }\quad W_{n+1} = \hat{L}_\rho W_n. $$ Then as in Section \ref{sec:opt-strategy}, it
can be shown that $W_n \nearrow V_\rho$, and $W_n$ can be computed numerically by using
\eqref{def:J-prime}.

%Also, let us define the average-cost functional
%\begin{align}\label{eq:ave-cost-defn}
%V_0(a) = \lim_{T\to\infty} \sup_{\xi \in \U(T)}  \frac{1}{T}\E^{\vp,a} \left[ \int_0^T
%\e^{-\rho t} C(\vP(t), \xi_t) \, dt - \sum_k \e^{-\rho \tau_k} K(\xi_{k-1}, \xi_k,
%\vP({\tau_k})) \right].
%\end{align}
%Because there is no discounting and the strong Markov process $\vP$ is mixing, $V_0$ is
%independent of the initial condition $\vp$. The following result characterizes $V_0$ as a limit
%of $V_\rho$.
%\begin{prop}
%$\lim_{\rho \to 0} \rho V_\rho(\vp, a) = V_0(a)$. Moreover, suppose that there exists a pair
%$(w, \beta)$, $w \in \mathcal{C}(D \times \A)$, $\beta \in \R^\A$ satisfying the coupled
%optimal stopping problem
%\begin{align*} w(\vp, a) = \sup_{t \ge 0} \E^{\vp,a} \left[ \int_0^{t
%\wedge \sigma_1} \{ C(\vP(s), a) - \beta(a) \}\, ds + 1_{\{t < \sigma_1\}} \tM w(\vP(t), a) +
%1_{\{t \ge \sigma_1\}} w(\vP(\sigma_1), a) \right].\end{align*}

% Then $\beta(a) = V_0(a)$ and an optimal strategy for \eqref{eq:ave-cost-defn} is given by $
% \tau_1 =\inf \{ t \ge 0 \colon w(\vP(t), a) \le \tM w(\vP(t), a) \}$, $\tau_n = \tau_1 \circ \theta_{\tau_{n-1}}$, $\xi_n = \argmax \tM(
% w(\vP(\tau_n),a)$,.
%\end{prop}

%This proposition follows immediately by using the results of Section \ref{sec:opt-strategy} and
%applying Theorems 1 and 2, as well as Lemma 4 in \cite{CostaAverage91} to our setting.

\subsection{Costs Incurred at Arrival Times}\label{sec:discrete-costs}
In many practical settings the arrivals of $X$ are themselves costly which leads us to consider
a running cost structure of the form
$$
\sum_{j=1}^{N(t)} \e^{-\rho \sigma_j} c_i(Y_j, a) 1_{\{ M_{\sigma_j} = i\}}, $$ where $c_i:
\R^d \times \A \mapsto \R$ (with $\int_{\R^d} c^+_i(y,a) \nu_i (dy) < \infty $ for all $i \in
E, a \in \A$) is the cost incurred upon an arrival of size $Y_j$ when the controller has policy
$a$ in place and the environment is $M_{\sigma_j}=i$. Above $N(t)$ is the number of arrivals by
time $t$, and $(\sigma_j, Y_j)$ are the arrival times and marks respectively. As an example,
see Section \ref{sec:customer-call} below.
%
%As an example, in a customer service call center, $X$ represents the customer calls whose
%frequency $\lambda$ and urgency $Y$ are modulated by the telephone traffic conditions $M$. The
%controller chooses the staffing level; larger number of staff reduces cost of servicing each
%call but has larger labor cost.

In the latter case, setting $C(y,\vp,a) = \sum_{i} \pi_i c_i(y,a)$ one deals with the objective
function
\begin{align}
\label{def:U-alternative} \tilde{U} (T, \vp, a) \triangleq
 \sup_{ \xi \in \U(T)}
\E^{\vp,a} \left[
%\int_0^{\tau} \e^{- \rho t }   \left( \sum_{i \in E} c_i 1_{ \{  M_{t} = i  \} } \right) dt
\sum_{j=1}^{N(T)} \e^{-\rho \sigma_j} C(Y_j,\vP(\sigma_j),\xi_{\sigma_j}) - \sum_k \e^{- \rho
\tau_k} K(\xi_{k-1}, \xi_{k}, \vP({\tau_k}))  \right] ,
\end{align}
by solving the equivalent coupled stopping problem
\begin{align*}
%\label{def:V-alternative}
\tilde{U} (T, \vp,a) \triangleq
 \sup_{ \tau \in \s(T) }
\E^{\vp,a} \left[
%\int_0^{\tau} \e^{- \rho t }   \left( \sum_{i \in E} c_i 1_{ \{  M_{t} = i  \} } \right) dt
\sum_{j=1}^{N(\tau)} \e^{-\rho \sigma_j} C(Y_j,\vP({\sigma_j}),a)+ \e^{- \rho \tau} \M \hat{U}
\left(T-\tau, \vP({\tau }), a \right) \right] ,
\end{align*}
as in Proposition~\ref{def:V}. One can easily verify that the function $\tilde{U}$ is the
smallest fixed point greater than $U_0$ of the operator $\tilde{L}$ whose action on a test
function $w$ is
\begin{multline*}
\tilde{L} w ( T, \vp,a) = \sup_{0 \le t \le T} \biggl\{ \Bigl(\sum_{i \in E}  m_i(t,\vp) \Bigr)
 \cdot \e^{- \rho t }  \cdot  \M w\left(T-t,\vx( t, \vp ), a \right)   \\
+  \int_{0}^{t} \e^{- \rho u}     \sum_{i \in E}  m_i(u,\vp) \cdot \lambda_i
\left(\int_{\R^d}\! C(y,\vx(u,\vp),a) \nu_i(dy) +  S_i w(T-u, \vx(u, \vp), a) \right) du
\biggr\}.
\end{multline*}

\section{Numerical Illustrations}
\label{sec:examples}

Below we provide numerical examples illustrating our model based on the applications outlined
in Section \ref{sec:apps}. The numerical implementation proceeds by discretizing the time
horizon $[0,T]$ and then directly finding the deterministic supremum over $t$'s in
\eqref{eq:J-expectations}. Similarly, the domain $D$ is also discretized and linear
interpolation is used for evaluating the jump operator $S$ of \eqref{def:S}.  Because the
algorithm proceeds forward in time with $t=0,\Delta t, \ldots, T$, for a given time-step $t=
m\Delta t$, the right-hand-side in \eqref{eq:J-expectations} is known and one may obtain
$U(m\Delta t, \vp, a)$ directly. %Thus, only two runs are required: first to compute $U_0$ and
%then to implement \eqref{eq:J-expectations}.

On infinite horizon since there is no time-variable the dynamic programming equation
\eqref{def:J-prime} is coupled. Accordingly, one must use the iterative sequence of $W_n$, as
detailed in Section \ref{sec:infinite-horizon}. Namely, one first computes $W_0 = U_0$ by
iterating \eqref{def:k-n}, and then applies $\hat{L}_\rho$ several times to find a suitably
good approximation $W_n$.

% on the case studies in Section \ref{sec:catalogue}.

\subsection{Optimal Tracking of `On-Off' System}\label{sec:security-detection}
Consider a physical system (for example a military radar) that can be in two states
$E=\{1,2\}$. Information about the system is obtained via a point process $X$ that summarizes
observations. The controller wishes to track the state of the system by announcing at each time
$0 \le t \le T$ whether the current state is $a=1$ or $a=2$, $\A = \{1,2\}$. The controller
faces a penalty if her announcement is incorrect; namely a running benefit is assessed at rate
$c_{1}(1) \, dt $ (respectively, $c_{2}(2)\, dt$) if the controller declares $\xi_t=1$ and
indeed $M_t = 1$ (resp. $\xi_t=2$ and $M_t = 2$). If the controller is incorrect then no
benefit is received. Moreover, the controller faces fixed costs $K({1,2})$ (resp. $K({2,1})$)
from switching her announcement from state 1 to state 2. $K(a,b)$'s represent the effort for
disseminating new information, alerting other systems, triggering event protocols, etc. A case
in point is the alert announcements by the Department of Homeland Security regarding terrorist
threat level which receive major coverage in the media and have significant nationwide
implications with high associated costs. Thus, both in the case of an upgrade and in the case
of a downgrade, specific protocols must be followed by appropriate government and corporate
departments. These effects imply that alert levels should be changed only when significant
changes occur in the controller beliefs.

To illustrate we take without loss of generality $c_1(1) = c_2(2) = 1, c_1(2) = c_2(1) =0$ and
first consider $K(1,2) = K(2,1) = 0.05$, $\rho = 0$, $T=1$. We assume that $X$ is a simple
Poisson process with corresponding intensities $\lambda(M) = [1, 4]$, so that arrivals are much
more likely in the `alarm' state 2. Finally, the generator of $M$ is
\begin{align*}
Q = \begin{pmatrix} -1 & 1 \\ 3 & -3 \end{pmatrix},\end{align*} so that on average an alarm
should be declared $\lim_{t\to\infty} \P\{ M_t = 2\} = 25\%$ of the time.

Figure \ref{fig:two-state} shows the results, in particular the switching regions
$\Gamma_s(a,b)$. We observe a highly non-trivial dependence of the switching boundaries on time
to maturity. First, very close to maturity, no switching takes place at all, as the fixed
switching costs $K$ dominate any possible gain to be made. For small $s$, the no-switching
region $\mathcal{C}_s(a)$ is very large, because the controller is reluctant to change her
announcement close to maturity. On the other hand, we observe that the switching region in
policy 1 narrows between medium $s \sim 0.2$ and large $s$. This happens again due to the
finite horizon. With $s=0.2$, when the controller believes that $M_t=2$ with high probability,
it is unlikely that $M_t$ will change again before maturity, so that the optimal strategy is to
pay the switching cost $K(1,2)$ and plan to maintain policy 2 until expiration. On the other
hand, for large $s \ge 0.5$, even when $\P\{M_t = 2\}=1-\pi_1$ is quite large, the controller
knows that soon enough $M$ is likely to return to state $1$ (since $q_{2,1}$ is large); rather
than do two switches and track $M$, the controller takes a shortcut and continues to maintain
policy 1 (with the knowledge that her error is likely to be shortlived). This
``shortcircuiting'' will disappear only when $\pi_1$ is extremely small. Note that this
phenomenon is one-sided: because $q_{1,2}$ is small, the upper boundary $\Gamma_{s}(2,1)$ is
monotonically decreasing over time.

\begin{figure}[ht]
\begin{tabular*}{\textwidth}{lr}
\begin{minipage}{3.2in}
\center{\includegraphics[height=2.4in,width=3.2in]{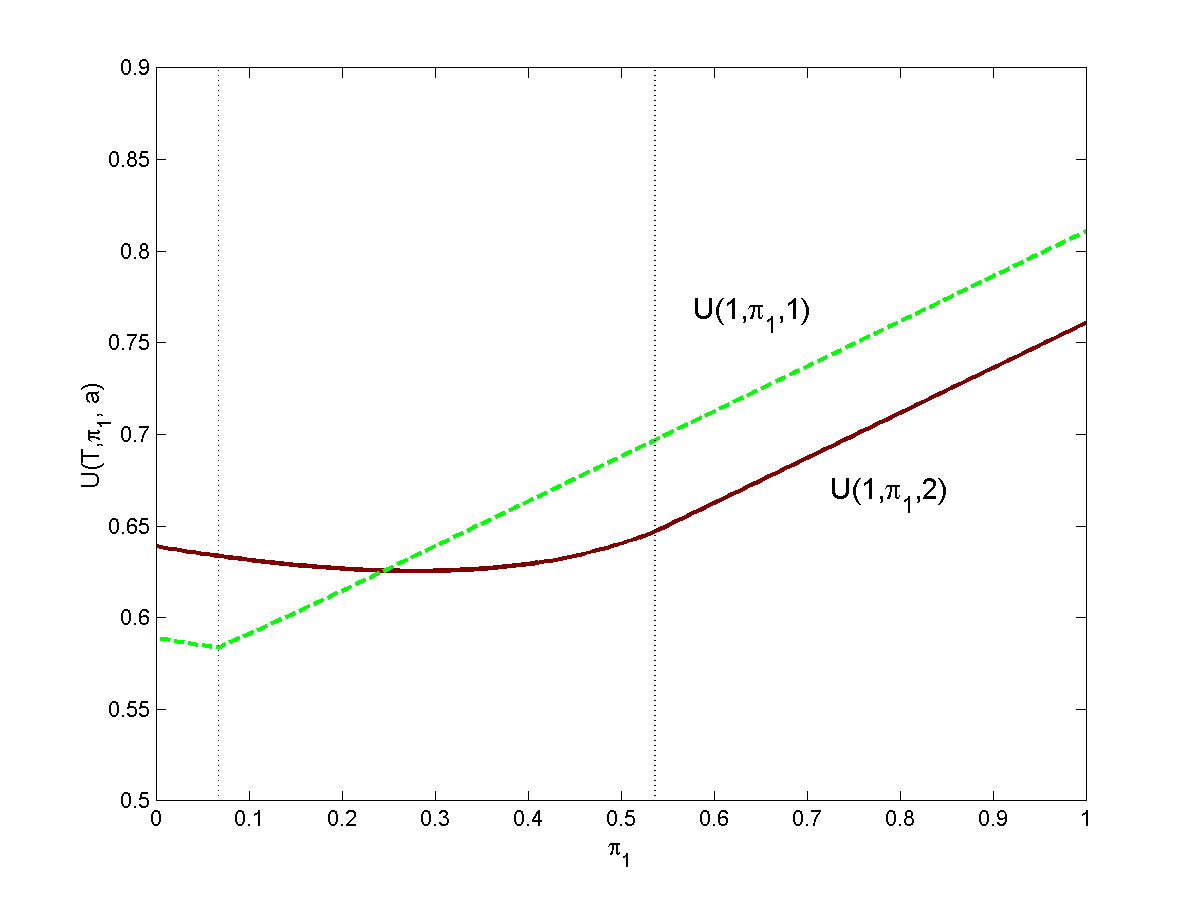}}

\end{minipage}
\begin{minipage}{3.2in}
\center{\includegraphics[height=2.4in,width=3.2in]{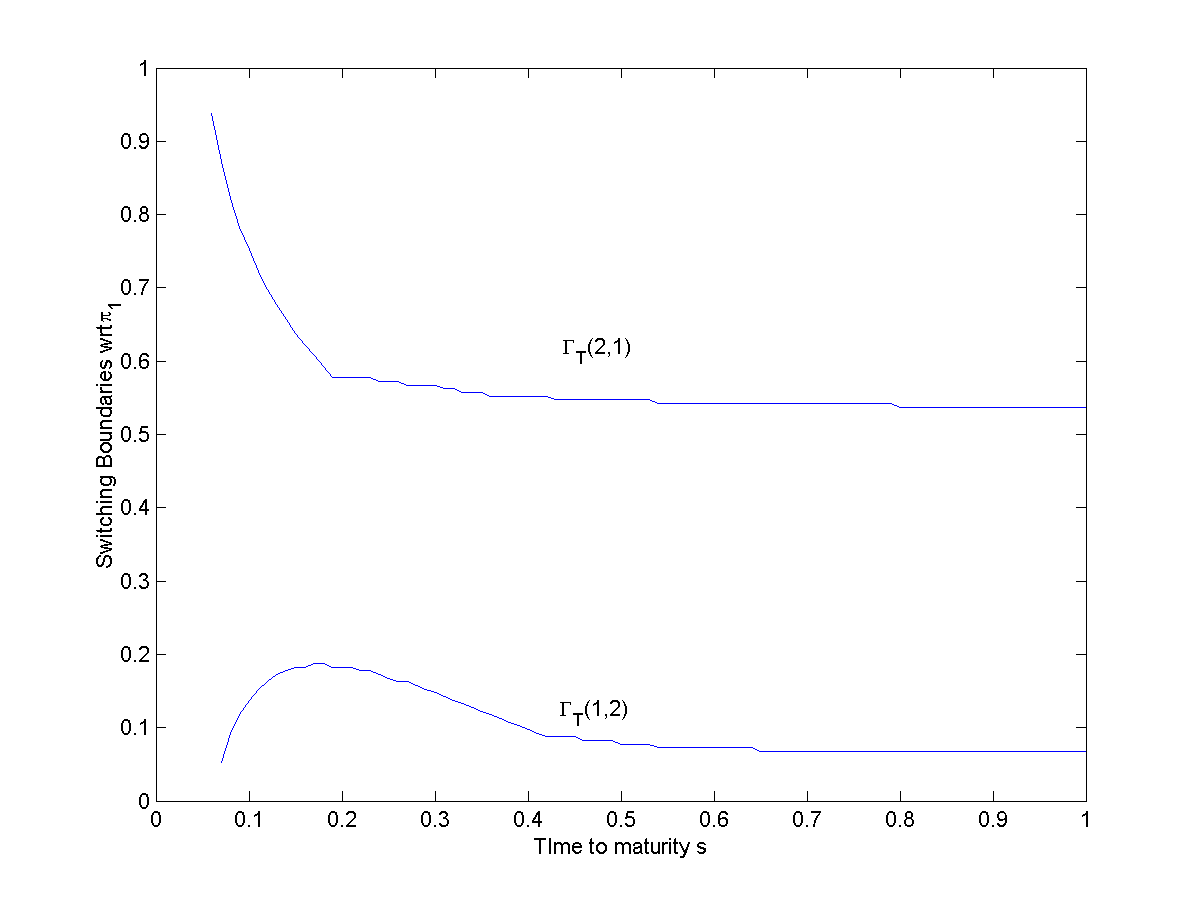}}

\end{minipage}
\end{tabular*}
\caption[Sequential Tracking]{Sequential tracking of a two-state Markov chain. The left panel
shows the value functions $U(T,\vp,\cdot)$, $a \in \{1, 2\}$, as a function of $\pi_1$ for
$T=1$. Recall that in this case $D = \{ (\pi_1, 1-\pi_1) \colon 0 \le \pi_1 \le 1 \}$. The
vertical lines indicate the boundary of $\Gamma_T(1,2)$ and $\Gamma_T(2,1)$. The right panel
shows the switching regions $\Gamma_{s}(a,b)$ (namely $\Gamma_{s}(1,2)$ is below the lower
curve and $\Gamma_{s}(2,1)$ is above the higher curve) as a function of time to maturity $s$.
\label{fig:two-state}}
\end{figure}

\subsection{Policy Making Example.}\label{sec:fed-target}
%To illustrate the above paradigm, we consider a problem in technology adoption.
The Federal Reserve Board (the Fed) has the task of adjusting the US monetary policy in
response to economic events. The Fed has authority over the overnight interest rates and
attempts to implement a loose monetary policy when the economy is weak, and a tight monetary
policy when the economy is overheating. Unfortunately, the current state of the economy $M$ is
never precisely known; thus the main task of the Fed is to estimate $M$ from various economic
information it collects. When the beliefs of the Fed change sufficiently, it will adjust its
monetary policy $\xi$. Such adjustments are expensive, since they are closely followed by
market participants and send out important signals to economic agents. Thus, beyond trying to
track $M$, the Fed also seeks \emph{stability} in its policies, in order not to disrupt
planning activities of businesses.

As can be seen from this description, this problem fits well into our tracking paradigm of
\eqref{def:U}. For concreteness, let $M = \{ M_t \}_{ t \ge 0}$ represent the current economy
with state space $E = \{1, 2, 3\} \equiv \{ Overheating, Growth, Recession \}$. The generator
of $M$ is taken to be $$ Q =
\begin{pmatrix} -4 & 3 & 1 \\  2 & -4 &  2 \\  0 &  3 & -3 \end{pmatrix}.$$ Thus, $M$
moves randomly between all three states (and we assumed that a recession cannot be immediately
followed by overheating). In the face of these three states, the Fed also has three policy
levels, namely its action set is $\A = \{0, 1, 2\} = \{ Tight, Normal, Accommodating \}$.

The cost function $C(\vp,a) = \sum_{i\in E} c_{i}(a) \pi_i$, is given by the matrix
$$ c_{i}(a) = c_{i,a} = \begin{bmatrix} 2 &  -1 &  -1 \\ 0 &  2 & 0 \\  -1 &  -1 & 0
\end{bmatrix}, \qquad \qquad i\in E, a \in \A.$$
The switching costs are given by $K(a,b) = 0.05 \cdot 1_{\{a\neq b\}}$ for $a,b \in \A$. The
observation process $X$ is a simple Poisson process with  $M$-modulated intensity
$\vec{\lambda} = [ \lambda_1, \lambda_2 , \lambda_3] = [1, 2, 5]$. Thus, the worse the economy
state, the more frequent are (negative) events observed by the Fed.

\begin{figure}%[ht]\hspace{-1.5in}
\begin{tabular*}{\textwidth}{lcr}
\begin{minipage}{2.1in}
%%\centering{\includegraphics[height=2.4in,width=2.1in,clip]{techAdopt1-0.png}}
\centering{\includegraphics[height=2.4in,width=2.1in,clip]{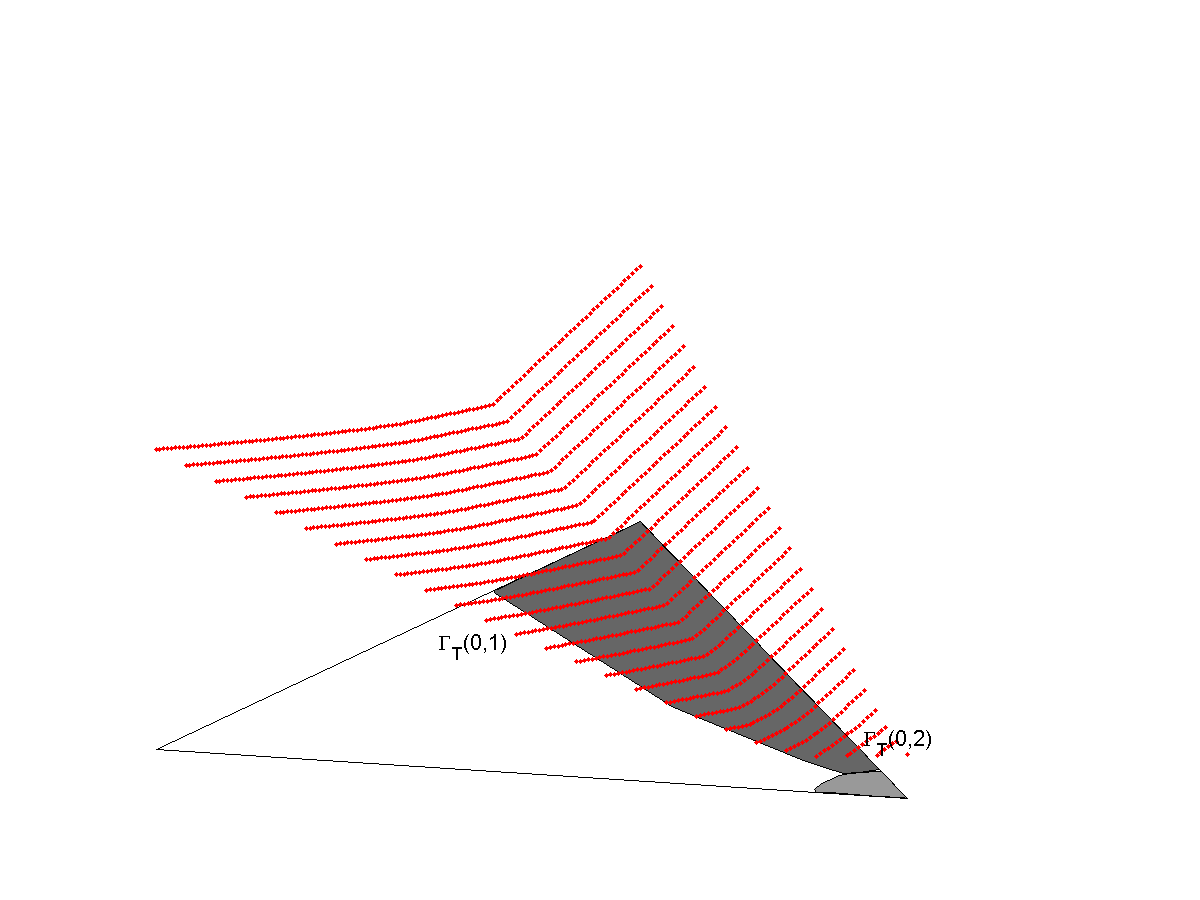} $a = 0$}

\end{minipage}
\begin{minipage}{2.1in}
%\centering{\includegraphics[height=2.4in,width=2.1in,clip]{techAdopt0-25.png}}
\centering{\includegraphics[height=2.4in,width=2.1in,clip]{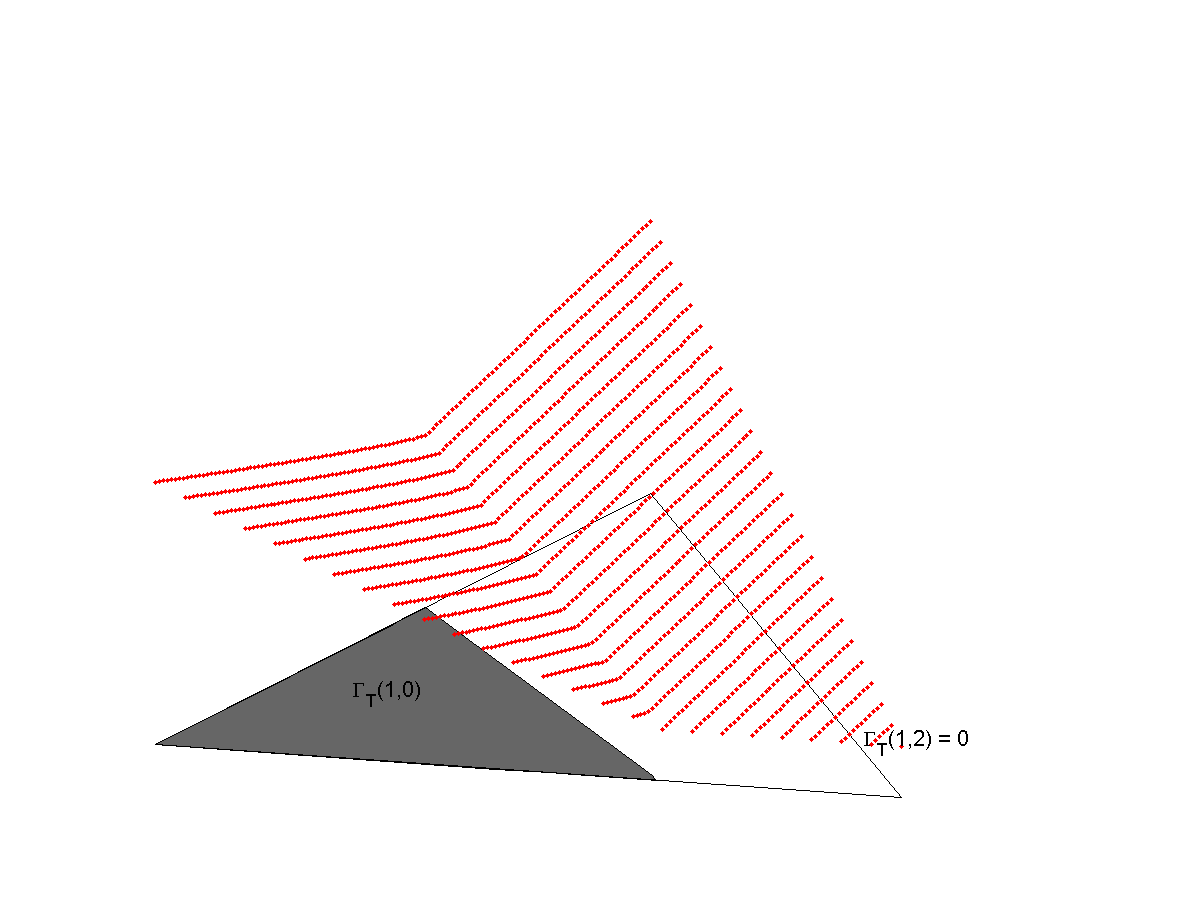} $a = 1$}

\end{minipage}
\begin{minipage}{2.1in}
%\centering{\includegraphics[height=2.4in,width=2.1in,clip]{techAdopt005.png}}
\centering{\includegraphics[height=2.4in,width=2.1in,clip]{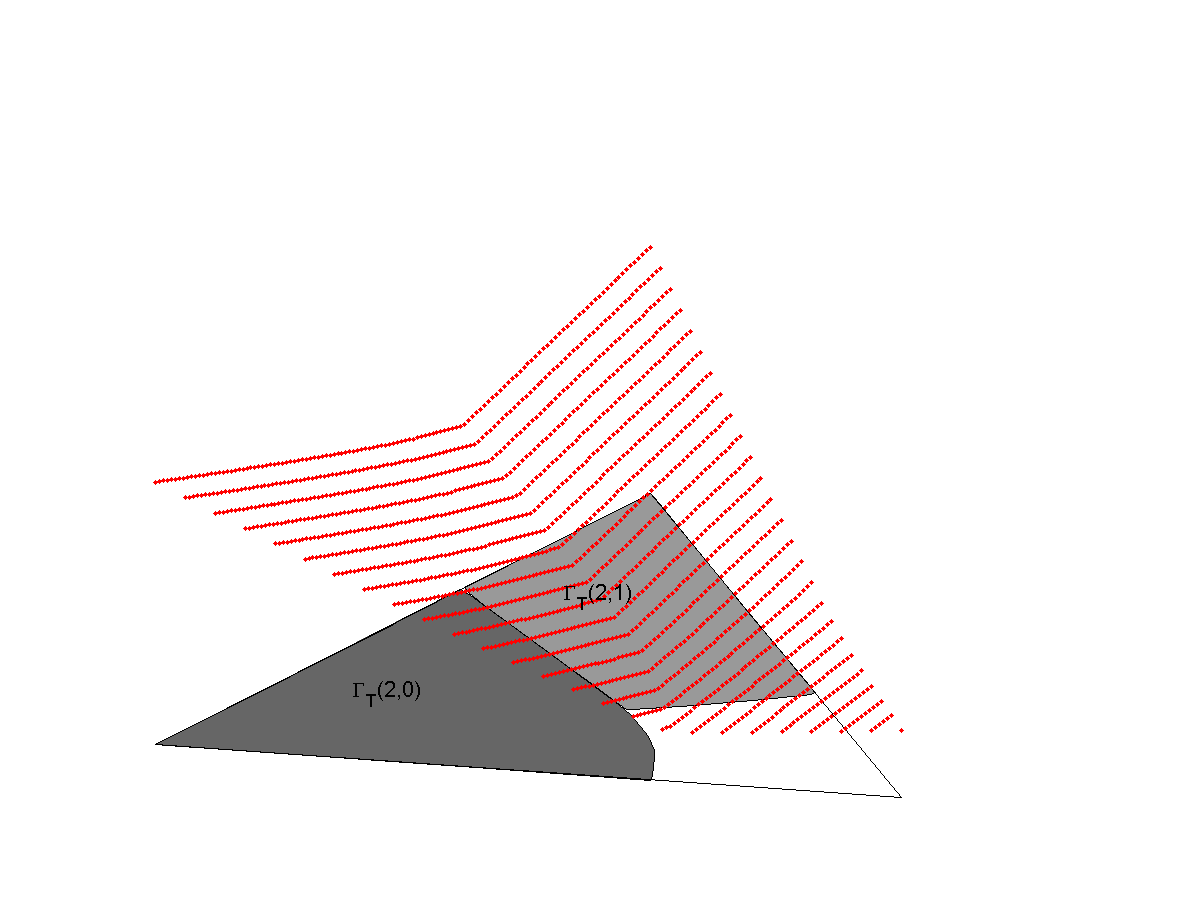} $a = 2$}

\end{minipage}
\end{tabular*}
\caption[Fed Example]{Value function $U(T,\vp,a)$ of the Fed policy-making example in Section
\ref{sec:fed-target} plotted together with the switching regions $\Gamma_T(a,\cdot)$ for each
current policy $a$.\label{fig:fed-target}}
\end{figure}

Figure \ref{fig:fed-target} illustrates the obtained results for $T=4$ and no discounting. The
triangular regions in Figure \ref{fig:fed-target} are the state space $D = \{ \vp \in \R_+^3
:\, \pi_{Ovr} + \pi_{Gro} + \pi_{Rec} =1\} $. The respective panels show how the initial
switching regions $\Gamma_{T}(a, \cdot)$ and value functions $U(T,\vp,a)$ depend on the current
policy $a$. Observe that because the penalty for not tracking recessions is small, starting out
in the `Normal' regime, the Fed will never immediately adopt an `Accommodating' policy,
$\Gamma_T(1,2)=\emptyset$. Similarly, because the penalty for missing an overheating economy is
very large, the switching regions into a `Tight' policy are large and conversely, the
continuation region $\mathcal{C}_T(0)$ is large. Also, observe that the value function appears
to be not differentiable at the boundaries. Finally, we stress that because of the final
horizon, this problem is again non-time-stationary and the solution (as well as $\Gamma_{T}(a,
\cdot)$) depends on remaining time $T$.

\subsection{Customer Call Center Example}\label{sec:customer-call}
Our last example illustrates the structure of the infinite horizon version together with a
different cost structure. We consider a call center application that employs a variable number
of servers to answer calls. The calling rate fluctuates and is modulated by the unknown
environment variable $M$. Having more servers decreases the per-call costs, but increases fixed
costs related to payroll overhead.

We assume that $M_t \in E = \{ Low, Med, High \}$ with a generator $$ Q =
\begin{pmatrix} - 1 & 1 & 0 \\ 1 & -2 & 1 \\ 0 & 1 & -1 \end{pmatrix}.$$
The observed process $X$ represents the actual received calls and is taken to be a compound
Poisson process with intensity $\lambda(M_t)$ and marks $Y_1, Y_2, \ldots$ that represent
intrinsic call costs. Suppose that $Y \in \{ 6, 12, 24\}$, and the distribution of $Y$ and
$\lambda$ is $M$-modulated:
$$ \nu_{i,j} = \PP\{ Y=y_j | M_t = i\} = \begin{pmatrix} 1/4 & 1/2 & 1/4 \\ 1/3 & 1/3 & 1/3 \\
1/4 & 1/4 & 1/2 \end{pmatrix}; \qquad \vec{\lambda} = [1 \quad 3 \quad 4].$$  Thus, as the
manager receives calls, she dynamically updates her beliefs about current state of $M$ based on
the intervals between call times and observed call types.

The call center manager can choose one of two strategies, namely she can employ either one or
two agents, $a \in \A = \{1, 2\}$. Employing $a$ agents leads to per-call costs of $c_1(Y, a) =
-Y/a$ and to continuously-assessed costs of $c_2(Y,a) =- (10 + 20a)$. Thus, when $\P\{M_t =
High\}$ is sufficiently high, it is optimal to employ both agents, otherwise one is sufficient.
Finally, switching costs for increasing or decreasing number of agents are set at $K(a, b) =
2$. Note that here all the costs are independent of $M$ (and hence of $\vP$).

We consider an infinite horizon formulation and take $\rho=0.5$. The parameter $\rho$ measures
the trade-off between minimizing immediate costs and having a long-term strategy that takes
into account future changes in $M$. Thus $\rho=0.5$ means that the horizon of the controller is
on the time-scale of two time periods. The overall objective is:
\begin{align*}
\sup_{\xi \in \U(\infty)} \E^{\vp,a} \left[ \sum_{j=1}^{\infty} \e^{-\rho \sigma_j}
c_1(Y_j,\xi_{\sigma_j}) + \int_0^\infty \e^{-\rho t}c_2(\xi_t) \,dt - \sum_k \e^{- \rho \tau_k}
 K(\xi_{k-1}, \xi_{k})  \right].
\end{align*}
Figure \ref{fig:call-center} shows the results, as well as a computed color-coded sample path
of $\vP$ which shows the implemented optimal strategy. The given path has four jumps and three
policy changes (two changes occur between jumps when $\vP$ enters $\Gamma(1,2)$, and one change
occurs at an arrival when $\vP$ jumps back into $\Gamma(2,1)$). Observe that in the absence of
new information, $\vP$ converges to the fixed point $\vp_\infty = [0.7, 0.23, 0.07]$ (the
invariant distribution of $\e^{Q - \Lambda}$), as can be seen from the flow of the paths in
Figure \ref{fig:call-center}.

% Do 3/2 ie just two policies or 2/3?
\begin{figure}[ht]
%\begin{tabular*}{\textwidth}{lr}
%\begin{minipage}{3.2in}
%\center{\includegraphics[height=2.4in,width=3.2in]{tmpbl-cc.png}}
%
%\end{minipage}
%\begin{minipage}{3.2in}
\center{\includegraphics[height=3in,width=4in]{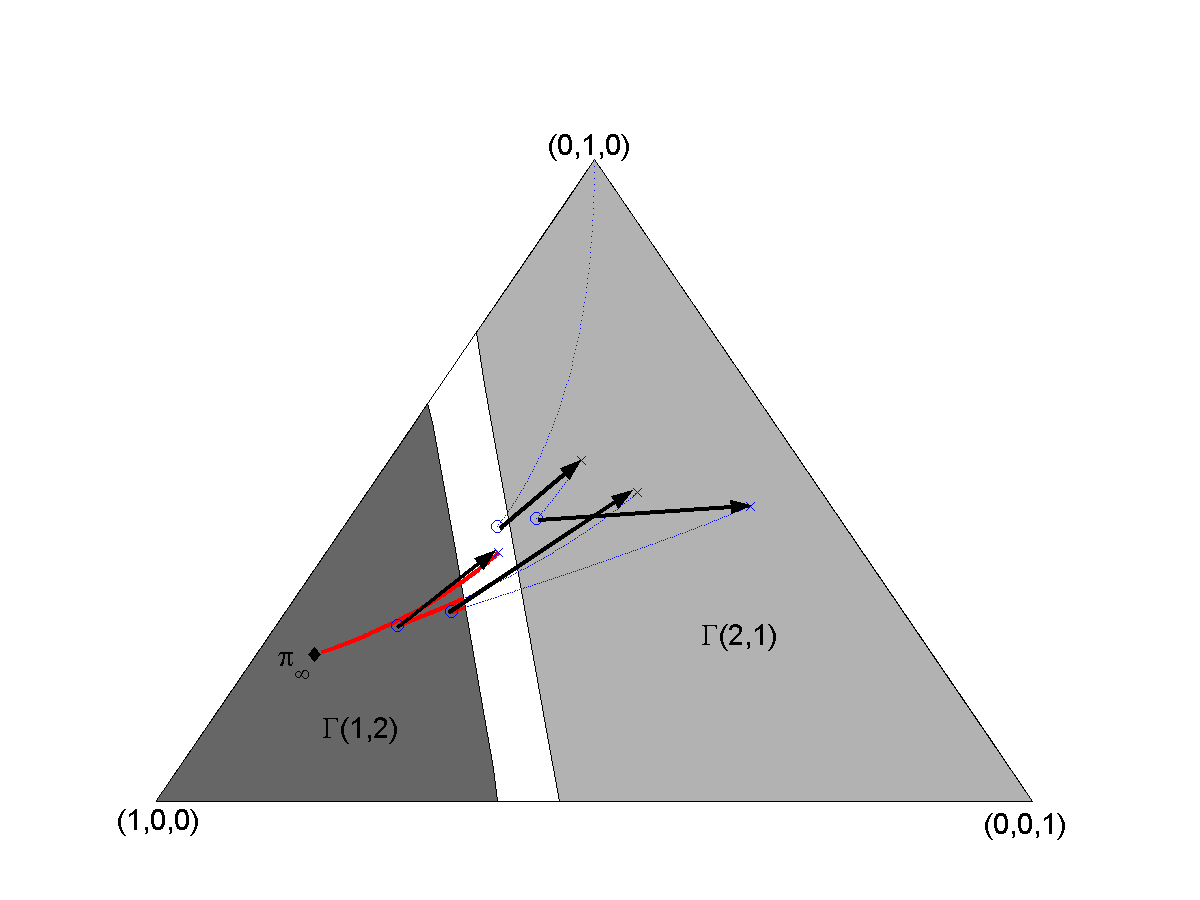}}
%
%\end{minipage}
%\end{tabular*}
\caption[Call Center]{Tracking the regime of a customer call center. We show a sample path of
$\vP$ inside the simplex $D = \{(\pi_1, \pi_2, \pi_3) : \pi_i \ge 0, \pi_1 + \pi_2 + \pi_3 = 1
\}$, as well as the corresponding optimal strategy. The initial state is $\vP_0 = (0,1,0)$ and
$\xi_0 = 1$. On this path we have $t\in[0,4]$ and the arrival pairs (corresponding to jumps of
$X$, recall $\vP$-dynamics in \eqref{eq:jumps-of-vP}) $(\sigma_\ell,Y_\ell)$ for $\ell=1,2,3,4$
are $(0.51, 2), (0.66, 3), (1.44, 1), (2.23, 2)$, respectively. The resulting optimal strategy
$\xi^*$ is color-coded: dashed line for $\xi^*_t = 1$, solid line for $\xi^*_t =
2$.\label{fig:call-center}}
\end{figure}

\appendix
\renewcommand{\thesection}{A}
\refstepcounter{section}
\makeatletter
\renewcommand{\theequation}{\thesection.\@arabic\c@equation}
\makeatother

\bibliography{tracking-references}
\bibliographystyle{abbrvnat}
\end{document}